\theoremstyle{plain}
\newtheorem{theorem}{Theorem}[section]
\newtheorem{lemma}[theorem]{Lemma}
\newtheorem{corollary}[theorem]{Corollary}
\newtheorem{proposition}[theorem]{Proposition}
\newtheorem*{theorem*}{Theorem}
\newtheorem*{problem*}{Problem}
\theoremstyle{definition}
\newtheorem{definition}[theorem]{Definition}
\newtheorem*{claim*}{Claim}
\newtheorem*{remark}{Remark}
\newtheorem*{remarks}{Remarks}
\renewcommand{\P}{{\mathbb P}}
\newcommand{\e}{\mathrm{e}}
\newcommand{\tM}{{\widetilde M}}
\newcommand{\R}{{\mathbb R}}
\newcommand{\D}{{\mathbb D}}
\newcommand{\T}{{\mathbb T}}
\newcommand{\st}{{\text{\rm st}}}
\newcommand{\un}{{\text{\rm un}}}
\newcommand{\er}{{\text{\rm er}}}
\newcommand{\one}{\mathbf{1}}
\newcommand{\norm}[1]{\left\Vert #1\right\Vert}
\newcommand{\nnorm}[1]{\lvert\!|\!| #1|\!|\!\rvert}
\newcommand{\lip}{{\text{\rm Lip}}}
\begin{document}

\begin{frontmatter}[classification=text]

\title{Ergodicity of the Liouville System Implies the Chowla Conjecture} 


\author[Nikos]{Nikos Frantzikinakis}

\begin{abstract}
The Chowla conjecture asserts that the values of the Liouville function form a normal sequence of  plus and minus ones. Reinterpreted in the language of ergodic theory it asserts that the Liouville function is generic for the Bernoulli measure on the space of  sequences with values plus or minus one. We show that these statements are implied by the much weaker hypothesis that the Liouville function is generic for an ergodic measure. We also give variants of this result related to a conjecture of Elliott on correlations of  multiplicative functions with values on the unit circle.  Our argument has an ergodic flavor and combines recent results in analytic number theory,  finitistic and infinitary decomposition results involving uniformity seminorms, and qualitative equidistribution results on nilmanifolds.
\end{abstract}
\end{frontmatter}


 \section{Introduction and main results}
 \subsection{Introduction}
 Let $\lambda\colon \mathbb{N}\to \{-1,1\}$ be  the Liouville function which  is defined to be $1$ on integers with an even number of prime factors, counted with  multiplicity, and $-1$ elsewhere. It is generally believed
 that  the values  of the Liouville function enjoy various  randomness properties and one manifestation of this principle
 is an old conjecture of Chowla
 \cite{Ch65} which  asserts that for all $\ell\in \mathbb{N}$ and all  distinct  $n_1,\ldots, n_\ell\in \mathbb{N}$ we have
 \begin{equation}\label{E:Chowla}
 	\lim_{M\to \infty} \frac{1}{M}\sum_{m=1}^M \lambda(m+n_1)\cdots \lambda(m+n_\ell)=0.
 \end{equation}
 The conjecture is known  to be true only for $\ell=1$;  this case is elementarily equivalent to the prime number theorem.
 For $\ell=2$ and for all odd values of $\ell\in \mathbb{N}$,  a  variant involving logarithmic averages
 was recently established by Tao  \cite{Tao15} and   Tao, Ter\"av\"ainen \cite{TT17} respectively, and  an averaged form of the Chowla conjecture
 was established by
 Matom\"aki, Radziwi{\l}{\l},  and Tao
 \cite{MRT15} using a recent breakthrough of Matom\"aki and Radziwi{\l}{\l} \cite{MR15} concerning  averages of bounded multiplicative functions on typical short intervals. For all $\ell \geq 2$ the conjecture remains open for Ces\`aro averages and for all even $\ell\geq 4$ it  remains open for logarithmic averages.
 It is a consequence of the previous results   that all size three sign patterns are taken by  consecutive values of $\lambda$  with positive lower density  \cite{MRT15b}  (and in fact with logarithmic density $1/8$ \cite{TT17})
 and  all size four sign patterns are taken  with positive lower density \cite{TT17}. Similar results are not known for
 patterns of longer size  and in fact out of the $2^\ell$ possible size $\ell$ sign patterns
 only $\ell+5$ of them are known to be taken by consecutive values of $\lambda$  with positive lower density  \cite{MRT15b} (the Chowla conjecture predicts that all $2^\ell$ patterns are taken and each one  with density $2^{-\ell}$).

 We can   reinterpret the Chowla conjecture  in the language of ergodic theory, hoping that  this offers some appreciable advantage (a point of view also taken for example in \cite{AKLR14, Sa10}). Assuming for the moment that the limit on the left hand side of \eqref{E:Chowla} exists for all $\ell\in \mathbb{N}$ and  $ n_1,\ldots,n_\ell\in \mathbb{N}$,
 we introduce in a natural way
 a dynamical  system (see Proposition~\ref{P:correspondence}),  which we call  the ``Liouville system''. The Chowla conjecture  implies that  this  system is a Bernoulli system, but
 up to now, randomness properties of the Liouville system that are much weaker than independence remain elusive. For instance, it is not known whether this system is of positive entropy,  weakly mixing,  or even ergodic. We prove that the much weaker hypothesis of ergodicity implies that the Liouville system is Bernoulli and implies the Chowla conjecture. This can be stated informally  as follows (see Section~\ref{SS:Chowla} for the formal statements of our main results):
 \begin{theorem*}
 	If the Liouville system is ergodic, then the Chowla conjecture is satisfied.
 \end{theorem*}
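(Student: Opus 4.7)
The plan is to translate the Chowla conjecture, via the correspondence principle, into the assertion that the coordinate function in the Liouville system has vanishing Host--Kra uniformity seminorms, and then to establish this vanishing by combining an inverse theorem for the Gowers norms with the known orthogonality of $\lambda$ to nilsequences. First, I would pass to a subsequence $(M_k)$ along which the averages $\frac{1}{M_k}\sum_{m=1}^{M_k}\lambda(m+n_1)\cdots\lambda(m+n_\ell)$ converge for every $\ell$ and every tuple $(n_1,\ldots,n_\ell)$; these limits are realized as multiple correlations $\int f_0(T^{n_1}x)\cdots f_0(T^{n_\ell}x)\,d\mu$ in the associated Liouville system $(X,T,\mu)$, where $f_0(x)=x(0)$. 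The Chowla conjecture then becomes the vanishing of every such integral with distinct shifts, to be proved under the hypothesis that $(X,T,\mu)$ is ergodic.

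Second, I would reduce the problem to a uniformity-seminorm estimate. On an ergodic system, generalized-von-Neumann-type inequalities control multiple shift correlations of $f_0$ by the Host--Kra seminorms $\nnorm{f_0}_s$ for $s$ large enough in terms of $\ell$, so it suffices to show $\nnorm{f_0}_s=0$ for every $s\geq 2$. The critical maneuver is to identify this soft dynamical seminorm with a limit of the finitistic Gowers uniformity norms $\|\lambda\|_{U^s[M_k]}$ along the chosen subsequence. In general only one-sided bounds are automatic, and bridging the two pictures requires an inverse-theorem-driven decomposition of $\lambda$ into a structured nilsequence-like component plus a Gowers-uniform remainder that behaves well under passage to the limit — this is the finitistic/infinitary decomposition theory alluded to in the abstract.

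Third, once the dictionary is in place, I would feed the Gowers norm into the $U^s$-inverse theorem of Green--Tao--Ziegler: if $\|\lambda\|_{U^s[M_k]}$ remained bounded away from zero, then $\lambda$ would have to correlate nontrivially with some $(s{-}1)$-step nilsequence on $[M_k]$. The orthogonality of $\lambda$ to nilsequences established by Green--Tao, together with the Matom\"aki--Radziwi{\l}{\l} theorem on averages in short intervals (which provides uniformity needed to handle arbitrary scales), then yields a contradiction. Hence $\nnorm{f_0}_s=0$ for every $s$, and the Chowla correlations vanish.

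The main obstacle is the second step. The inverse theorem produces a nilsequence whose complexity may grow with $M_k$, so bare orthogonality of $\lambda$ with any fixed nilsequence is insufficient; one needs a mechanism that upgrades this to uniform orthogonality across the whole family of nilmanifolds produced. Ergodicity of the Liouville system is exactly the hypothesis that rules out the pathological scenario in which the structured part of $\lambda$ oscillates across scales fast enough to evade fixed-parameter orthogonality, and the remaining work — carried out by the qualitative equidistribution theory on nilmanifolds — converts this orthogonality into the required vanishing of $\nnorm{f_0}_s$.
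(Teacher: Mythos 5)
There are two genuine gaps, and together they make the proposed reduction collapse.

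First, the purported identification of the Host--Kra seminorm $\nnorm{f_0}_s$ with a limit of the finitistic Gowers norms $\lim_k\norm{\lambda}_{U^s[M_k]}$ is false. In the Liouville system one has $\nnorm{f_0}_s = \norm{\lambda}_{U^s(\mathbf{I})}$, the \emph{local} uniformity seminorm, and this is strictly stronger than the finitistic Gowers uniformity $\lim_{N\to\infty}\norm{\lambda_N}_{U^s(\mathbb{Z}_N)}=0$. The paper notes that the ergodic sequence $b(n)=\sum_{k}(-1)^{n+k}\one_{[k^2,(k+1)^2)}(n)$ satisfies $\lim_N\norm{b_N}_{U^s(\mathbb{Z}_N)}=0$ for every $s$ and yet $\norm{b}_{U^2(\mathbf{I})}=1$, so ergodicity alone does not bridge the two quantities. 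Since $\lim_N\norm{\lambda_N}_{U^s(\mathbb{Z}_N)}=0$ is already an unconditional theorem of Green--Tao--Ziegler, your dictionary, if it worked, would deliver Chowla without any ergodicity hypothesis at all. What is actually required is the inverse theorem for the local seminorm (the paper's Theorem 4.1), whose proof combines an infinitary decomposition via the Host--Kra structure theorem with a finitary decomposition via Green--Tao--Ziegler, applied to a shifted correlation sequence, and culminates in an orthogonality statement involving a supremum over nilsequences of bounded complexity on short intervals. That orthogonality is established via the K\'atai criterion, which uses the multiplicativity of $\lambda$, and a purely dynamical equidistribution argument; Matom\"aki--Radziwi{\l}{\l}--Tao enters only in the Abelian (one-step) base case, not as a uniform-across-scales device.

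Second, the assertion that $\nnorm{f_0}_s=0$ for every $s$ implies the Chowla correlations vanish by a ``generalized von Neumann inequality'' is not correct. Host--Kra seminorms control \emph{averaged} multiple correlations $\lim_N\mathbb{E}_{n\in[N]}\int\prod_j T^{c_jn}F_j\,d\mu$; they do not bound an individual correlation $\int T^{n_1}f_0\cdots T^{n_\ell}f_0\,d\mu$ at a fixed tuple of distinct shifts. A weakly mixing system with $\nnorm{f_0}_s=0$ for all $s$ need not have $\int f_0\cdot Tf_0\,d\mu=0$. Deducing Chowla from local uniformity is the other main ingredient of the paper, and it is achieved by Tao's theorem (the entropy decrement argument together with the Gowers uniformity of the $W$-tricked von Mangoldt function), which relies essentially on the multiplicativity of $\lambda$, is what forces the passage to logarithmic averages, and cannot be replaced by a soft ergodic-theoretic inequality.
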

 Thinking of $\lambda$ as a point on the sequence space $\{-1,1\}^\mathbb{N}$, we can  reformulate this result using notation from \cite[Definition~3.4]{Fu81a} as follows:  If the Liouville function is generic for an ergodic measure on the sequence space, then the Chowla conjecture is satisfied.

 An implicit assumption made in these statements is
 that the Liouville function admits correlations for Ces\`aro averages on the integers.
 In Section~\ref{SS:Chowla} we   give results
 that do not depend on such strong hypothesis; we work with sequences of intervals with left end equal to $1$ along which the Liouville function admits correlations for logarithmic averages (such sequences are guaranteed  to exist),  and  Theorem~\ref{T:ErgodicChowla}  states that ergodicity of the corresponding measure preserving system implies that the Chowla conjecture holds for logarithmic averages on the same sequence of intervals. We deduce in Corollary~\ref{C:ErgodicLogChowla}  ergodicity conditions which imply that the Chowla conjecture holds  for logarithmic averages taken on the full sequence of integers.

 Three main ingredients enter the proof of Theorem~\ref{T:ErgodicChowla}:
 \begin{enumerate}
 	\item A recent result of Tao  (see Theorem~\ref{T:Tao}) enables to reduce the Chowla conjecture for logarithmic averages to  a  local uniformity property of the Liouville function (this is the only reason why some of our statements involve logarithmic averages). Our goal then becomes to prove this
 	uniformity property (stated in Theorem~\ref{T:uniformity}).
 	
 	\item An inverse theorem for local  uniformity seminorms, which takes
 	a particularly useful form for ergodic sequences (see Theorem~\ref{T:ergodicInverse}). In order to prove it we use  both infinitary
 	and finitary decomposition results (see Propositions~\ref{P:InfiniteDec} and \ref{P:FiniteDec}).
 	The former is proved via
 	an ergodic inverse theorem of Host and Kra \cite{HK05a},
 	and the latter  via
 	a  finitistic  inverse theorem of Green, Tao, and Ziegler \cite{GTZ12c}.  The ergodicity of the sequence is essential; without this assumption we are led to  conditions that we are unable to verify for the Liouville function.

 	\item An asymptotic orthogonality property of the Liouville function with nilsequences taken on  typical short intervals (see Proposition~\ref{P:Discorrelation'}); this is needed in order to  verify that the aforementioned inverse theorem is applicable to the Liouville function. For Abelian nilsequences the orthogonality  property
 	follows from recent work of
 	Matom\"aki, Radziwi{\l}{\l},  and Tao
 	(see Proposition~\ref{P:stronaperiodic}). For general nilsequences additional tools are needed;
 	the heart of the argument is a result of purely dynamical context
 	(see Proposition~\ref{P:orthogonality}) and the only extra  number-theoretic input needed is the orthogonality criterion of Lemma~\ref{L:katai}.
 \end{enumerate}
 Our argument also works for the M\"obius function;
 hence, ergodicity of the M\"obius function  implies  a related Chowla-type result, and as a consequence, it also implies a conjecture of Sarnak~\cite{Sa10,Sa12}
 stating that the M\"obius function is uncorrelated with any bounded deterministic sequence.
 Moreover,  our argument shows that
 every ergodic strongly aperiodic
 multiplicative function (see Definition~\ref{D:uniformly})
 is locally uniform (see Theorem~\ref{T:uniformity}). This last property implies  an Elliott-type result for this class of  multiplicative functions (see Theorem~\ref{T:ErgodicElliott}) which  in turn implies  non-correlation with any bounded deterministic sequence.

\subsection{Main results} In this subsection we give the precise statements of our main results, modulo notation that appears in the next section.
We let $[N]=\{1,\ldots,N\}$.
\subsubsection{Ergodicity and Chowla's conjecture}\label{SS:Chowla}
Our main result is the following  (the notions used are explained in Section~\ref{S:notation}):
\begin{theorem}\label{T:ErgodicChowla}
	Let $N_k\to\infty$ be a sequence of integers and $\mathbf{I}=([N_k])_{k\in\mathbb{N}}$.
	If the Liouville or the M\"obius function is ergodic  for logarithmic averages on $\mathbf{I}$,
	then it satisfies the  Chowla conjecture for logarithmic averages on $\mathbf{I}$.
\end{theorem}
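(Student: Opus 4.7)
The plan is to proceed by contrapositive through the three-pronged strategy outlined in the introduction. Fix $f \in \{\lambda,\mu\}$ and assume $f$ is ergodic for logarithmic averages along $\mathbf{I}$. First I would invoke Tao's theorem (Theorem~\ref{T:Tao}) to reduce the logarithmic Chowla conjecture for $f$ along $\mathbf{I}$ to a local uniformity property of $f$ on typical short intervals (Theorem~\ref{T:uniformity}), namely that $f$ has vanishing local Gowers $U^s$-seminorms of every order, averaged logarithmically along $\mathbf{I}$ over windows of scale $H\to\infty$. My remaining task is to establish this local uniformity under the ergodicity hypothesis.

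For this I would argue by contradiction, assuming the seminorm is bounded below on a positive-density family of short windows, and then apply the ergodic inverse theorem (Theorem~\ref{T:ergodicInverse}) to extract correlations between $f$ on those windows and nilsequences of bounded complexity. This inverse theorem rests on two decompositions: Proposition~\ref{P:InfiniteDec}, which uses the Host--Kra structure theorem to identify the structured component of the Liouville system on its Host--Kra factor as a nilfactor; and Proposition~\ref{P:FiniteDec}, which converts this infinitary statement into a quantitative finitistic one via the Green--Tao--Ziegler inverse theorem for Gowers norms. The ergodicity hypothesis is essential precisely so that the Host--Kra factor is a single nilfactor rather than a measurable field of nilfactors over its ergodic components, as the latter would produce structured objects that currently resist discorrelation.

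To close the contradiction, I would show that $f$ is asymptotically orthogonal to every nilsequence on typical short intervals (Proposition~\ref{P:Discorrelation'}), contradicting the correlation produced by the inverse theorem. The Abelian case reduces to Proposition~\ref{P:stronaperiodic}, a consequence of the Matom\"aki--Radziwi{\l}{\l}--Tao short-interval theorem for bounded multiplicative functions. For general non-Abelian nilsequences I would combine the purely dynamical equidistribution statement of Proposition~\ref{P:orthogonality} with K\'atai's orthogonality criterion (Lemma~\ref{L:katai}), exploiting the multiplicativity of $f$ to reduce to a sum of shifted nilsequences whose joint equidistribution is then analysed on a nilmanifold.

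The main obstacle, I expect, is the short-interval discorrelation with non-Abelian nilsequences: producing the right quantitative equidistribution on a nilmanifold to cancel against K\'atai's criterion, while simultaneously threading the ergodicity hypothesis of the Liouville system through the inverse-theorem machinery so that the structured objects it produces are genuine nilsequences amenable to this discorrelation. The two other steps are, by comparison, importations of external theorems; it is the marriage of nilpotent dynamics with a short-interval multiplicative input, forced upon us by the use of logarithmic rather than Ces\`aro averages, where the real work lies.
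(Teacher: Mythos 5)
Your proposal reproduces the paper's own route faithfully: Theorem~\ref{T:ErgodicChowla} is in the paper literally the conjunction of Theorem~\ref{T:uniformity} (local uniformity from ergodicity plus strong aperiodicity) with Tao's Theorem~\ref{T:Tao}, and your outline of the proof of Theorem~\ref{T:uniformity} — the inverse theorem built from Propositions~\ref{P:InfiniteDec} and~\ref{P:FiniteDec}, closed against the short-interval discorrelation of Proposition~\ref{P:Discorrelation'} via K\'atai's criterion and the dynamical Proposition~\ref{P:orthogonality} — matches the paper's Steps 1--6 in Section 5. One small clarification: the logarithmic averages are imposed only by Tao's entropy-decrement input in Theorem~\ref{T:Tao}; the inverse theorem, the decomposition results, and the nilsequence discorrelation all work equally well for Ces\`aro averages, so the ``real work'' you flag is not actually forced by the choice of averaging but by the non-Abelian nilsequence analysis itself.
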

\begin{remarks}
	$\bullet$ Since for every $\ell\in\mathbb{N}$ each size $\ell$ sign pattern is expected to be taken by consecutive values of $\lambda$,
	we cannot substitute the intervals $[N_k]$ with arbitrary  intervals that do not start at $1$ and have lengths increasing to infinity. The same comment applies to the results of the next subsection.

	$\bullet$ We stress that if we assume ergodicity of the Liouville system
	for Ces\`aro (instead of logarithmic) averages on ${\bf I}$, our argument does not
	allow to deduce that the Chowla conjecture is satisfied for Ces\`aro  averages on ${\bf I}$.
\end{remarks}

Since  for every $a\in \ell^\infty(\mathbb{N})$ and  $\mathbf{I}=([N_k])_{k\in\mathbb{N}}$, $N_k\to\infty$,
there exists a subsequence ${\bf I}'$ of ${\bf I}$ on which the sequence $a$ admits correlations,
we deduce from Theorem~\ref{T:ErgodicChowla} the following:
\begin{corollary}\label{C:ErgodicLogChowla}
	Suppose that whenever the Liouville (or the M\"obius) function admits correlations for logarithmic averages on a sequence of intervals $\mathbf{I}=([N_k])_{k\in\mathbb{N}}$, $N_k\to\infty$,
	the induced measure preserving system is ergodic.
	Then the Liouville (resp. the M\"obius) function  satisfies the
	Chowla conjecture for logarithmic averages on $([N])_{N\in \mathbb{N}}$.\footnote{It  then follows from  a recent result in \cite{GKL17} that there exists $M_k\to\infty$ such that $\lim_{k\to \infty} \mathbb{E}_{m\in [M_k]} \lambda(m+n_1)\cdots \lambda(m+n_\ell)=0$ for all $\ell\in \mathbb{N}$ and distinct $n_1,\ldots,n_\ell\in\mathbb{N}$. }
\end{corollary}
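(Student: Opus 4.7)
The plan is a short contradiction-plus-extraction argument that reduces everything to Theorem~\ref{T:ErgodicChowla}. Suppose, toward a contradiction, that the conclusion fails for the Liouville function (the M\"obius case is identical). Then there exist $\ell\in\mathbb{N}$, distinct $n_1,\ldots,n_\ell\in\mathbb{N}$, $\delta>0$, and an increasing sequence $N_k\to\infty$ along which
$$
\left|\frac{1}{\log N_k}\sum_{m=1}^{N_k}\frac{\lambda(m+n_1)\cdots\lambda(m+n_\ell)}{m}\right|\geq \delta
$$
for every $k$.

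Next I would invoke the compactness fact recalled in the sentence preceding the corollary: for any bounded sequence there is a subsequence of any prescribed $([N_k])$ along which all logarithmic correlations exist. Applied via a standard diagonal extraction over the countable index set of tuples $(\ell,n_1,\ldots,n_\ell)$, this lets me pass to a further subsequence, still denoted $(N_k)$, on which $\lambda$ admits correlations for logarithmic averages in the full sense required by Theorem~\ref{T:ErgodicChowla}. The lower bound $\delta$ is preserved by subsequence passage, so it continues to hold.

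Having produced such an $\mathbf{I}=([N_k])_{k\in\mathbb{N}}$, the hypothesis of the corollary directly supplies ergodicity of the induced measure preserving system. Theorem~\ref{T:ErgodicChowla} then yields the Chowla conjecture for logarithmic averages on $\mathbf{I}$; in particular the correlation indexed by $(\ell,n_1,\ldots,n_\ell)$ tends to $0$ along $(N_k)$, contradicting the reserved lower bound $\delta$. No genuine obstacle appears here: the entire content of the corollary lies in Theorem~\ref{T:ErgodicChowla}, and the only points to verify are that (i) the compactness assertion quoted from the preceding paragraph really does extract a subsequence on which \emph{all} correlations (not just one) converge, which is a routine diagonalization, and (ii) the quantitative failure of Chowla survives subsequence extraction, which is automatic since the inequality holds termwise.
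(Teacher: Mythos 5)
Your argument is correct and is essentially the paper's own, which it states in a single line just before the corollary: extract a subsequence of a bad sequence $([N_k])$ on which $\lambda$ admits all logarithmic correlations, apply the ergodicity hypothesis, and cite Theorem~\ref{T:ErgodicChowla}. You have merely unpacked that line into a contradiction argument, and the two points you flag at the end (diagonalization over a countable index set; persistence of the quantitative lower bound along subsequences) are exactly the routine observations the paper leaves implicit.
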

Note that this result does not  impose an assumption of existence of correlations for logarithmic averages of the Liouville function; it rather states that
in order to  verify the Chowla conjecture for logarithmic averages
it suffices to  work under the assumption that correlations exist on some sequence of intervals
 $\mathbf{I}=([N_k])_{k\in\mathbb{N}}$, $N_k\to\infty$,
and then for any such sequence  $\mathbf{I}$ verify that the induced measure preserving system is ergodic.

Since convergence of Ces\`aro averages on $\mathbf{I}=([N])_{N\in \mathbb{N}}$ implies convergence to the same limit of logarithmic averages on $\mathbf{I}$, we deduce the result stated in the introduction:
\begin{corollary}\label{C:ErgodicChowla}
	If the Liouville or the M\"obius function is ergodic for Ces\`aro averages on  $\mathbf{I}=([N])_{N\in \mathbb{N}}$, then it satisfies the Chowla conjecture
	for Ces\`aro averages on  $\mathbf{I}$.
\end{corollary}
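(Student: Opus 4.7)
The plan is to reduce the statement directly to Theorem~\ref{T:ErgodicChowla} by exploiting the classical fact that Ces\`aro convergence of a bounded sequence on $([N])_{N\in\mathbb{N}}$ implies convergence of its logarithmic averages to the same limit; this is a routine Abel summation argument which applies verbatim to any bounded sequence on $\mathbb{N}$.

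Working under the hypothesis, I would apply this observation to each correlation sequence of the form $m\mapsto \lambda(m+n_1)\cdots\lambda(m+n_\ell)$ (and analogously for $\mu$). Ergodicity for Ces\`aro averages on $\mathbf{I}$ entails, by definition, that all such Ces\`aro correlations exist on $\mathbf{I}$; the Abel summation step then promotes this to the existence of all corresponding logarithmic correlations on $\mathbf{I}$, with identical limiting values. Consequently, the measure preserving system produced via the correspondence of Proposition~\ref{P:correspondence} is the same whichever averaging scheme one uses, and in particular the Liouville (respectively M\"obius) function is also ergodic for logarithmic averages on $\mathbf{I}$.

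At this point Theorem~\ref{T:ErgodicChowla} applies and yields that all logarithmic correlations of $\lambda$ (or $\mu$) on $\mathbf{I}$ vanish. Since we have already identified these with the corresponding Ces\`aro correlations in the previous step, the Ces\`aro correlations also vanish, which is exactly the Chowla conjecture for Ces\`aro averages on $\mathbf{I}$. There is no real obstacle in this argument: everything rests on Theorem~\ref{T:ErgodicChowla}, and the only additional ingredient is the elementary Abel summation identity relating the two averaging schemes.
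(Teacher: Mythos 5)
Your proposal is correct and takes essentially the same approach as the paper: the paper's one-line deduction (``Since convergence of Ces\`aro averages on $\mathbf{I}=([N])_{N\in\mathbb{N}}$ implies convergence to the same limit of logarithmic averages on $\mathbf{I}$, we deduce \ldots'') is exactly your Abel-summation reduction, combined with the observation that equal correlations yield the same Furstenberg system, and an application of Theorem~\ref{T:ErgodicChowla}.
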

Further analysis of structural properties of measure preserving systems naturally associated with the Liouville or the M\"obius function appear in the recent article of the author and Host \cite{FH17}. The direction taken in  \cite{FH17} is complementary to the
one in this article and the techniques used very different.

\subsubsection{Ergodicity and Elliott's conjecture}\label{SS:Elliott}
We give a variant of our main result which applies to correlations of arbitrary multiplicative functions with values on the unit circle. This relates to logarithmically averaged variants of  conjectures made by Elliott in  \cite{El90,El94}.
\begin{theorem}\label{T:ErgodicElliott}
	Let $f_1\in \mathcal{M}$ be a strongly aperiodic multiplicative function which is  ergodic
	for  logarithmic  averages on $\mathbf{I}=([N_k])_{k\in\mathbb{N}}$, $N_k\to\infty$. Then for every $s\geq 2$ we have that
	\begin{equation}\label{E:Elliott}
		\mathbb{E}^{\log}_{m\in \mathbf{I}}\,  f_1(m+n_1)\cdots f_s(m+n_s)=0
	\end{equation}
	holds for  all  $f_2,\ldots, f_s\in \mathcal{M}$ and all distinct $n_1,\ldots, n_s\in \mathbb{N}$.
\end{theorem}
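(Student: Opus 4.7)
The proof should be a direct application of Theorem~\ref{T:uniformity} combined with a generalized von Neumann-type estimate. By hypothesis $f_1 \in \mathcal{M}$ is strongly aperiodic and ergodic for logarithmic averages on $\mathbf{I}$; these are precisely the hypotheses of Theorem~\ref{T:uniformity}, so $f_1$ is locally uniform on $\mathbf{I}$. That is, the local Gowers $U^s$-norm of $f_1$ on short intervals of length $H_k$ tends to zero as $k \to \infty$ when logarithmically averaged over $m \in [N_k]$, for any sequence $H_k \to \infty$ growing sufficiently slowly compared to $N_k$.

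The next step is to convert this uniformity of the single factor $f_1$ into the vanishing of the $s$-fold correlation~\eqref{E:Elliott}. First, replace the long logarithmic average by a short-interval average: because all $f_j$ are bounded by $1$ and $H_k = o(N_k)$, one has
\[
\mathbb{E}^{\log}_{m\in[N_k]}\, \prod_{j=1}^s f_j(m+n_j) = \mathbb{E}^{\log}_{m\in[N_k]}\, \mathbb{E}_{h\in[H_k]} \prod_{j=1}^s f_j(m+h+n_j) + o(1).
\]
For each fixed $m$ the inner average is an $s$-fold correlation over the short interval $[m, m+H_k]$; a standard van der Corput/Cauchy--Schwarz scheme applied $s-1$ times majorizes it by the local $U^s$-norm of $f_1$ on a shifted copy of this short interval, the other $f_j$ surviving only as bounded weights. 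Taking the logarithmic average over $m$ and invoking the local uniformity of $f_1$ provided by Theorem~\ref{T:uniformity} then forces~\eqref{E:Elliott}.

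The main obstacle I expect is bookkeeping rather than substance. One must verify that the generalized von Neumann estimate in the short-interval, logarithmic-average framework really controls the correlation by the $U^s$-norm of the single factor $f_1$: the multiplicative structure of $f_2,\ldots,f_s$ and the distinctness of the shifts $n_1,\ldots,n_s$ must be arranged so that after the $s-1$ Cauchy--Schwarz steps it is $f_1$ (and not some unwanted combination) whose Gowers norm appears. Equally important is that every error introduced by passing between the scales $N_k$ and $H_k$ is genuinely $o(1)$; this is where boundedness of the $f_j$'s and the $H_k = o(N_k)$ condition play their role. Once these routine but careful estimates are in place, the argument parallels the derivation of Theorem~\ref{T:ErgodicChowla} from Theorem~\ref{T:uniformity}, and~\eqref{E:Elliott} follows.
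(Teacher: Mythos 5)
Your first step is correct and matches the paper: by Theorem~\ref{T:uniformity}, $\norm{f_1}_{U^s_{\text{log}}(\mathbf{I})}=0$. The rest of the sketch, however, has a genuine gap. A Cauchy--Schwarz cascade does not control
\[
\mathbb{E}^{\log}_{m\in[N_k]}\,\mathbb{E}_{h\in[H_k]}\,\prod_{j=1}^s f_j(m+h+n_j)
\]
by any local Gowers norm of $f_1$: after substituting $k=m+h$, the inner short-interval average is simply the mean of the single fixed sequence $k\mapsto\prod_{j=1}^s f_j(k+n_j)$ over $[m,m+H_k)$. The shifts $n_j$ are constants and every factor carries the same coefficient of both $m$ and $h$, so the generalized von Neumann inequality --- which needs the shifts to range over a parameter with distinct coefficients --- simply does not apply, and there is no Cauchy--Schwarz step that isolates $f_1$ while demoting $f_2,\ldots,f_s$ to bounded weights. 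Already for $s=2$, if one drops the hypothesis $f_2\in\mathcal{M}$ and takes $f_2(n)=f_1(n+n_1-n_2)$ for large $n$, the correlation equals $1$ even though $f_1$ is locally uniform; so no von Neumann estimate treating $f_2$ as a ``bounded weight'' can possibly exist.

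What you are missing is the multiplicativity argument supplied by the paper's Theorem~\ref{T:Tao'}. Since every $f_j$ is in $\mathcal{M}$, one has $f_j(p(m+n_j))=f_j(p)\,f_j(m+n_j)$ except when $p\mid(m+n_j)$, and averaging over primes $p$ (Proposition~\ref{P:Tao}, the entropy-decrement identity, together with Corollary~\ref{C:Tao}) replaces the fixed-shift correlation by
\[
\mathbb{E}_{p\in\P}\,c_p\,\Big(\mathbb{E}^{\log}_{m\in\mathbf{I}}\,\prod_{j=1}^s f_j(m+pn_j)\Big),\qquad c_p=\prod_{j=1}^s\overline{f_j(p)}.
\]
Now the shifts $pn_j$ vary linearly in $p$ with the distinct coefficients $n_j$; after a Furstenberg correspondence the right-hand side is controlled by Proposition~\ref{P:ergodic}, a multiple ergodic theorem along primes resting on the Gowers uniformity of the $W$-tricked von Mangoldt function and on Leibman's generalized von Neumann theorem, which shows it vanishes once $\nnorm{F_1}_s=0$, equivalently $\norm{f_1}_{U^s_{\text{log}}(\mathbf{I})}=0$. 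So the correct structure of the proof is Theorem~\ref{T:uniformity} followed by Theorem~\ref{T:Tao'}; the second half is not bookkeeping but uses the multiplicativity of all $s$ functions and a genuine number-theoretic input on the primes.
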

Elliott conjectured that the conclusion holds for  Ces\`aro averages without the ergodicity assumption and  under the weaker assumption of aperiodicity (which coincides with strong aperiodicity for real valued multiplicative functions), but in
\cite[Theorem~B.1]{MRT15} it was shown that for complex valued multiplicative functions  a stronger assumption is needed and  strong aperiodicity seems to be the right one.

Specializing the previous result to the case  $f_1=\cdots=f_s=f$ where $f$ is an aperiodic multiplicative function taking values plus or minus one only
(aperiodicity implies  strong aperiodicity in this case) we deduce the following:
\begin{corollary}
	Let $f\colon \mathbb{N}\to \{-1,1\}$ be an aperiodic multiplicative function which admits correlations on   $\mathbf{I}=([N_k])_{k\in\mathbb{N}}$, $N_k\to\infty$, for logarithmic averages. Then the Furstenberg system  induced by $f$
	and   $\mathbf{I}$ for logarithmic averages  is ergodic if and only if it is Bernoulli.
\end{corollary}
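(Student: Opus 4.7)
The plan is as follows. The ``if'' direction is standard: every Bernoulli system is ergodic (indeed mixing of all orders), so no separate argument is needed. For the ``only if'' direction, the idea is to feed Theorem~\ref{T:ErgodicElliott} into the symbolic description of the Furstenberg system and then recognize the resulting moments as those of a symmetric Bernoulli measure. Since $f$ takes values in $\{-1,1\}$, aperiodicity is equivalent to strong aperiodicity (as noted in the paragraph preceding the corollary), so Theorem~\ref{T:ErgodicElliott} applies with $f_1=\cdots=f_s=f$. Under the ergodicity hypothesis we obtain, for every $s\geq 2$ and all distinct $n_1,\ldots,n_s\in\mathbb{N}$,
\[
\mathbb{E}^{\log}_{m\in\mathbf{I}} f(m+n_1)\cdots f(m+n_s)=0.
\]
This is the Chowla-type conclusion for $f$ on $\mathbf{I}$; the $s=1$ analogue $\mathbb{E}^{\log}_{m\in\mathbf{I}} f(m)=0$ holds because aperiodic multiplicative functions have vanishing mean (a standard consequence of Hal\'asz's theorem).

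Next I would translate this into a statement about the Furstenberg measure. Realize the system as $(X,\mu,T)$ with $X=\{-1,1\}^{\mathbb{Z}}$, $T$ the shift, and $F_0(x)=x_0$; by the correspondence used to build the Furstenberg system (cf.~Proposition~\ref{P:correspondence}), the integrals
\[
\int F_0(T^{n_1}x)\cdots F_0(T^{n_\ell}x)\,d\mu(x)
\]
compute precisely the logarithmic limits above and so vanish for all distinct $n_1,\ldots,n_\ell$ and every $\ell\geq 1$. Because $F_0^2\equiv 1$ on $X$, an arbitrary finite monomial $\prod_i F_0(T^{n_i}x)^{a_i}$ reduces to the product over those indices $i$ for which $a_i$ is odd, so its $\mu$-integral equals $0$ when some exponent is odd and $1$ when all exponents are even. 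These are exactly the mixed moments of the symmetric Bernoulli product measure on $\{-1,1\}^{\mathbb{Z}}$. Since cylinder sets generate the Borel $\sigma$-algebra and a finite-dimensional distribution on $\{-1,1\}^k$ is determined by its $\pm 1$-moments, $\mu$ coincides with the Bernoulli measure and $(X,\mu,T)$ is Bernoulli.

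The main nontrivial input is Theorem~\ref{T:ErgodicElliott} itself; once it is invoked, the rest is the algebraic collapse $f^2\equiv 1$ combined with the moment characterization of product measures on $\{-1,1\}^{\mathbb{Z}}$. The one place where care is needed is the bookkeeping for the Furstenberg construction: one should verify that $\mu$ is genuinely realized as a shift-invariant measure on $\{-1,1\}^{\mathbb{Z}}$ whose coordinate functions generate the Borel $\sigma$-algebra and whose mixed moments are exactly the logarithmic Ces\`aro limits of $f$ along $\mathbf{I}$. This is routine from the correspondence principle used throughout the paper, and once it is in place the corollary follows immediately.
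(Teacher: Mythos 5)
Your proof is correct and takes essentially the same route the paper intends: specialize Theorem~\ref{T:ErgodicElliott} to $f_1=\cdots=f_s=f$, use that aperiodicity and strong aperiodicity coincide for real-valued $f\in\mathcal{M}$, translate the resulting Chowla-type identities (plus the $s=1$ case, which follows directly from the definition of aperiodicity) into the vanishing of all mixed moments of distinct-shift coordinate functions under the Furstenberg measure, and invoke $f^2\equiv 1$ together with the Walsh/square-free-monomial characterization of measures on $\{-1,1\}^{\mathbb{Z}}$ to identify $\mu$ with the symmetric Bernoulli measure. The paper states this corollary without a separate proof precisely because the deduction is the one you carried out, so there is nothing to flag.
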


\subsubsection{Ergodicity and local  uniformity}
The key step taken in this article  in order to prove   Theorem~\ref{T:ErgodicChowla}, is to establish local  uniformity for the class of ergodic strongly aperiodic multiplicative functions.
The precise statement is as follows (the notions used are explained in Section~\ref{S:notation}):
\begin{theorem}\label{T:uniformity}
	Let  $f\in \mathcal{M}$ be a strongly aperiodic  multiplicative function which is ergodic
	for Ces\`aro (or logarithmic)  averages on $\mathbf{I}=([N_k])_{k\in\mathbb{N}}$, $N_k\to\infty$.
	Then $\norm{f}_{U^s(\mathbf{I})}=\norm{f}_{U^s_*(\mathbf{I})}=0$  (corr. $\norm{f}_{U^s_{\text{log}}(\mathbf{I})}=\norm{f}_{U^s_{*,\text{log}}(\mathbf{I})}=0$) for every $s\in \mathbb{N}$.
\end{theorem}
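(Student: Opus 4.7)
The plan is a proof by contradiction: assume $\|f\|_{U^s(\mathbf{I})}>0$ (the dual seminorm $\|f\|_{U^s_*(\mathbf{I})}$ is handled analogously) and derive a conflict with asymptotic orthogonality of $f$ to nilsequences. The link between these two quantitative statements is the ergodic inverse theorem (Theorem~\ref{T:ergodicInverse}): under ergodicity of the measure preserving system associated with $f$ along $\mathbf{I}$, a nontrivial local uniformity seminorm for $f$ forces $f$ to correlate nontrivially with some $(s-1)$-step nilsequence $\psi$ on a positive density of short subintervals of the $[N_k]$. So the first step is to apply Theorem~\ref{T:ergodicInverse} to produce such a $\psi$. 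The ergodicity hypothesis enters essentially at this point: it lets the Host--Kra infinitary decomposition (Proposition~\ref{P:InfiniteDec}) place the structured component of $f$ into a single nilfactor of the Furstenberg system, and the Green--Tao--Ziegler finitary decomposition (Proposition~\ref{P:FiniteDec}) then converts this into a short-interval nilsequence correlating with $f$.

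The second step is to rule out this correlation. Here we invoke Proposition~\ref{P:Discorrelation'}, which asserts that any strongly aperiodic $f\in\mathcal{M}$ is asymptotically orthogonal on typical short intervals to every nilsequence of bounded complexity. This directly contradicts the correlation produced by the inverse theorem, forcing $\|f\|_{U^s(\mathbf{I})}=0$. The argument for $\|f\|_{U^s_*(\mathbf{I})}$ is the same, and the logarithmic variants are identical up to replacing Ces\`aro by logarithmic averages throughout (and Ces\`aro ergodicity hypothesis by its logarithmic counterpart).

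The main obstacle is Proposition~\ref{P:Discorrelation'} itself, i.e., verifying that the inverse theorem is actually applicable to $f$. For Abelian nilsequences, the required cancellation of $f$ against characters on typical short intervals is delivered by the Matom\"aki--Radziwi{\l}{\l}--Tao machinery (Proposition~\ref{P:stronaperiodic}), and strong aperiodicity is precisely what yields this uniformly in the character. The nonabelian case is the technical crux: it relies on the purely dynamical Proposition~\ref{P:orthogonality}, which uses qualitative equidistribution on nilmanifolds to reduce correlation with a general nilsequence to input accessible to arithmetic methods, combined with K\'atai's orthogonality criterion (Lemma~\ref{L:katai}) to transfer the estimate from $f$. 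Balancing short-interval scales against nilmanifold complexity while extracting the uniformity afforded by strong aperiodicity is where I expect the bulk of the technical work to lie.
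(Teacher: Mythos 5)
Your high-level ingredients are the right ones (the ergodic inverse theorem, the discorrelation estimate of Proposition~\ref{P:Discorrelation'}, K\'atai's criterion, the Matom\"aki--Radziwi{\l}{\l}--Tao short-interval input), but there is a genuine structural gap in the way you propose to assemble them. The paper's proof of Theorem~\ref{T:uniformity} is an \emph{induction} on $s$, not a single contradiction argument, and the induction is not an expository convenience: it is what makes Proposition~\ref{P:Discorrelation'} actually provable. After Theorem~\ref{T:ergodicInverse} hands you a positive correlation with an $(s-1)$-step nilsequence $\phi$ (together with a supremum over $\Psi_Y$ for some $(s-2)$-step nilmanifold $Y$), the paper splits on whether the vertical nilcharacter $\Phi$ defining $\phi$ is trivial or not. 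In the trivial case $\phi$ is in fact an $(s-2)$-step nilsequence, and the only mechanism the paper has for cancelling it is Lemma~\ref{L:Direct}, whose hypothesis is precisely the already-established bound $\norm{f}_{U^{s-1}(\mathbf{I})}=0$ delivered by the previous stage of the induction. With a flat proof by contradiction at a single $s$ you have no access to this; you are implicitly invoking the conclusion of the theorem one step lower, and nothing in your proposal supplies it. The non-trivial nilcharacter case is what K\'atai's criterion (Lemma~\ref{L:katai}) plus the dynamical Proposition~\ref{P:orthogonality} handle, and Proposition~\ref{P:stronaperiodic} enters only at the very bottom of the induction ($s=1,2$), where the ergodicity identity collapses $\norm{f}_{U^2(\mathbf{I})}^4$ to $\lim_{H}\mathbb{E}_{h\in[H]}\,\big|\mathbb{E}_{n\in\mathbf{I}}\,f(n+h)\,\overline{f(n)}\big|^2$, which MRT kills directly.

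Two further inaccuracies. First, your dichotomy is not ``Abelian versus non-Abelian nilsequence'' and strong aperiodicity is not used to get cancellation ``uniformly in the character'': Proposition~\ref{P:stronaperiodic} is a cancellation statement for $f(n+h)\,\overline{f(n)}$ averaged over $h$ on short intervals, with no quantitative uniformity over Dirichlet or Fourier characters anywhere in the argument; strong aperiodicity is spent once, at the base of the induction. Second, you do not need a ``separate but analogous'' argument for $\norm{f}_{U^s_*(\mathbf{I})}$: Proposition~\ref{E:normequiv} already gives $\norm{f}_{U^s_*(\mathbf{I})}\leq 4\,\norm{f}_{U^s(\mathbf{I})}$, so the starred seminorm vanishes the moment the unstarred one does.
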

\begin{remark}
	It is shown in \cite{FH15a} (and previously in \cite{GT10b,GT12b,GTZ12c} for the M\"obius and the Liouville function) that if $f$ is an aperiodic multiplicative function, then
	for every $s\in \mathbb{N}$ we have $\lim_{N\to\infty}\norm{f_N}_{U^s(\mathbb{Z}_N)}=0$ where $\norm{\cdot}_{U^s(\mathbb{Z}_N)}$ are the Gowers uniformity norms and with $f_N$ we denote the periodic extension of $f\cdot \one_{[N]}$ to $\mathbb{Z}_N$. It should be stressed though, that when $\mathbf{I}=([N])_{N\in\mathbb{N}}$,  the  local uniformity condition $\norm{f}_{U^s(\mathbf{I})}=0$ is strictly stronger  and cannot be inferred from Gowers uniformity for any $s\geq 2$. For example, the (non-ergodic) sequence  $a(n)=\sum_{k=1}^\infty (-1)^k\,  \one_{[k^2,(k+1)^2)}(n)$, $n\in\mathbb{N}$, satisfies    $\lim_{N\to\infty} \norm{a_N}_{U^s(\mathbb{Z}_N)}=0$ for every $s\in \mathbb{N}$ where $a_N$ is defined as above, but $\norm{a}_{U^1(\mathbf{I})}=1$.  Moreover, the sequence
	$b(n)=\sum_{k=1}^\infty (-1)^{n+k}\,  \one_{[k^2,(k+1)^2)}(n)$, $n\in\mathbb{N}$,
	is  ergodic for Ces\`aro averages on ${\bf I}$  and satisfies    $\lim_{N\to\infty} \norm{b_N}_{U^s(\mathbb{Z}_N)}=0$ for every $s\in \mathbb{N}$,
	but  $\norm{b}_{U^2(\mathbf{I})}=1$.
\end{remark}
For a sketch of the proof of Theorem~\ref{T:uniformity}  see Section~\ref{SS:sketch}.

The link between   Theorem~\ref{T:ErgodicChowla} and Theorem~\ref{T:uniformity}
is given by the following result of Tao (it follows
from \cite[Theorem~1.8 and Remarks~1.9, 3.4]{T16}):
\begin{theorem}[Tao~\cite{T16}]\label{T:Tao}
	Let $s\in \mathbb{N}$,   $f$  be the Liouville or the  M\"obius function, and suppose that $f$ admits correlations for logarithmic averages  on $\mathbf{I}=([N_k])_{k\in\mathbb{N}}$, $N_k\to \infty$. If $\norm{f}_{U^{s}_{*,\text{log}}(\mathbf{I})}=0$,
	then $f$  satisfies the logarithmic Chowla  conjecture on $\mathbf{I}$ for correlations involving $s+1$ terms.
\end{theorem}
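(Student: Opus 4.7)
The plan is to follow the strategy of Tao from \cite{T16}, which combines his \emph{entropy decrement argument} with the duality theory of the Gowers uniformity seminorms. Fix distinct $h_0,\ldots,h_s\in\mathbb{N}$; the goal is to show that
\[
C_k(\mathbf{h}) := \mathbb{E}^{\log}_{n\in[N_k]}\, f(n+h_0)\cdots f(n+h_s)\longrightarrow 0
\]
as $k\to\infty$. Since $f$ is assumed to admit logarithmic correlations on $\mathbf{I}$, the limit exists, so it suffices to identify it with zero. The overall scheme is: (i) use complete multiplicativity together with entropy decrement to rewrite $C_k(\mathbf{h})$ as a logarithmic correlation of $f$ against an auxiliary sequence having uniformly bounded local $U^s$ norm, and then (ii) apply the dual uniformity hypothesis to annihilate this correlation.

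Step (i) is carried out as follows. Exploiting $f(pn)=-f(n)$ for primes $p$, together with the divergence of $\sum_p 1/p$, the entropy decrement argument shows that for a positive proportion of primes $p$ in any sufficiently long dyadic window $[P,2P]$ one has the approximate identity
\[
C_k(\mathbf{h}) = (-1)^{s+1}\, C_k(p\mathbf{h}) + o_{k;P}(1).
\]
The role of logarithmic averaging is critical here: the measure $dn/n$ on $[N_k]$ is approximately dilation-invariant (through $\sum_{n\le N/p}1/n=\sum_{n\le N}1/n-\log p+O(1)$), whereas this fails for Ces\`aro averages. Averaging this approximate identity over $p\in[P,2P]$ and performing the change of variables $n\mapsto pn$, one rewrites a multiple of $C_k(\mathbf{h})$ in the shape
\[
\mathbb{E}^{\log}_{n\in[N_k]}\, f(n+h_0)\cdot G_k(n),
\]
where $G_k$ is a bounded sequence built as a weighted prime-averaged product of shifts of $f$. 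A standard multilinear Cauchy--Schwarz argument of generalised von Neumann type then bounds the local Gowers $U^s$ seminorm of $G_k$ uniformly in $k$.

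Step (ii) is immediate from the hypothesis. By the very definition of the dual seminorm, $\norm{f}_{U^s_{*,\text{log}}(\mathbf{I})}=0$ says precisely that $\mathbb{E}^{\log}_{n\in[N_k]} f(n)\,g_k(n)\to 0$ for every sequence $(g_k)$ with uniformly bounded local $U^s$ norm on $\mathbf{I}$. Combined with the representation obtained in step (i), this forces $\lim_k C_k(\mathbf{h})=0$, which is the logarithmic Chowla relation for $(s+1)$-point correlations. I expect the principal obstacle to lie in the entropy decrement of step (i): one must simultaneously track the aggregated entropy loss across the $s+1$ correlation factors, calibrate the dyadic range $[P,2P]$ so that the resulting error terms do not swamp the main term, and quantify the approximate dilation invariance with enough precision for the prime average to close. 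This delicate quantitative balance is also the technical reason the theorem is constrained to logarithmic rather than Ces\`aro averages.
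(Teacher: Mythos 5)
First, note that the paper does not give its own proof of Theorem~\ref{T:Tao}; it is cited from Tao~\cite{T16}, and the remark beneath the statement identifies the two main ingredients of Tao's argument as the entropy decrement argument from \cite{Tao15} and the Gowers uniformity of the $W$-tricked von Mangoldt function from \cite{GT10b,GT12,GTZ12c}. Your sketch reproduces the first ingredient in broad outline but omits the second entirely, and that omission is fatal. Once you have used multiplicativity and approximate dilation-invariance of the logarithmic average to write $C_k(\mathbf{h})\approx (-1)^{s+1}\mathbb{E}_{p\in[P,2P]}\,C_k(p\mathbf{h})$ and then unfold the right-hand side, the averaging parameter $p$ ranges over \emph{primes}, so the object you must handle is a multiple correlation whose dilation weight is carried by the prime indicator. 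Converting this into something amenable to local $U^s$ theory requires showing that the $W$-tricked von Mangoldt weight is discorrelated from the relevant $s$-step nilsequences, i.e.\ exactly the $U^{s+1}$-uniformity of $\Lambda_W-1$ established by Green, Tao and Ziegler. No amount of Cauchy--Schwarz substitutes for this arithmetic input; a proof outline that never mentions it has a genuine gap.

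Second, step (ii) misreads the hypothesis. The seminorm $\norm{\cdot}_{U^s_{*,\text{log}}(\mathbf{I})}$ defined in Section~\ref{SS:uniformity} is \emph{not} a dual seminorm: it is the logarithmic average over $n$ of the local Gowers norms $\norm{S_nf}_{U^s[H]}$. Its vanishing does not say that $\mathbb{E}^{\log}_n f(n)\,g_k(n)\to 0$ for every $(g_k)$ with uniformly bounded local $U^s$ norm; for such a conclusion one would need $g_k$ to have uniformly bounded local $U^s$-\emph{dual} norm, a different and in general incomparable condition. Relatedly, your appeal to a ``standard multilinear Cauchy--Schwarz argument of generalised von Neumann type'' to bound the local $U^s$ norm of $G_k$ is the wrong way round: generalized von Neumann bounds a multiple correlation \emph{by} the minimum of the $U^s$ norms of its factors, it does not output $U^s$ control of an aggregated auxiliary sequence. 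The correct mechanism is: after entropy decrement and the von Mangoldt input, localize the $(s+1)$-point correlation to windows $[n,n+H]$, apply a local generalized von Neumann inequality to dominate each windowed correlation by a constant multiple of $\norm{S_nf}_{U^s[H]}$, and only then average in $n$ to invoke $\norm{f}_{U^s_{*,\log}(\mathbf{I})}=0$. As written, your step (ii) asserts a duality the hypothesis does not provide.
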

\begin{remarks}
	$\bullet$ The equivalence is proved in \cite{T16} only when $N_k=k$, $k\in\mathbb{N}$, but
	the argument in \cite{T16} also gives the  stated result.
	
	$\bullet$ An extension of this result that covers more general multiplicative functions is
	suggested in \cite[Remarks~1.10 and 3.5]{T16}. We give a related result in Theorem~\ref{T:Tao'} below.

	$\bullet$ The two main ingredients used in the  proof of Theorem~\ref{T:Tao}  were a newly devised ``entropy decrement''
	argument from \cite{Tao15} and  the Gowers uniformity of the $W$-tricked  von Mangoldt function established in   \cite{GT10b, GT12, GTZ12c}.
\end{remarks}

In order to obtain Theorem~\ref{T:ErgodicElliott} we use the following variant of the previous result which is established in Section~\ref{SS:Tao'}. The starting point of the proof is an identity for general sequences (see Proposition~\ref{P:Tao}) which is implicit
in \cite{Tao15}.
\begin{theorem}\label{T:Tao'}
	Let $f_1\in \mathcal{M}$ be a multiplicative function which admits correlations  for logarithmic averages on  $\mathbf{I}=([N_k])_{k\in\mathbb{N}}$, $N_k\to \infty$, and
	satisfies  $\norm{f_1}_{U^{s}_{\text{log}}(\mathbf{I})}=0$ for some $s\geq 2$. Then
	$$
	\mathbb{E}^{\log}_{m\in \mathbf{I}}\,  f_1(m+n_1)\cdots f_s(m+n_s)=0
	$$
	holds for all $f_2,\ldots, f_s\in \mathcal{M}$ and  distinct $n_1,\ldots, n_s\in \mathbb{N}$.
\end{theorem}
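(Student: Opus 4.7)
The plan is to follow the strategy of Tao from \cite{Tao15, T16} used for Theorem~\ref{T:Tao}, but adapted to the more general setting where $f_2,\ldots,f_s$ are arbitrary elements of $\mathcal{M}$ (rather than fixed equal to the Liouville or M\"obius function), and where the controlling object is the primal seminorm $\norm{f_1}_{U^s_{\log}(\mathbf{I})}$ rather than its dual. As the author indicates, the starting point is Proposition~\ref{P:Tao}, a general identity (implicit in \cite{Tao15}) that, for any bounded sequences $a_1,\ldots,a_s$, rewrites a logarithmically averaged correlation $\mathbb{E}^{\log}_{m\in[N]}\,a_1(m+n_1)\cdots a_s(m+n_s)$ as an average over primes $p$ in a suitable dyadic range $[P,2P]$ of the same correlation with the shifts $n_i$ appropriately dilated. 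This identity is obtained via the change of variables $m=pm'$ in logarithmic averages combined with a dyadic decomposition and is independent of any multiplicative structure.

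Applied to $a_i=f_i$ and then combined with the multiplicativity of $f_1$, i.e.\ $f_1\bigl(p(m+n_1)\bigr)=f_1(p)f_1(m+n_1)$ valid away from the negligible set where $p\mid m+n_1$, the identity converts the original correlation (up to an error tending to $0$) into an average of the form
$$
\mathbb{E}_{p\sim P}\,f_1(p)\;\mathbb{E}^{\log}_{m\in[N_k]}\,f_1(m+n_1)\prod_{i=2}^{s}f_i(pm+pn_i).
$$
I would then perform an $(s-1)$-fold Cauchy--Schwarz (or van der Corput / box norm) unfolding in the prime parameter $p$ together with the shifts $pn_i-pn_1$, using only that $|f_i|\leq 1$ for $i\geq 2$ to absorb those factors. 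The resulting inequality expresses the absolute value of the correlation as a bounded multiple of a $2^{s-1}$-th root of a multilinear average of $f_1$ evaluated along an $(s-1)$-dimensional parallelepiped of shifts, which by the definition of the local Gowers seminorms dominates exactly $\norm{f_1}_{U^s_{\log}(\mathbf{I})}$ (plus an error from the prime range). The hypothesis $\norm{f_1}_{U^s_{\log}(\mathbf{I})}=0$ then forces the correlation to vanish along $\mathbf{I}$.

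The main obstacle I expect is the careful bookkeeping of error terms produced in this scheme: the contributions from $m$ with $p\mid m+n_i$ (handled by Mertens-type bounds summing $\sum_{p\sim P}1/p$), the interaction between the dyadic prime range and the intervals $[N_k]$, which forces a choice $P=P(k)\to\infty$ slowly enough that no boundary effect dominates, and the verification that the Cauchy--Schwarz unfolding produces precisely $\norm{f_1}_{U^s_{\log}(\mathbf{I})}$ on the nose rather than an auxiliary quantity that would require an inverse theorem to translate. A secondary subtlety, which also explains the asymmetry of the hypothesis, is that $f_2,\ldots,f_s$ are assumed only to be multiplicative, so one cannot invoke their multiplicativity in the same symmetric way as for $f_1$; the entire argument must load the averaging (via dilations $m\mapsto pm$) onto the single factor $f_1$, and it is only this factor whose Gowers norm enters the final estimate.
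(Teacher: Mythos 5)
Your starting point is exactly the paper's: the change-of-variables identity of Proposition~\ref{P:Tao} (and its consequence Corollary~\ref{C:Tao}) is invoked to replace the correlation with $\mathbb{E}_{p\in\P}\,c_p\,\mathbb{E}^{\log}_{m}\prod_j f_j(m+pn_j)$, and you correctly observe that only $\norm{f_1}_{U^s_{\log}(\mathbf{I})}$ should enter the final estimate. From there, however, your proposed route diverges from the paper's and has a genuine gap.

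You propose to finish by an $(s{-}1)$-fold Cauchy--Schwarz/van der Corput unfolding directly in the prime parameter $p$, hoping that this produces $\norm{f_1}_{U^s_{\log}(\mathbf{I})}$ ``on the nose.'' It will not. After one Cauchy--Schwarz you already see expressions indexed by differences $p-p'$ of primes, and after $s{-}1$ iterations the resulting parallelepiped of shifts is parametrized by differences of primes rather than by unrestricted integers. That is a \emph{restricted} box-type average, not the $U^s_{\log}(\mathbf{I})$ seminorm. Primes are strongly biased modulo small moduli, so there is no elementary reason this restricted average should be comparable to the genuine Gowers seminorm; converting it requires real analytic-number-theoretic input. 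You flag a worry that the unfolding might produce ``an auxiliary quantity that would require an inverse theorem to translate,'' but that is not quite the issue---what is needed is the Gowers uniformity of the $W$-tricked von Mangoldt function (Green--Tao, GTZ), which lets one replace the prime average by an integer average in arithmetic progressions $n\mapsto Wn+k$ and only then unfold. This ingredient is essential (it appears explicitly in the remarks following Theorem~\ref{T:Tao} and in Tao's original argument for the Liouville case) and your sketch omits it entirely.

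The paper handles this by a different mechanism than raw Cauchy--Schwarz: it passes through the Furstenberg correspondence (Proposition~\ref{P:correspondence} and the multi-sequence variant from~\cite{F16}), rewriting the correlation as $\int\prod T^{pn_j}F_j\,d\mu$, identifying $\norm{f_1}_{U^s_{\log}(\mathbf{I}')}=\nnorm{F_1}_s$, and then quoting Proposition~\ref{P:ergodic}, a multiple ergodic theorem along the primes. That proposition is precisely where the $W$-trick and the Gowers uniformity of von Mangoldt are used (following~\cite{FHK2}), after which the vanishing along integers follows from Leibman's convergence result and the inequality $\nnorm{F_1\otimes\overline{F_1}}_{s-1,T\times T}\leq\nnorm{F_1}_{s,T}^2$. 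In short: your plan is missing the ``average over primes $\Rightarrow$ average over integers'' step, and the finitistic Cauchy--Schwarz bookkeeping that you anticipate is not the real obstacle. Minor additional point: in your displayed expression you have factored the prime $p$ out of $f_1$ but left $f_i(pm+pn_i)$ intact for $i\geq2$, which mixes scaled and unscaled arguments of $m$; Corollary~\ref{C:Tao} uses multiplicativity of \emph{all} $f_j$ to produce $c_p=\prod_j\overline{f_j(p)}$ and symmetric shifts $m+pn_j$, which is the clean form the subsequent argument needs.
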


\subsection{A problem} The previous results motivate the following problem:
\begin{problem*}
	Let   $f\in \mathcal{M}$ be a strongly aperiodic  multiplicative function which admits correlations
	for Ces\`aro (or logarithmic) averages on $\mathbf{I}=([N_k])_{k\in\mathbb{N}}$, $N_k\to\infty$. Show that  the sequence
	$(f(n))_{n\in\mathbb{N}}$ is ergodic on $\mathbf{I}$ for Ces\`aro (corr. logarithmic) averages.
\end{problem*}
\begin{remark}
	In fact, it seems likely that every real valued bounded multiplicative function is ergodic for Ces\`aro averages on ${\bf I}=([N])_{N\in\mathbb{N}}$.
\end{remark}
A solution to this problem  for logarithmic averages for the Liouville (or the M\"obius)  function,   coupled with Corollary~\ref{C:ErgodicLogChowla},
would imply that the Liouville (or the M\"obius)  function satisfies the Chowla  conjecture, and hence the Sarnak conjecture, for logarithmic averages. It would also imply that all possible sign patterns are taken by consecutive values of $\lambda$, and that   each
size $\ell$ pattern  with logarithmic density $2^{-\ell}$, and as a consequence, with upper natural density greater than
$2^{-\ell}$.

Currently, we cannot even  exclude the (unlikely) possibility that $\lambda$  is generic for a   measure on $\{-1,1\}^\mathbb{N}$  which induces   a measure preserving
system with
ergodic components circle rotations.

\subsection*{Acknowledgement} I would like to thank B.~Host for various useful discussions
during the preparation of this article and B.~Kra and P.~Sarnak for useful remarks.

\section{Background,  notation, and tools}\label{S:notation}
In this section we define some concepts used throughout the article.

\subsection{Ces\`aro and logarithmic averages}\label{SS:averages}
Recall that for  $N\in\mathbb{N}$  we let  $[N]=\{1,\dots,N\}$.
If $A$ is a finite non-empty subset of $\mathbb{N}$ and $a\colon A\to \mathbb{C}$, then
we define the
\begin{itemize}
	\item
	{\em  Ces\`aro average of $(a(n))_{n\in A}$ on $A$} to be
	$$
	\mathbb{E}_{n\in A}a(n):=\frac{1}{|A|}\sum_{n\in A}a(n);
	$$
	
	\item	
	{\em logarithmic average   of $(a(n))_{n\in A}$ on $A$}  to be
	$$
	\mathbb{E}^{\log}_{n\in A}\, a(n):= \frac{1}{\sum_{n\in A} \frac{1}{n}} \sum_{n\in A} \frac{a(n)}{n}.
	$$
\end{itemize}

We say that the sequence of intervals
$\mathbf{I}=(I_N)_{N\in\mathbb{N}}$ is a {\em   F{\o}lner sequence for}
\begin{itemize}
	\item  {\em    Ces\`aro   averages} if $\lim_{N\to\infty} |I_N|=\infty$;
	
	\item  {\em     logarithmic averages} if
	$
	\lim_{N\to\infty} \sum_{n\in I_N} \frac{1}{n}=\infty.
	$
\end{itemize}
If $I_N=(a_N,b_N)$, $N\in\mathbb{N}$, the second condition is equivalent to  $b_N/a_N\to \infty$.

If $a\colon \mathbb{N}\to \mathbb{C}$ is a bounded sequence, and $\mathbf{I}=(I_N)_{N\in\mathbb{N}}$ is a F{\o}lner sequence of intervals for Ces\`aro or logarithmic averages,  we define the
\begin{itemize}
	\item {\em Ces\`aro mean  of $(a(n))_{n\in\mathbb{N}}$ on $\mathbf{I}$} to be
	$$
	\mathbb{E}_{n\in \mathbf{I}}\, a(n):=\lim_{N\to\infty} \mathbb{E}_{n\in I_N}\, a(n)
	$$
	if the limit exists;
	
	\item {\em logarithmic mean  of $(a(n))_{n\in\mathbb{N}}$ on $\mathbf{I}$} to be
	$$
	\mathbb{E}^{\log}_{n\in \mathbf{I}}\, a(n):=\lim_{N\to\infty} \mathbb{E}^{\log}_{n\in I_N}\, a(n)
	$$
	if the limit exists.
	
	\item If the previous mean values exist
	for every F{\o}lner sequence of intervals $\mathbf{I}$,
	then we denote the common mean value by $\mathbb{E}_{n\in\mathbb{N}}\, a(n)$ and
	$\mathbb{E}^{\log}_{n\in\mathbb{N}}\, a(n)$ respectively.
\end{itemize}
Note that  all  these mean values are  shift invariant, meaning, for every $a\in \ell^\infty(\mathbb{N})$ and $h\in\mathbb{N}$ the sequences $(a(n))_{n\in\mathbb{N}}$ and $(a(n+h))_{n\in\mathbb{N}}$
have the same Ces\`aro/logarithmic mean on $\mathbf{I}$.

It is easy to see  using partial summation, that if $(a(n))_{n\in\mathbb{N}}$ has a mean value on $\mathbf{I}=([N])_{N\in\mathbb{N}}$, then its logarithmic mean on $\mathbf{I}=([N])_{N\in\mathbb{N}} $ exists
and the two means coincide. Note though that a similar statement fails for some subsequences $\mathbf{I}=([N_k])_{k\in\mathbb{N}}$ with $N_k\to \infty$, and also the converse implication fails for $\mathbf{I}=([N])_{N\in\mathbb{N}} $.

\begin{definition}
	Let $\mathbf{I}=(I_N)_{N\in\mathbb{N}}$ be a   sequence of intervals with $|I_N|\to \infty$.
	We say that the sequence $a\in\ell^\infty(\mathbb{N})$ satisfies the {\em Chowla conjecture for Ces\`aro  averages on $\mathbf{I}$} if
	$$
	\mathbb{E}_{m\in \mathbf{I}}\,  a(c_1m+n_1)\cdots a(c_sm+n_s)=0,
	$$
	for all $s\in \mathbb{N}$, $c_1,\ldots, c_s\in \mathbb{N}$, and non-negative integers $n_1,\ldots, n_s$
	such that $c_i n_j\neq c_jn_i$ for all $i\neq j$.

	Similar definitions apply for logarithmic averages and when we restrict the number of terms in the product.
\end{definition}

\subsection{Measure preserving systems}
A {\em measure preserving system}, or simply {\em a system}, is a quadruple $(X,\mathcal{X},\mu,T)$
where $(X,\mathcal{X},\mu)$ is a probability space and $T\colon X\to X$ is an invertible  measure preserving transformation. The system is ergodic if the only sets that
are  invariant by $T$ have measure $0$ or $1$. The von Neumann ergodic theorem implies  that for ergodic systems we have
$$
\lim_{N\to \infty}\mathbb{E}_{n\in I_N} \int T^nF\cdot G\, d\mu= \int F\, d\mu\, \int G\, d\mu,
$$
for every  sequence of intervals $(I_N)_{N\in \mathbb{N}}$
with $|I_N|\to \infty$
and functions $F,G\in L^2(\mu)$. In the previous statement and throughout,  with $TF$ we denote the composition $F\circ T$
and with $T^n$ we denote the composition $T\circ\cdots\circ T$.

\subsection{Ergodicity of sequences}\label{SS:ergodic}
To each bounded sequence  that is distributed ``regularly'' along a  sequence of intervals with lengths increasing to infinity,
we  associate a measure preserving system; the notion of ergodicity of this sequence is then naturally inherited from the corresponding property of the system.

\begin{definition}\label{D:correlations}
	Let $ {\bf I}:=(I_N)_{N\in\mathbb{N}}$ be a sequence of intervals with $|I_N|\to \infty$.
	We say that the   sequence $a\in \ell^\infty(\mathbb{N})$  {\em admits correlations for Ces\`aro averages   on ${\bf I}$}, if the limit
	$$
	\lim_{N\to\infty} \mathbb{E}_{m\in I_N}\,  b_1(m+n_1)\cdots b_s(m+n_s)
	$$
	exists, for every $s\in \mathbb{N}$,   $n_1,\ldots, n_s\in \mathbb{N}$ (not necessarily distinct), and all sequences
	$b_1,\ldots, b_s$  that
	belong to the set $\{a, \bar{a}\}$.
	
	A similar definition applies for logarithmic averages;  in place of $\mathbb{E}_{m\in I_N}$  use $\mathbb{E}^{\log}_{m\in I_N}$.
\end{definition}
\begin{remark}
	If $a\in \ell^\infty(\mathbb{Z})$, then using a diagonal argument we get that any sequence of intervals $\mathbf{I}=(I_N)_{N\in \mathbb{N}}$
	has a subsequence $\mathbf{I}'=(I_{N_k})_{k\in\mathbb{N}}$, such that the sequence  $(a(n))_{n\in\mathbb{N}}$ admits correlations on $\mathbf{I}'$.
\end{remark}

The correspondence principle of Furstenberg was originally used in \cite{Fu77} in order to translate  Szemer\'edi's theorem on arithmetic progressions to an ergodic statement.
We will use the following  variant
which applies to general  bounded sequences:
\begin{proposition}\label{P:correspondence}
	Let $a\in\ell^\infty(\mathbb{N})$ be a sequence that  admits
	correlations for Ces\`aro averages  on the sequence of intervals ${\bf I}:=(I_N)_{N\in\mathbb{N}}$ with $|I_N|\to \infty$.
	Then there exist a  system $(X,\mathcal{X},\mu, T)$ and a
	function $F\in L^\infty(\mu)$, such that
	$$
	\mathbb{E}_{m\in {\mathbf{I}} }\,  a_1(m+n_1)\cdots a_s(m+n_s)= \int   T^{n_1}
	F_1\cdots T^{n_s}
	F_s\, d\mu,
	$$
	for every $s\in \mathbb{N}$,  $n_1, \ldots, n_s\in \mathbb{N}$,
	where for $j=1,\ldots, s$ the sequence
	$a_j$ is either  $a$ or $\overline{a}$
	and $F_j$ is  $F$ or $\overline{F}$ respectively.
	A similar statement holds for logarithmic averages.
\end{proposition}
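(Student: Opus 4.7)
The plan is to apply Furstenberg's correspondence principle realized on the sequence space itself. Let $K = \{z \in \mathbb{C}: |z| \leq \|a\|_\infty\}$, set $X = K^{\mathbb{Z}}$ with the product topology (so $X$ is compact metrizable), let $T$ be the shift $(Tx)_n = x_{n+1}$, and let $F \in C(X)$ be the coordinate projection $F(x) = x_0$, so that $T^n F(x) = x_n$. Extend $a$ from $\mathbb{N}$ to $\mathbb{Z}$ arbitrarily (say, by $0$ on non-positive integers) and let $\omega \in X$ be the point $\omega_n = a(n)$. Define the empirical measures
\begin{equation*}
\mu_N := \frac{1}{|I_N|}\sum_{m \in I_N} \delta_{T^m \omega}.
\end{equation*}

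Next, I would show that $\mu_N$ converges weak-$*$ to a probability measure $\mu$ on $X$. By the Stone--Weierstrass theorem, the algebra generated by the functions $T^n F$, $T^n \overline{F}$ ($n \in \mathbb{Z}$) together with the constants is uniformly dense in $C(X)$, so it suffices to check that $\int \phi\, d\mu_N$ converges for every monomial $\phi = T^{n_1}F_1 \cdots T^{n_s}F_s$ with $F_j \in \{F, \overline{F}\}$. A direct unwinding of the definitions (and an initial shift of indices to make every $n_j$ nonnegative, justified by shift-invariance of Ces\`aro means) gives
\begin{equation*}
\int \phi\, d\mu_N = \mathbb{E}_{m \in I_N}\, a_1(m+n_1)\cdots a_s(m+n_s),
\end{equation*}
whose limit exists by the assumption that $a$ admits correlations on $\mathbf{I}$. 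A standard approximation argument upgrades convergence on the dense subalgebra to weak-$*$ convergence; the resulting limit $\mu$, together with $F$, satisfies the stated identity by construction.

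Finally, I would verify that $\mu$ is $T$-invariant. For $\phi \in C(X)$ and $I_N = [a_N, b_N]$ a telescoping computation gives
\begin{equation*}
\int \phi\circ T\, d\mu_N - \int \phi\, d\mu_N = \frac{\phi(T^{b_N+1}\omega) - \phi(T^{a_N}\omega)}{|I_N|},
\end{equation*}
which is bounded by $2\|\phi\|_\infty / |I_N|$ and hence tends to $0$, so $T_*\mu = \mu$. The logarithmic case is handled analogously with the measures $\mu_N^{\log} = \bigl(\sum_{n \in I_N} 1/n\bigr)^{-1} \sum_{m \in I_N} \tfrac{1}{m}\, \delta_{T^m \omega}$. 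The one nonroutine point, which I expect to be the main obstacle, is verifying shift-invariance in the logarithmic setting: an Abel summation gives a telescoping error involving the weight differences $\tfrac{1}{m-1} - \tfrac{1}{m}$ plus two boundary terms. These are controlled because $\sum_{m}\|a\|_\infty/(m(m-1))$ converges absolutely and the boundary contribution is $O(1/\min I_N)$, both negligible against the F\o lner denominator $\sum_{n \in I_N} 1/n \to \infty$.
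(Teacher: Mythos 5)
Your proof is correct and follows essentially the same construction as the paper: sequence space $D^{\mathbb{Z}}$, shift $T$, coordinate function $F$, empirical measures along $\mathbf{I}$, and $T$-invariance by telescoping. The only (inessential) difference is that the paper simply takes a weak-$*$ limit \emph{point} of $(\mu_N)$ by compactness, whereas you prove genuine weak-$*$ convergence of the whole sequence via Stone--Weierstrass; both suffice, and your extra step also recovers the uniqueness remark the paper makes after the proposition.
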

\begin{remark}
	For sequences  bounded by $1$, in the previous correspondence,  $X$, $\mathcal{X}$, $T$, and $F$ can be taken to be independent of the sequence $a$ and ${\bf I}$, and it is only the measure $\mu$ that varies. Furthermore,  the system constructed is uniquely determined up to isomorphism by the pair $(a,\mathbf{I})$.
\end{remark}
\begin{proof}
	Let $X:=D^{\mathbb{Z}}$, where
	$D$ is the closed disk in $\mathbb{C}$ of radius $\norm{a}_\infty$,
	be endowed with the product topology and with the invertible and continuous shift $T$ given by
	$
	(Tx)(k)=x(k+1), k\in \mathbb{Z}.$
	We define $F\in C(X) $ by $F(x):=x(0)$, $x\in X$,
	and  	$\omega \in D^{\mathbb{Z}}$
	by $\omega(k):=a(k)$ for  $k\in\mathbb{N}$
	and $\omega(k)=0$ for $k\leq 0$.
	Lastly, we let $\mu$ be a $w^*$-limit point for   the sequence of measures
	$
	\mu_N:=\frac 1{|I_N|}\sum_{n\in I_N}\delta_{T^n \omega},  N\in \mathbb{N}.
	$
	Then $\mu$ is a $T$-invariant probability measure on $X$, and since	$F(T^n \omega)=a(n)$ for $n\in\mathbb{N}$ and $(a(n))_{n\in\mathbb{N}}$ admits correlations for Ces\`aro averages on $\mathbf{I}$, the asserted
	identity follows immediately.
\end{proof}	

\begin{definition}
	Let $a\in\ell^\infty(\mathbb{N})$ be a sequence that  admits
	correlations  for Ces\`aro averages  on the sequence  of intervals ${\bf I}:=(I_N)_{N\in\mathbb{N}}$
	with $|I_N|\to \infty$.
	We call the system defined in Proposition~\ref{P:correspondence}
	{\em the Furstenberg  system induced by $a$ and $\mathbf{I}$ for Ces\`aro averages}.
	
	A similar definition applies for logarithmic averages.
\end{definition}
\begin{remarks}
	$\bullet$		A priori a sequence $a\in \ell^\infty(\mathbb{Z})$ may have uncountably many non-isomorphic Furstenberg systems
	depending on which sequence of intervals ${\bf I}$ we choose to work with. Furthermore, for fixed $(a,{\bf I})$ the  Furstenberg systems associated with Ces\'aro and  logarithmic averages could be very different.

	$\bullet$	If we assume that the Liouville function admits correlations  on $([N])_{N\in\mathbb{N}}$,
	then  the corresponding Furstenberg system is {\em the Liouville system} alluded to in the introduction.
\end{remarks}

\begin{definition} Let $\mathbf{I}=(I_N)_{N\in\mathbb{N}}$ be a sequence of intervals with $|I_N|\to \infty$.
	We say that a sequence  $a\in \ell^\infty(\mathbb{N})$ is {\em  ergodic  for Ces\`aro averages   on   $\mathbf{I}$} if
	\begin{enumerate}
		\item it admits  correlations    for Ces\`aro averages  on $\mathbf{I}$; and
		
		\item the induced measure preserving system  for Ces\`aro averages  is ergodic.
	\end{enumerate}
	A similar definition applies for logarithmic averages in which case  we say that  $a\in \ell^\infty(\mathbb{N})$ is {\em  ergodic  for logarithmic averages  on   $\mathbf{I}$}.
\end{definition}
Note that the second condition  for Ces\`aro averages is equivalent
to having the identities
\begin{equation}\label{E:ET}
	\lim_{N\to\infty} \mathbb{E}_{n\in [N]}\big( \mathbb{E}_{m\in \mathbf{I}}\,  b(m+n)\cdot c(m)\big)=
	\mathbb{E}_{m\in\mathbf{I}}\, b(m) \cdot \mathbb{E}_{m\in\mathbf{I}}\, c(m),
\end{equation}
for all $b,c\in \ell^\infty(\mathbb{N})$ of the form
$b(m)=a_1(m+h_1)\cdots a_s(m+h_s)$,  $m\in\mathbb{N}$,  for some $s\in \mathbb{N}$,  non-negative integers $h_1,\ldots, h_s$, and $a_i\in \{a,\overline{a}\}$, and  similarly for $(c(m))_{m\in\mathbb{N}}$.
For logarithmic averages a similar condition holds with $\mathbb{E}_{m\in \mathbf{I}}$ replaced by
$\mathbb{E}^{\log}_{m\in \mathbf{I}}$.

\subsection{Ergodic seminorms and the factors $\mathcal{Z}_s$.}\label{SS:GHKseminorms}
Following \cite{HK05a}, if $(X,\mathcal{X}, \mu, T)$ is a system we define  the Host-Kra seminorms of $F\in L^\infty(\mu)$   inductively by
$$
\nnorm{F}^2_{1}:=\mathbb{E}_{h\in\mathbb{N}} \int T^hF\cdot \overline{F} \, d\mu, \qquad
\nnorm F_{s+1}^{2^{s+1}} :=\mathbb{E}_{h\in\mathbb{N}}
\nnorm{T^hF\cdot \overline{F}}_{s}^{2^{s}},
$$
for $s\in \mathbb{N}$, where the implicit  limits defining the mean values $\mathbb{E}_{h\in\mathbb{N}}$
are known to exist by \cite{HK05a}. It is also shown in the same article that for every $s\in \mathbb{N}$ there exist  $T$-invariant sub-$\sigma$-algebras $\mathcal{Z}_s$ of $L^\infty(\mu)$
such that $\nnorm{F}_{s+1}=0$ if and only if $F\, \bot\,  L^2(\mathcal{Z}_s)$, meaning $\int F\cdot G\, d\mu=0$ for every $G\in L^2(\mu)$ that is $\mathcal{Z}_s$-measurable. We remark that the previous properties are proved in \cite{HK05a} only for ergodic systems but similar
arguments apply for general systems.
We are going to use the following important structure theorem (nilsystems are defined in Section~\ref{SS:nilmanifolds}):

\begin{theorem}[Host, Kra \cite{HK05a}]\label{T:HK}
	Let $(X,\mathcal{X}, \mu,T)$ be an ergodic system and $s\in \mathbb{N}$. Then the system $(X,\mathcal{Z}_s,\mu,T)$ is an inverse limit of $s$-step nilsystems.
\end{theorem}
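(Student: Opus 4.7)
The plan is to follow the strategy of Host and Kra, which centers on constructing a tower of self-joinings that encode the iterated structure of the seminorms. Setting $\mu^{[0]} := \mu$, I would define inductively a measure $\mu^{[s+1]}$ on $X^{2^{s+1}}$ to be the self-joining of two copies of $\mu^{[s]}$ that is relatively independent over the $\sigma$-algebra of sets invariant under the diagonal transformation $T^{[s]} := T \times \cdots \times T$ acting on $(X^{2^s}, \mu^{[s]})$. A direct calculation then gives the integral formula
$$
\nnorm{F}_s^{2^s} \;=\; \int \prod_{\epsilon \in \{0,1\}^s} \mathcal{C}^{|\epsilon|} F(x_\epsilon)\; d\mu^{[s]}(x),
$$
where $\mathcal{C}$ denotes complex conjugation and $|\epsilon|$ is the sum of the coordinates of $\epsilon$. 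From this representation a Cauchy-Schwarz-Gowers inequality falls out, which shows that $\nnorm{\cdot}_s$ is indeed a seminorm and obeys the monotonicity $\nnorm{F}_s \leq \nnorm{F}_{s+1}$. The factor $\mathcal{Z}_s$ is then obtained by duality: it is the unique $T$-invariant sub-$\sigma$-algebra whose $L^2$-orthogonal complement equals $\{F \in L^\infty(\mu) : \nnorm{F}_{s+1} = 0\}$, existence being standard once the seminorm property is in hand.

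The substantive content is to show by induction on $s$ that $(X, \mathcal{Z}_s, \mu, T)$ is an inverse limit of $s$-step nilsystems. The base case $s=1$ identifies $\mathcal{Z}_1$ with the Kronecker factor, which is a compact abelian group rotation; this can be read off directly from a spectral computation using the definition of $\nnorm{\cdot}_2$. For the inductive step, one shows that $(X, \mathcal{Z}_s, \mu, T)$ is an isometric abelian extension of $(X, \mathcal{Z}_{s-1}, \mu, T)$, by analyzing how $\mu^{[s+1]}$ disintegrates over the ``lower faces'' of the cube and invoking a relative ergodic decomposition along the diagonal.

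The main obstacle is the passage from this abstract tower of isometric extensions to genuine nilsystem structure. The strategy is to introduce the group of \emph{cubic automorphisms} of $(X^{2^s}, \mu^{[s]})$---measure-preserving transformations compatible with all the faces and diagonals of the cube---and to show that the cocycle defining the extension of $\mathcal{Z}_{s-1}$ by $\mathcal{Z}_s$ is of ``type $s-1$,'' in the sense that certain iterated coboundaries vanish. A Mackey-style analysis then forces this cocycle to arise, up to a measurable coboundary, from a translation on a nilmanifold; equivalently, the skew-product is controlled by a nilpotent Lie group whose homogeneous quotient is the desired $s$-step nilsystem factor. Exhausting $L^2(\mathcal{Z}_s)$ by countably many such factors presents $(X, \mathcal{Z}_s, \mu, T)$ as an inverse limit of $s$-step nilsystems. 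The hard part of the whole argument is this algebraic lift---turning measure-theoretic ``cocycle rigidity'' into an honest nilpotent Lie group action---and it is what occupies the technical core of \cite{HK05a}.
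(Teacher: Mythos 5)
The paper does not supply a proof of this theorem: it is quoted as the celebrated structure theorem of Host and Kra \cite{HK05a} and is used as a black box (specifically, inside the proof of Proposition~\ref{P:InfiniteDec}, to pass from the factor $\mathcal{Z}_s$ to an inverse limit of nilsystems). So there is no in-paper argument for your sketch to agree with or diverge from. What you have written is a faithful high-level outline of the actual Host--Kra proof: the inductive construction of the cube measures $\mu^{[s]}$ as relatively independent self-joinings over the $T^{[s]}$-invariant $\sigma$-algebra, the integral presentation of $\nnorm{\cdot}_s$ and the Gowers--Cauchy--Schwarz machinery it yields, the identification of $\mathcal{Z}_1$ with the Kronecker factor, the ladder of compact abelian extensions governed by cocycles of bounded type, and the Mackey-style cocycle-rigidity analysis that upgrades the abstract tower to a genuine nilpotent Lie group action and hence to an inverse limit of nilsystems. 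You correctly flag that this last step---building the Host--Kra group out of the group of cube transformations and forcing the type-restricted cocycle to come from a nilmanifold translation---is where the real work lies, and it does indeed occupy the technical core of \cite{HK05a}. As a blind reconstruction of the roadmap it is accurate; but it remains a roadmap, not a proof, with the cocycle taxonomy and the construction of the structure group stated only in outline, so it functions exactly as the paper does: a pointer to \cite{HK05a} rather than a self-contained argument.
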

The last property means that there exist $T$-invariant sub-$\sigma$-algebras $\mathcal{Z}_{s,n}$, $n\in\mathbb{N}$,   that span $\mathcal{Z}_s$, such that for every $n\in\mathbb{N}$ the factor system associated with $\mathcal{Z}_{s,n}$ is isomorphic to an $s$-step nilsystem.

\subsection{Local  uniformity seminorms}\label{SS:uniformity}
Let $\mathbf{I}=(I_N)_{N\in\mathbb{N}}$ be a  sequence of intervals with $|I_N|\to \infty$ and  $a\in\ell^\infty(\mathbb{N})$ be
a sequence that admits correlations for Ces\`aro averages on $\mathbf{I}$.
Following \cite{HK09},  we define   the uniformity seminorms  $\norm{a}_{U^s(\mathbf{I})}$ inductively
as follows:
$$
\norm{a}_{U^1(\mathbf{I})}^2:=\mathbb{E}_{h\in\mathbb{N}}
\big(\mathbb{E}_{n\in \mathbf{I}}\, a(n+h)\, \overline{a(n)})
$$
(by van der Corput's lemma $
|\mathbb{E}_{n\in \mathbf{I}}a(n)|\leq 4\,  \norm{a}_{U^1(\mathbf{I})}$)
and for $s\in \mathbb{N}$
$$
\norm{a}_{U^{s+1}(\mathbf{I})}^{2^{s+1}}:=\mathbb{E}_{h\in \mathbb{N}} \norm{S_ha\cdot \overline{ a}}_{U^s(\mathbf{I})}^{2^s},
$$
where $S_h$ is the shift operator on $ \ell^\infty(\mathbb{N})$ defined for $h\in\mathbb{N}$  by
$$
(S_ha)(n):=a(n+h), \quad n\in\mathbb{N}.
$$
It is not immediately clear that all the iterative limits defining the above averages exist. This can be proved by reinterpreting these seminorms in ergodic terms  using
the measure preserving  system $(X,\mathcal{X}, \mu, T)$
and the function  $F\in L^\infty(\mu)$ induced by $a$ and $\mathbf{I}$;  we then have
$\norm{a}_{U^s(\mathbf{I})}=\nnorm{F}_{s} $
where $\nnorm{F}_{s}$ is defined as in Section~\ref{SS:GHKseminorms}.
Using the ergodic reinterpretation and \cite[Theorem~1.2]{HK05a} we deduce the identity
\begin{equation}\label{E:Us}
	\norm{a}_{U^s(\mathbf{I})}^{2^s}=\mathbb{E}_{\underline{h}\in \mathbb{N}^s}\Big(\mathbb{E}_{n\in\mathbf{I}}\prod_{\epsilon\in [\![s]\!]}\mathcal{C}^{|\epsilon|} a(n+\epsilon\cdot \underline{h})\Big),
\end{equation}
where
$$
\mathcal{C}^k(z):=\begin{cases} z\quad \text{ if } k \text{ is even};  \\
\overline{z} \quad \text{ if } k \text{ is odd}, \end{cases}
$$
and for $s\in \mathbb{N}$ we let $\underline{0}:=(0,\ldots, 0)$,
$$
[\![s]\!]:=\{0,1\}^{s}, \quad [\![s]\!]^*:=[\![s]\!]\setminus \{\underline{0}\},
$$
and
for $\epsilon=(\epsilon_1,\ldots, \epsilon_s)$  we let
$$
|\epsilon|:=\epsilon_1+\cdots+\epsilon_s.
$$
Furthermore, the limit $\mathbb{E}_{\underline{h}\in \mathbb{N}^s}$ can be defined using averages taken  over arbitrary  F{\o}lner sequences of subsets of $\mathbb{N}^s$, or can be defined using the iterative limit $\mathbb{E}_{h_s\in \mathbb{N}}\cdots \mathbb{E}_{h_1\in \mathbb{N}}$.
All these limits exist and are equal; this follows from \cite[Theorem~1.2]{HK05a}.
It is shown in \cite{HK05a} that    $\nnorm{F}_{s} \leq \nnorm{F}_{s+1}$ for every $F\in L^\infty(\mu)$ and $s\in \mathbb{N}$;  we deduce that
$$
\norm{a}_{U^s(\mathbf{I})}\leq  \norm{a}_{U^{s+1}(\mathbf{I})}, \quad \text{for every } s\in \mathbb{N}.
$$

In a similar fashion, if   $a\in\ell^\infty(\mathbb{N})$
admits correlations for logarithmic averages on $\mathbf{I}$,
we define the uniformity seminorms for logarithmic averages $\norm{a}_{U^s_\text{log}(\mathbf{I})}$
as follows:
$$
\norm{a}^2_{U^1_\text{log}(\mathbf{I})}:=\mathbb{E}_{h\in\mathbb{N}}\big( \mathbb{E}^{\log}_{n\in \mathbf{I}}\, a(n+h)\, \overline{a(n)}\big)
$$
and for $s\in \mathbb{N}$
$$
\norm{a}_{U^{s+1}_\text{log}(\mathbf{I})}^{2^{s+1}}:=\mathbb{E}_{h\in \mathbb{N}} \norm{S_ha\cdot \overline{ a}}_{U^s_\text{log}(\mathbf{I})}^{2^s}.
$$
All  implicit limits   defining the  mean values $\mathbb{E}_{h\in\mathbb{N}}$  can be shown to exist. Note  that in the definition of the uniformity seminorms for logarithmic averages only the inner-most average is logarithmic, the others can be given by any shift invariant averaging scheme we like.
For example, we have
$$
\norm{a}_{U^2_\text{log}(\mathbf{I})}^4=\mathbb{E}_{(h_1,h_2)\in \mathbb{N}^2}
\big(\mathbb{E}^{\log}_{n\in \mathbf{I}}\,  a(n+h_1+h_2)\cdot \overline{a(n+h_1)}\cdot \overline{a(n+h_2)}\cdot a(n)\big).
$$

We also use variants of some local uniformity seminorms  introduced by Tao in \cite{T16} when $\mathbf{I}=([N])_{N\in\mathbb{N}}$. For $s\in \mathbb{N}$, $a\in \ell^\infty(\mathbb{N})$, and  F{\o}lner sequence of intervals $\mathbf{I}$,  we let
$$
\norm{a}_{U^{s}_*(\mathbf{I})}:=\limsup_{H\to\infty}\limsup_{N\to\infty}\mathbb{E}_{n\in I_N} \norm{S_na}_{U^s[H]}
$$
(where $(S_na)(m):=a(n+m)$) and
$$
\norm{a}_{U^{s}_{*,\text{log}}(\mathbf{I})}:=\limsup_{H\to\infty}\limsup_{N\to\infty}\mathbb{E}^{\log}_{n\in I_N} \norm{S_na}_{U^s[H]}.
$$
We used that
$$
\norm{a}_{U^s[N]}:=
\frac{\norm{a\cdot {\bf 1}_{[N]}}_{U^s(\mathbb{Z}_{2N})}}{\norm{\one_{[N]} }_{U^s(\mathbb{Z}_{2N})}} ,
$$
(\cite[Lemma~A.2]{FH15a} explains why $2N$ is an appropriate choice) where
$\mathbb{Z}_N:=\mathbb{Z}/(N\mathbb{Z})$ and $\norm{a}_{U^s(\mathbb{Z}_N)}$ are the Gowers uniformity norms. These were defined in \cite{Gow01} as follows:
$$
\norm{a}_{U^{1}(\mathbb{Z}_N)}:=|\mathbb{E}_{m\in \mathbb{Z}_N}a(m)|
$$
and for $s\in \mathbb{N}$
$$
\norm{a}_{U^{s+1}(\mathbb{Z}_N)}^{2^{s+1}}:=\mathbb{E}_{h\in \mathbb{Z}_N} \norm{S_ha\cdot \overline{a}}_{U^s(\mathbb{Z}_N)}^{2^s},
$$
where for  $N\in \mathbb{N}$ we  use the periodic extension of $a\cdot {\bf 1}_{[N]}$ to $\mathbb{Z}_N$ in the previous computations, or equivalently, we
define $(S_ha)(n):=a(n+h\! \! \mod{N})$ for $n\in \mathbb{Z}_N$.

\begin{proposition}\label{E:normequiv}
	Let  $s\in \mathbb{N}$.
	If  $a\in \ell^\infty(\mathbb{N})$ is a sequence that admits correlations for Ces\`aro averages on the  sequence
	of intervals $\mathbf{I}$,  then
	$$
	\norm{a}_{U^{s}_*(\mathbf{I})}\leq 4\, \norm{a}_{U^{s}(\mathbf{I})}.
	$$
	A similar statement holds for logarithmic averages on $\mathbf{I}$ and the corresponding estimate is
	$
	\norm{a}_{U^{s}_{*,\text{log}}(\mathbf{I})}\leq 4\, \norm{a}_{U^{s}_\text{log}(\mathbf{I})}$.
\end{proposition}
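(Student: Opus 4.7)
The plan is to pass from the finite-window local norm on the left-hand side to the infinite seminorm on the right-hand side by combining Jensen's inequality, the parallelepiped expansion of the Gowers norm on $\mathbb{Z}_{2H}$, and a F{\o}lner-type averaging argument. First, Jensen applied to the convex function $x \mapsto x^{2^s}$ gives
\begin{equation*}
\norm{a}_{U^s_*(\mathbf{I})}^{2^s} \;\leq\; \limsup_{H \to \infty} \limsup_{N \to \infty} \mathbb{E}_{n \in I_N} \norm{S_n a}_{U^s[H]}^{2^s},
\end{equation*}
so it suffices to bound the right-hand side by $4^{2^s} \norm{a}_{U^s(\mathbf{I})}^{2^s}$.

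Next I expand $\norm{S_n a}_{U^s[H]}^{2^s}$ using the inner-product form of the Gowers norm on $\mathbb{Z}_{2H}$. Choosing appropriate representatives for $\mathbb{Z}_{2H}$, whenever all indicators $\mathbf{1}_{[H]}(m+\epsilon\cdot\underline{h})$ are nonzero the mod-$2H$ arithmetic coincides with integer arithmetic in the argument of $a$, giving
\begin{equation*}
\norm{S_n a}_{U^s[H]}^{2^s} = \frac{1}{C_H}\, \mathbb{E}_{m, \underline{h} \in \mathbb{Z}_{2H}^{s+1}} \Big(\prod_{\epsilon \in [\![s]\!]} \mathbf{1}_{[H]}(m + \epsilon \cdot \underline{h})\Big) \prod_{\epsilon \in [\![s]\!]} \mathcal{C}^{|\epsilon|} a(n + m + \epsilon \cdot \underline{h}),
\end{equation*}
with $C_H := \norm{\mathbf{1}_{[H]}}_{U^s(\mathbb{Z}_{2H})}^{2^s}$. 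Averaging over $n \in I_N$, swapping the finite averages with the $n$-average, and using the shift-invariance of the Ces\`aro mean on $\mathbf{I}$, the inner $n$-average converges by dominated convergence to $A(\underline{h}) := \mathbb{E}_{n \in \mathbf{I}} \prod_{\epsilon} \mathcal{C}^{|\epsilon|} a(n + \epsilon \cdot \underline{h})$, yielding
\begin{equation*}
\lim_{N} \mathbb{E}_{n \in I_N} \norm{S_n a}_{U^s[H]}^{2^s} = \frac{1}{C_H}\, \mathbb{E}_{\underline{h} \in \mathbb{Z}_{2H}^s} E(\underline{h}, H)\, A(\underline{h}),
\end{equation*}
where $E(\underline{h}, H) := \mathbb{E}_{m \in \mathbb{Z}_{2H}} \prod_{\epsilon} \mathbf{1}_{[H]}(m + \epsilon \cdot \underline{h})$.

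Finally I let $H \to \infty$. The non-negative weights $E(\underline{h}, H)/C_H$ form a probability measure on $\mathbb{Z}_{2H}^s$ whose values change by only $O(1/H)$ under bounded integer shifts of $\underline{h}$, so they constitute a F{\o}lner-type averaging sequence on $\mathbb{Z}^s$. Invoking the Host--Kra convergence theorem recalled in Section~\ref{SS:uniformity} (any F{\o}lner averaging of the correlations $A(\underline{h})$ yields the same limit $\norm{a}_{U^s(\mathbf{I})}^{2^s}$), this limit equals $\norm{a}_{U^s(\mathbf{I})}^{2^s}$, and taking $2^s$-th roots gives $\norm{a}_{U^s_*(\mathbf{I})} \leq \norm{a}_{U^s(\mathbf{I})}$, stronger than the factor-$4$ bound stated. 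The logarithmic case follows identically, with Ces\`aro means replaced by logarithmic ones and the shift-invariance of the logarithmic mean on $\mathbf{I}$ (Section~\ref{SS:averages}) used in place of its Ces\`aro counterpart. The step I expect to be most delicate is verifying that $E(\cdot,H)/C_H$ really acts as a F{\o}lner averaging in the Host--Kra sense and keeping $C_H$ uniformly bounded below by some $c_s>0$; fortunately only an upper bound is required, so the modest constant losses one incurs in this step are comfortably absorbed into the factor $4$.
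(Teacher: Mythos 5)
Your proposal goes a genuinely different route from the paper. The paper works with a short-range estimate of Chu--Frantzikinakis type (cited as \cite[Proposition~3.2]{CF11}): for \emph{fixed} small scales $H_1,\ldots,H_s$ it bounds $\norm{a}_{U^s(\mathbb{Z}_H)}^{2^s}$ (up to an $O(\sum H_j^{-1})$ error and a factor $2^s$) by a cube average over $[H_1]\times\cdots\times[H_s]$, then controls the boundary/wrap-around contribution by $O(H^{-1}\sum H_j)$, sends $H\to\infty$ \emph{first}, and only then sends the $H_j\to\infty$. This completely side-steps any need to evaluate a full-range weighted cube average, at the cost of picking up the factor $2^s$ and a factor $2^{2^s}$ from $\norm{\one_{[H]}}_{U^s(\mathbb{Z}_{2H})}\ge 1/2$, which together give the safe constant $4$. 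Your proposal instead tries to compute the exact weighted cube average at scale $H$ and to identify its limit directly via the Host--Kra theorem, which if carried out correctly would yield the sharper constant $1$.

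Two steps of yours, however, are genuine gaps rather than routine details. First, the claim that ``whenever all indicators $\mathbf{1}_{[H]}(m+\epsilon\cdot\underline h)$ are nonzero, the $\bmod\; 2H$ arithmetic coincides with integer arithmetic'' is not true over $\mathbb{Z}_{2H}^{s+1}$ with the standard representatives: for example with $s=2$, $m=2$, $h_1=2H-1$, every vertex reduces into $[H]$ even though the sum $m+h_1$ wraps around. Passing to a symmetric representative system helps, but it is not immediate that wrap-around is then excluded for all configurations (you would in effect be re-deriving the boundary control that the paper does explicitly), and you would also need an extension of $A$ to $\mathbb{Z}^s$ with the correct conjugation pattern before the identification with the Host--Kra cube average can even be stated. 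Second, and more seriously, the final step --- that the trapezoidal weights $E(\cdot,H)/C_H$ give a Host--Kra--compatible F{\o}lner averaging with limit $\norm{a}_{U^s(\mathbf{I})}^{2^s}$ --- is the real content of the argument and is not covered by the statement you cite from Section~\ref{SS:uniformity} (which concerns \emph{unweighted} averages over F{\o}lner sequences of subsets). You flag this as delicate but then say that ``only an upper bound is required, so modest constant losses are comfortably absorbed into the factor $4$.'' That dismissal does not hold up: the quantity $A(\underline h)$ is complex-valued and not pointwise controlled, so crudely upper-bounding the weights gives no useful inequality at all; without the convergence statement you have no bound, not merely a worse constant. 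To make this step rigorous one would have to write $E(\cdot,H)/C_H$ as a positive combination of normalized box indicators with side lengths $\to\infty$ and invoke dominated convergence, at which point you have essentially reconstructed the mechanism the paper already packages via the \cite{CF11} estimate. So the route can be made to work, but at present the two key steps are asserted rather than proved, and the proposed ``absorb the loss into the factor $4$'' escape hatch does not actually patch them.
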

\begin{proof}
	Let $H\in \mathbb{N}$ and $a\colon \mathbb{Z}_H\to \mathbb{C}$ be  bounded by $1$. Then  arguing as in the proof of Proposition~3.2 in \cite{CF11}, we get that for  every $H_1,\ldots, H_s\in \mathbb{N}$ the following estimate holds
	\begin{equation}\label{E:est}
		\norm{a}_{U^s(\mathbb{Z}_H)}^{2^s}\leq 2^s\, \mathbb{E}_{h_1\in [H_1]}\cdots \mathbb{E}_{h_s\in [H_s]} \Big(\mathbb{E}_{h\in [H]}\prod_{\epsilon\in [\![s]\!]}\mathcal{C}^{|\epsilon|} a(h+\epsilon\cdot \underline{h})\Big)+\sum_{j=1}^sH_i^{-1}.
	\end{equation}
	
	Now let $a\colon \mathbb{N}\to \mathbb{C}$ be bounded by $1$. For $H\in \mathbb{N}$ we define
	$a_H\colon  \mathbb{Z}_{2H}\to \mathbb{C}$ by $a_H(n):=a(n)\cdot \one_{[H]}(n)$ for $n\in [2H]$ and we extend it periodically to $\mathbb{Z}_{2H}$.
	Using the definition of $\norm{a}_{U^s[H]}$ in conjunction with the estimate  \eqref{E:est}, applied to  $a_H$, and using that 	
	$\norm{{\bf 1}_{[H]}}_{U^s(\mathbb{Z}_{2H})}\geq \norm{{\bf 1}_{[H]}}_{U^1(\mathbb{Z}_{2H})}
	=\frac{1}{2}$, we deduce  that for  every $H, H_1,\ldots, H_s\in \mathbb{N}$ we have
	$$
	2^{-2^s}\norm{a}_{U^s[H]}^{2^s}\leq 2^{s}\, \mathbb{E}_{h_1\in [H_1]}\cdots \mathbb{E}_{h_s\in [H_s]} \Big(\mathbb{E}_{h\in [2H]}\prod_{\epsilon\in [\![s]\!]}\mathcal{C}^{|\epsilon|} a_H(h+\epsilon\cdot \underline{h})\Big)+\sum_{j=1}^sH_i^{-1},
	$$
	where $\underline{h}=(h_1,\ldots, h_s)$.
	Since  for $h\in [2H]$ and $h_1\in [H_1],\ldots, h_s\in [H_s]$ we have
	$a_H(h+\epsilon\cdot \underline{h})=
	(a\cdot{\bf 1}_{[H]})(h+\epsilon\cdot \underline{h})$ except possibly when
	$h\in[H-(H_1+\cdots+H_s), H]\cup [2H-(H_1+\cdots+H_s), 2H]$, we get
	$$
		2^{-2^s}\norm{a}_{U^s[H]}^{2^s}\leq 2^{s}\, \mathbb{E}_{h_1\in [H_1]}\cdots \mathbb{E}_{h_s\in [H_s]} \Big(\mathbb{E}_{h\in [H]}\prod_{\epsilon\in [\![s]\!]}\mathcal{C}^{|\epsilon|} a(h+\epsilon\cdot \underline{h})\Big)+\sum_{j=1}^sH_i^{-1} +2^sH^{-1}\, \sum_{j=1}^s H_j,
	$$
	where the sums $h+\epsilon\cdot \underline{h}$
	are taken in $\mathbb{N}$.
	Using this estimate for the sequence $S_na$,
	averaging over $n\in I_N$, taking $N\to \infty$, and then making the change of   variables $n\mapsto n-h$,  we get that
	for every $H, H_1,\ldots, H_s\in \mathbb{N}$ we have
	\begin{multline*}
		2^{-2^s}\limsup_{N\to\infty}\mathbb{E}_{n\in I_N} \norm{S_na}_{U^s[H]}^{2^s}\leq\\
		2^{s}\, \mathbb{E}_{h_1\in [H_1]}\cdots \mathbb{E}_{h_s\in [H_s]} \Big(\mathbb{E}_{n\in\mathbf{I}}\prod_{\epsilon\in [\![s]\!]}\mathcal{C}^{|\epsilon|} a(n+\epsilon\cdot \underline{h})\Big)+ \sum_{j=1}^sH_i^{-1} +2^{s}H^{-1}\, \sum_{j=1}^s H_j.
	\end{multline*}
	Finally, recall that  $
	\norm{a}_{U^{s}_*(\mathbf{I})}=\limsup_{H\to\infty}
	\limsup_{N\to\infty}\mathbb{E}_{n\in I_N} \norm{S_na}_{U^s(\mathbb{Z}_H)}
	$. Thus,  if on the last estimate  we take $H\to\infty$ and then let $H_s\to \infty,\ldots,H_1\to\infty,$ we get that
	$$
	2^{-2^s}\norm{a}_{U^{s}_*(\mathbf{I})}^{2^s}\leq 2^{s}\, \mathbb{E}_{h_1\in \mathbb{N}}\cdots \mathbb{E}_{h_s\in \mathbb{N}}
	\Big(\mathbb{E}_{n\in \mathbf{I}}  \prod_{\epsilon\in [\![s]\!]}\mathcal{C}^{|\epsilon|} a(n+\epsilon\cdot \underline{h})\Big)=2^{s}\,\norm{a}_{U^{s}(\mathbf{I})}^{2^s}.
	$$
	This proves the first   estimate. The proof of the second estimate
	is similar.
\end{proof}

\subsection{Multiplicative functions and strong aperiodicity} \label{SS:aperiodic}
A function $f\colon \mathbb{N}\to \mathbb{C}$ is called \emph{multiplicative} if
$$
f(mn)=f(m)f(n) \ \text{ whenever } \  (m,n)=1.
$$
It is called \emph{completely multiplicative} if  the previous identity holds for every $m,n\in \mathbb{N}$.
We   let
$$
\mathcal{M}:=\{f\colon \mathbb{N}\to \mathbb{C} \text{ is multiplicative, bounded, and  } |f(p)|=1 \text{ for every } p\in \P\}.
$$
A \emph{Dirichlet character} is a periodic
completely multiplicative function $\chi$  with $\chi(1)=1$.	
We say that   $f\in \mathcal{M}$ is \emph{aperiodic} (or \emph{non-pretentious} following \cite{GS16})
if
$$\lim_{N\to\infty}\mathbb{E}_{n\in [N]}f(an+b)=0, \quad \text{  for all } a,b\in \mathbb{N},
$$
or equivalently, if
$\lim_{N\to\infty}\mathbb{E}_{n\in [N]}f(n)\, \chi(n)=0$ for every Dirichlet character $\chi$.

The uniformity result stated in  Theorem~\ref{T:uniformity}
holds for a class of multiplicative functions that satisfy a condition introduced in \cite{MRT15}
which is somewhat stronger than aperiodicity. In order to state it  we need the notion of the  distance between two multiplicative functions defined as in  \cite{GS16}:
\begin{definition}
	Let $\P$ be the set of primes.
	We let $\D\colon \mathcal{M}\times \mathcal{M}\to [0,\infty]$ be given by
	$$
	\D(f,g)^2=\sum_{p\in \P} \frac{1}{p}\,\big(1-\Re\big(f(p) \overline{g(p)}\big)\big)
	$$
	where $\Re(z)$ denotes the real part of a compelx number $z$. 
	We also let $\D\colon \mathcal{M}\times \mathcal{M}\times \mathbb{N} \to [0,\infty]$ be given by
	$$
	\D(f,g;N)^2=\sum_{p\in \P\cap [N]} \frac{1}{p}\,\bigl(1-\Re\bigl(f(p) \overline{g(p)}\bigr)\bigr)
	$$
	and $M\colon \mathcal{M}\times \mathbb{N} \to [0,\infty)$ be given by
	$$
	M(f;N):=\min_{|t|\leq N} \D(f, n^{it};N)^2.
	$$
\end{definition}

A celebrated  theorem of Hal\'asz~\cite{Hal68}
states that a multiplicative function  $f\in \mathcal{M}$ has zero  mean value  if and only if
for every $t\in \R$  we either have $\D(f, n^{it})=\infty$
or
$
f(2^k)= -2^{ikt}$ for all $k\in \mathbb{N}$.
For our purposes we need information on averages of multiplicative functions  taken on typical short intervals. Such results were obtained   in \cite{MR15, MRT15}, under conditions that motivate the following definition:
\begin{definition}\label{D:uniformly}
	The multiplicative function $f\in \mathcal{M}$ is
	{\em strongly  aperiodic}
	if $M(f\cdot\chi;N)\to \infty$ as $N
	\to \infty$ for every Dirichlet character $\chi$.
\end{definition}
Note  that strong aperiodicity implies aperiodicity. The converse is
not in general true (see  \cite[Theorem~B.1]{MRT15}), but it is
true for (bounded) real valued  multiplicative functions (see  \cite[Appendix~C]{MRT15}). In particular,  the  Liouville and the M\"obius function are strongly aperiodic.
Furthermore,  if $f\in \mathcal{M}$  satisfies
\begin{enumerate}
	\item $f(p)$ is a $d$-th root of unity
	for all but finitely many primes $p$; and
	
	\item 	$\D(f, \chi)=\infty$ for every Dirichlet character $\chi$,
\end{enumerate}
then $f$ is strongly aperiodic (see \cite[Proposition~6.1]{F16}).

We will need the following result;
a quantitative variant  of which is implicit in \cite{MRT15}
(the stated version  is also deduced from
\cite[Theorem A.1]{MRT15} in \cite[Theorem~4.1]{F16}):
\begin{proposition}[Matom\"aki, Radziwi{\l}{\l}, Tao~\cite{MRT15}]
	\label{P:stronaperiodic}
	Let $f\in \mathcal{M}$ be a strongly  aperiodic multiplicative function that
	admits
	correlations  for Ces\`aro averages  on the  sequence of intervals ${\bf I}:=([N_k])_{k\in\mathbb{N}}$ with $N_k\to \infty$.
	Then
	$$
	\lim_{H\to \infty}\mathbb{E}_{h\in [H]} |\mathbb{E}_{n\in {\bf I}} \, f(n+h)\cdot \overline{f(n)} |=0.
	$$
	A similar statement also holds for logarithmic averages on ${\bf I}$.
\end{proposition}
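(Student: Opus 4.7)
The plan is to reduce Proposition~\ref{P:stronaperiodic} to a weighted short-interval $L^2$ estimate for strongly aperiodic multiplicative functions, which is essentially \cite[Theorem~A.1]{MRT15} (and is extracted in the form we need as \cite[Theorem~4.1]{F16}). Set $c_h:=\mathbb{E}_{n\in\mathbf{I}}\, f(n+h)\overline{f(n)}$; these means exist by hypothesis, and the goal is to show $\lim_{H\to\infty}\mathbb{E}_{h\in[H]}|c_h|=0$. Since the average over $h\in[H]$ is a finite sum, I would first commute it with the limit in $k$ defining $c_h$, obtaining, for each fixed $H$,
\[
\mathbb{E}_{h\in[H]}|c_h|=\lim_{k\to\infty}\mathbb{E}_{h\in[H]}\bigl|\mathbb{E}_{n\in[N_k]}\, f(n+h)\overline{f(n)}\bigr|,
\]
which reduces the problem to a uniform finitary bound.

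Next I would linearize the absolute value: for each $k$, choose unimodular scalars $\epsilon_h=\epsilon_h^{(k)}$ so that $\epsilon_h\,\mathbb{E}_{n\in[N_k]}f(n+h)\overline{f(n)}=|\mathbb{E}_{n\in[N_k]}f(n+h)\overline{f(n)}|$. Swapping the order of summation and applying Cauchy--Schwarz, together with $|f|\leq 1$, yields
\[
\mathbb{E}_{h\in[H]}\bigl|\mathbb{E}_{n\in[N_k]}\, f(n+h)\overline{f(n)}\bigr|\leq \Bigl(\mathbb{E}_{n\in[N_k]}\bigl|\mathbb{E}_{h\in[H]}\epsilon_h\, f(n+h)\bigr|^2\Bigr)^{1/2}.
\]
It therefore suffices to establish that
\[
\lim_{H\to\infty}\limsup_{k\to\infty}\sup_{|\epsilon_h|\leq 1}\mathbb{E}_{n\in[N_k]}\Bigl|\mathbb{E}_{h\in[H]}\epsilon_h\, f(n+h)\Bigr|^2=0,
\]
i.e., that typical short-interval averages of $f$, weighted by arbitrary bounded coefficients, vanish in $L^2$.

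The final step is to invoke the Matom\"aki--Radziwi{\l}{\l}--Tao theorem for strongly aperiodic multiplicative functions: \cite[Theorem~A.1]{MRT15} shows that under strong aperiodicity the short-interval averages $\frac{1}{H}\sum_{h\in[H]}f(n+h)$ are negligible in $L^2(n)$ as $H\to\infty$, and a uniform version allowing arbitrary unimodular weights follows from the same argument (this is essentially the content of \cite[Theorem~4.1]{F16}). The same scheme works verbatim for logarithmic averages, replacing $\mathbb{E}_{n\in[N_k]}$ by $\mathbb{E}^{\log}_{n\in[N_k]}$ throughout. The main obstacle is of course this deep short-interval estimate itself, which rests on the Matom\"aki--Radziwi{\l}{\l} structure theorem for bounded multiplicative functions on almost all short intervals together with Hal\'asz-type mean value bounds; we use it as a black box. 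The role of strong aperiodicity, rather than mere aperiodicity, is to exclude conspiracies of $f$ with Dirichlet characters twisted by $n^{it}$ which could otherwise prevent the weighted $L^2$ short-sum bound from holding uniformly in the coefficients $(\epsilon_h)$.
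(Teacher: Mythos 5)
Your first two steps are fine: the average over $h\in[H]$ commutes with the limit in $k$ because the $h$-sum is finite, and after choosing phases $\epsilon_h$ (which can in fact be taken independent of $k$, namely $\epsilon_h=\overline{c_h}/|c_h|$ with $c_h:=\mathbb{E}_{n\in\mathbf{I}}\,f(n+h)\overline{f(n)}$) the Cauchy--Schwarz step is valid. The gap is in the final invocation. You need
\[
\lim_{H\to\infty}\limsup_{k\to\infty}\,\mathbb{E}_{n\in[N_k]}\Bigl|\mathbb{E}_{h\in[H]}\epsilon_h\, f(n+h)\Bigr|^2=0
\]
for bounded weights $(\epsilon_h)$ over which you have no control, and this is \emph{not} delivered by the Matom\"aki--Radziwi{\l}{\l}--Tao short-interval $L^2$ estimate. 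What \cite[Theorem~A.1]{MRT15} (and its repackaging in \cite[Theorem~4.1]{F16}) gives is the estimate for a single \emph{linear exponential phase} $e(h\alpha)$ (even with a $\sup_{\alpha}$ inside the $n$-average); these bounds come from Dirichlet polynomial methods and do not upgrade to generic bounded weights. Indeed, expanding the square and letting $k\to\infty$, the displayed quantity equals $\mathbb{E}_{h,h'\in[H]}\,\epsilon_h\overline{\epsilon_{h'}}\,c_{h-h'}$ (with $c_{-j}:=\overline{c_j}$), and its smallness for \emph{arbitrary} bounded $\epsilon$ is essentially a statement about all pairwise correlations $c_j$ simultaneously, i.e.\ a Chowla-type assertion, not something MRT proves. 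Attempting to reduce a general $\epsilon$ to exponentials via Fourier expansion on $\mathbb{Z}_H$ costs $\Vert\widehat{\epsilon}\Vert_{\ell^1(\mathbb{Z}_H)}\asymp\sqrt H$ and does not close.

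The fix is to not linearize the modulus at all. The sequence $(c_h)_{h\ge 0}$, extended by $c_{-h}:=\overline{c_h}$, is positive definite (in the Furstenberg system $c_h=\int T^hF\cdot\overline F\,d\mu$), so $c_h=\int_{\mathbb T}e(h\theta)\,d\sigma(\theta)$ for a finite positive measure $\sigma$. Then $\mathbb{E}_{h\in[H]}|c_h|\le\bigl(\mathbb{E}_{h\in[H]}|c_h|^2\bigr)^{1/2}$, and by Wiener's lemma $\lim_{H\to\infty}\mathbb{E}_{h\in[H]}|c_h|^2=\sum_\theta\sigma(\{\theta\})^2$. It therefore suffices to show $\sigma$ has no atoms, i.e.\ that for every fixed $\theta$, $\lim_{H\to\infty}\mathbb{E}_{h\in[H]}c_h\,e(-h\theta)=0$. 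Unwinding $c_h$ and applying Cauchy--Schwarz in $n$ exactly as in your second step reduces this to
\[
\lim_{H\to\infty}\limsup_{k\to\infty}\,\mathbb{E}_{n\in[N_k]}\Bigl|\mathbb{E}_{h\in[H]}f(n+h)\,e(-h\theta)\Bigr|^2=0,
\]
which is precisely the exponential-weight case covered by \cite[Theorem~A.1]{MRT15} for strongly aperiodic $f$; this is where strong aperiodicity (rather than mere aperiodicity) is used. The logarithmic case is identical, replacing $\mathbb{E}_{n\in[N_k]}$ by $\mathbb{E}^{\log}_{n\in[N_k]}$.
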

\begin{remark}
	It follows from \cite[Theorem~B.1]{MRT15} that strong aperiodicity cannot be replaced by aperiodicity; in particular,  there exist  an aperiodic multiplicative function $f\in \mathcal{M}$, a positive constant $c$,  and a sequence of intervals ${\bf I}:=([N_k])_{k\in\mathbb{N}}$ with $N_k\to \infty$, such that
	$$
	|\mathbb{E}_{n\in {\bf I}} \, f(n+h)\cdot \overline{f(n)} |\geq c, \quad \text{ for every } h\in \mathbb{N}.
	$$	
\end{remark}

\subsection{Local uniformity implies the  Elliott conjecture} \label{SS:Tao'}
In this subsection we prove  Theorem~\ref{T:Tao'} by adapting  the argument in \cite{Tao15} which deals with the case where all the multiplicative functions  are equal to  the Liouville or the M\"obius function.
In what follows, if $(a(p))_{p\in\P}$ is a sequence indexed by the primes, we denote by
$
\mathbb{E}_{p\in\P}\, a(p)$ the limit $\lim_{N\to\infty}\frac {\log N}N\sum_{p\leq N}a(p)
$
if it exists.

Our starting point is  the following identity which is implicit in \cite{Tao15} and its proof was   sketched in \cite[Appendix~C]{FH17} (see also \cite[Theorem~3.6]{TT17} for a variant of this identity):
\begin{proposition}\label{P:Tao}
	Let ${\bf I}=([N_k])_{k\in\mathbb{N}}$ be a sequence of intervals with $N_k\to \infty$,  $(c_p)_{p\in\P}$ be a bounded sequence of complex numbers, $s\in \mathbb{N}$,  $a_1,\ldots, a_s\in \ell^\infty(\mathbb{N})$, and    $n_1,\ldots, n_s\in \mathbb{N}$.
	Then,
	assuming that on the left and  right hand side below the   limit $\mathbb{E}^{\log}_{m\in {\bf I}}$ exists for every $p\in\P$ and the limit  $\mathbb{E}_{p\in\P}$  exists, we have
	$$
	\mathbb{E}_{p\in\P}\, c_p\,
	\big(\mathbb{E}^{\log}_{m\in {\bf I}}\,\prod_{j=1}^s a_j(pm+pn_j)\big)=
	\mathbb{E}_{p\in \P}\,
	c_p\,
	\big(\mathbb{E}^{\log}_{m\in {\bf I}}\, \prod_{j=1}^s a_j(m+pn_j)\big).
	$$
\end{proposition}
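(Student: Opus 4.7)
The plan is to reduce the identity to a cancellation statement via a change of variables in the inner logarithmic sum, and then to establish that cancellation by an entropic decoupling across primes, following Tao's argument in \cite{Tao15}.

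First I would fix a prime $p$ and perform the substitution $n=pm$ in the inner sum on the left, using the algebraic identity $pm+pn_j=p(m+n_j)=n+pn_j$. Writing $L_N:=\sum_{m\leq N}1/m$ and $F_p(n):=\prod_{j=1}^s a_j(n+pn_j)$, this gives
$$\mathbb{E}^{\log}_{m\in[N_k]}\prod_{j=1}^s a_j(pm+pn_j)=\frac{p}{L_{N_k}}\sum_{\substack{n\leq pN_k\\ p\mid n}}\frac{1}{n}F_p(n)=p\cdot\mathbb{E}^{\log}_{n\in[N_k]}\one_{p\mid n}F_p(n)+O\bigl(\tfrac{\log p}{\log N_k}\bigr),$$
the error coming from the boundary range $n\in(N_k,pN_k]$. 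Letting $k\to\infty$ for each fixed $p$, and combining with $B_p=\lim_k\mathbb{E}^{\log}_{n\in[N_k]}F_p(n)$, the proposition becomes equivalent to the vanishing statement
$$\mathbb{E}_{p\in\P}\,c_p\,\lim_{k\to\infty}\mathbb{E}^{\log}_{n\in[N_k]}\bigl(p\,\one_{p\mid n}-1\bigr)F_p(n)=0.$$

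To establish this, I would first note the orthogonality relation $\mathbb{E}^{\log}_n(p_1\one_{p_1\mid n}-1)(p_2\one_{p_2\mid n}-1)=0$ for distinct primes $p_1\neq p_2$, which gives a Tur\'an--Kubilius-type control on the second moment of $\sum_{p\leq P}c_p(p\one_{p\mid n}-1)$. A direct Cauchy--Schwarz bound based solely on this orthogonality loses a factor $\sqrt{\log P}$ after dividing by the normalising factor $P/\log P$ in $\mathbb{E}_{p\in\P}$, and so is insufficient. The resolution, following Tao's \emph{entropy decrement} argument in \cite{Tao15}, is to pass to a subsequence of primes along which the joint logarithmic distribution of $(n\bmod p,\,F_p(n))$ approaches a product measure; this decouples $\one_{p\mid n}$ from $F_p(n)$ on logarithmic average and yields $p\cdot\mathbb{E}^{\log}_n\one_{p\mid n}F_p(n)\approx\mathbb{E}^{\log}_n F_p(n)$ on prime average, which is exactly the required identity.

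The main technical obstacle is this entropic decoupling. The iterated-limit structure $\lim_{P\to\infty}\lim_{k\to\infty}$ is crucial: the inner limit must be taken first, allowing the scale of the logarithmic average to grow much faster than $P$, so that the entropy argument applies uniformly over the primes $p\leq P$ and the $p$-dependence of both the sieve weight $p\one_{p\mid n}-1$ and the sequence-dependent factor $F_p(n)$ can be handled simultaneously. The full details of this step, which is essentially Tao's twin-log-Chowla technique adapted to general bounded sequences, appear in \cite[Appendix~C]{FH17}.
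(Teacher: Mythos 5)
Your proposal traces exactly the route the paper itself points to: the paper gives no self-contained proof of this proposition but attributes it to Tao's paper and to the sketch in the appendix of the Frantzikinakis–Host article, both of which proceed by the change of variables $n=pm$ in the logarithmic average followed by the entropy decrement argument to decouple the sieve weight $p\,\one_{p\mid n}-1$ from $F_p(n)$ on logarithmic average over $n$. Your reformulation into the vanishing statement
$\mathbb{E}_{p\in\P}\, c_p\, \lim_{k}\mathbb{E}^{\log}_{n\in[N_k]}(p\,\one_{p\mid n}-1)F_p(n)=0$,
and your computation of the $O(\log p/\log N_k)$ boundary error, are correct; your observation that the pairwise orthogonality of the sieve weights yields only a $\sqrt{\log P}$-lossy Tur\'an--Kubilius bound, and that the entropy argument is what closes the gap, is also the right diagnosis.

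Two small imprecisions are worth flagging, though neither affects the correctness of the approach. First, the entropy decrement argument does not pass to a \emph{subsequence} of primes; it shows that the set of ``bad'' scales (at which the conditional mutual information between $(F_p(n), \dots, F_p(n+H))$ and $n\bmod p$ fails to be small) has bounded total logarithmic measure, so that on a set of primes of full relative logarithmic density the decoupling $p\,\mathbb{E}^{\log}_n\one_{p\mid n}F_p(n)\approx \mathbb{E}^{\log}_n F_p(n)$ holds. Second, since here $F_p(n)=\prod_j a_j(n+pn_j)$ carries a $p$-dependence through the shifts $pn_j$ (not just through the factor $\prod_j \overline{a_j(p)}$ appearing in the multiplicative application), the entropy argument has to be run with a word length $H$ that dominates $P\cdot\max_j n_j$ so that all the shifted values $a_j(n+pn_j)$ for $p\leq P$ are captured in the word tuple; this is handled in the source you cite, but is worth saying explicitly. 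With those caveats your outline is faithful to the argument the paper defers to.
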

We deduce from this the following identity for multiplicative functions:
\begin{corollary}\label{C:Tao}
	Let ${\bf I}=([N_k])_{k\in\mathbb{N}}$ be a sequence of intervals with $N_k\to \infty$,  $s\in \mathbb{N}$,  $f_1,\ldots, f_s \in \mathcal{M}$,  and  $n_1,\ldots, n_s\in \mathbb{N}$.
	Suppose that
	for every $p\in\P$   on the left and  right hand side below the   limit $\mathbb{E}^{\log}_{m\in {\bf I}}$ exists and 	the limit  $\mathbb{E}_{p\in \P}$ exists. Furthermore, suppose  that the limit $\mathbb{E}^{\log}_{m\in {\bf I}}\,\prod_{j=1}^s f_j(pm+pn_j)$ exists for every $p\in \P$.\footnote{This assumption can be omitted using a subsequential argument but we will not need this.} Then we have
   $$
	\mathbb{E}^{\log}_{m\in {\bf I}}\,\prod_{j=1}^s f_j(m+n_j)=
	\mathbb{E}_{p\in \P}\,
	c_p\,
	\big(\mathbb{E}^{\log}_{m\in {\bf I}}\, \prod_{j=1}^s f_j(m+pn_j)\big).
	$$
	where $c_p:=\prod_{j=1}^s \overline{f_j(p)}$ for  $p\in\P$.
\end{corollary}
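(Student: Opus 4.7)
The plan is to apply Proposition~\ref{P:Tao} directly to the sequences $a_j=f_j$ with the stated weights $c_p=\prod_{j=1}^s \overline{f_j(p)}$, and then exploit the (near) multiplicativity of each $f_j$ to collapse the left hand side of that proposition to the desired quantity $\mathbb{E}^{\log}_{m\in {\bf I}}\,\prod_{j=1}^s f_j(m+n_j)$.

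The first step is the pointwise identity. For a prime $p$ and an integer $m$ with $(p,m+n_j)=1$ for every $j$, multiplicativity of $f_j$ gives $f_j(pm+pn_j)=f_j(p(m+n_j))=f_j(p)\,f_j(m+n_j)$. Since each $f_j\in\mathcal{M}$ is bounded (by a constant that we may absorb into the $O$-notation) and $|f_j(p)|=1$, and since the set of $m\in\mathbb{N}$ failing the coprimality condition is a union of at most $s$ residue classes modulo $p$ (hence has logarithmic density $O_s(1/p)$ on any F\o{}lner sequence of intervals of the form $[N_k]$), we obtain
\begin{equation*}
\mathbb{E}^{\log}_{m\in {\bf I}}\prod_{j=1}^s f_j(pm+pn_j)=\Bigl(\prod_{j=1}^s f_j(p)\Bigr)\,\mathbb{E}^{\log}_{m\in {\bf I}}\prod_{j=1}^s f_j(m+n_j)+O_s(1/p),
\end{equation*}
where the error bound is uniform in $p$. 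Multiplying both sides by $c_p$ and using $c_p\prod_{j=1}^s f_j(p)=\prod_{j=1}^s|f_j(p)|^2=1$ yields
\begin{equation*}
c_p\,\mathbb{E}^{\log}_{m\in {\bf I}}\prod_{j=1}^s f_j(pm+pn_j)=\mathbb{E}^{\log}_{m\in {\bf I}}\prod_{j=1}^s f_j(m+n_j)+O_s(1/p).
\end{equation*}

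The second step is to average over primes using the weighted mean $\mathbb{E}_{p\in\P}$. By Mertens' estimate $\sum_{p\le N} 1/p\sim \log\log N$, one has $\frac{\log N}{N}\sum_{p\le N}1/p\to 0$, so the error contributes $\mathbb{E}_{p\in\P}\,O_s(1/p)=0$. Since the factor $\mathbb{E}^{\log}_{m\in {\bf I}}\prod_{j=1}^s f_j(m+n_j)$ is a constant (independent of $p$) and $\mathbb{E}_{p\in\P}\,c_p$ exists by hypothesis — indeed, the inner limit on the right exists for every $p$, so by absorbing the constant we may simply take $\mathbb{E}_{p\in\P}$ of both sides — we arrive at
\begin{equation*}
\mathbb{E}_{p\in\P}\,c_p\,\mathbb{E}^{\log}_{m\in {\bf I}}\prod_{j=1}^s f_j(pm+pn_j)=\mathbb{E}^{\log}_{m\in {\bf I}}\prod_{j=1}^s f_j(m+n_j).
\end{equation*}

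The third step is to invoke Proposition~\ref{P:Tao} with $a_j=f_j$: its left hand side is precisely the left hand side of the displayed identity above, while its right hand side is $\mathbb{E}_{p\in \P}\,c_p\,\mathbb{E}^{\log}_{m\in {\bf I}}\prod_{j=1}^s f_j(m+pn_j)$, which is what we wanted. Stringing the two identities together produces the conclusion.

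There is no serious obstacle here; the potential snag is only bookkeeping, namely verifying that the existence hypotheses of the corollary (existence of the inner log-averages for each $p$ and of the outer prime average on each side) are exactly what is needed to apply Proposition~\ref{P:Tao} and to push the uniform $O_s(1/p)$ error through $\mathbb{E}_{p\in\P}$. The use of $|f_j(p)|=1$, which is part of the definition of $\mathcal{M}$, is essential — it is what makes the factor $c_p\prod_j f_j(p)$ collapse to $1$ and thereby allows us to read off the corollary directly from the proposition.
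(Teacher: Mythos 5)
Your proof is correct and follows essentially the same route as the paper's: use multiplicativity of each $f_j$ (valid except on $O_s(1/p)$ density of $m$), multiply by $c_p$, note $c_p\prod_j f_j(p)=1$ via $|f_j(p)|=1$, average over $p$ so the $O_s(1/p)$ error washes out, and then invoke Proposition~\ref{P:Tao}. The paper simply writes the pointwise step with $c_p$ already on the right-hand side rather than multiplying it in afterwards; this is a cosmetic difference.
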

\begin{proof}
	For $p\in \P$ and $j=1,\ldots s$, we have $f_j(p(m+n_j))=f_j(p) \, f_j(m+n_j)$ unless $m\equiv -n_j\pmod{p}$. Hence,
	$$
	\mathbb{E}^{\log}_{m\in {\bf I}}\,\prod_{j=1}^s f_j(m+n_j)=c_p\,  	\mathbb{E}^{\log}_{m\in {\bf I}}\,\prod_{j=1}^s f_j(pm+pn_j)+O(1/p)
	$$
	where the implicit constant depends only on $s$ and on the sup-norm of $f_1,\ldots, f_s$.
	Averaging over $p\in \P$ we get
	$$
	\mathbb{E}^{\log}_{m\in {\bf I}}\,\prod_{j=1}^s f_j(m+n_j)=\mathbb{E}_{p\in\P}\,  c_p\,  \big(\mathbb{E}^{\log}_{m\in {\bf I}}\,\prod_{j=1}^s f_j(pm+pn_j)\big).
	$$
	Applying Proposition~\ref{P:Tao}, we get the asserted identity.
\end{proof}
We will also need the following multiple ergodic theorem:
\begin{proposition}\label{P:ergodic}
	Let $(X,\mathcal{X}, \mu,T)$ be a system, $s\geq 2$,  $F_1,\ldots, F_s\in L^\infty(\mu)$, and $n_1,\ldots, n_s\in \mathbb{N}$ be distinct integers.
	Suppose that $\nnorm{F_1}_s=0$. Then
	$$
	\mathbb{E}_{p\in \P}\Big|\int \prod_{j=1}^s T^{pn_j}F_j\, d\mu\Big|=0.
	$$
\end{proposition}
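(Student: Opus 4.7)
The plan is to apply Cauchy--Schwarz, pass to the product system $(X\times X, T\times T, \mu\times\mu)$, and invoke convergence of multiple ergodic averages along primes to conclude that after integrating $|I(p)|^2$ against $F_1\otimes\overline{F_1}$, the result vanishes by the orthogonality hypothesis $\nnorm{F_1}_s=0$.

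Setting $I(p):=\int\prod_{j=1}^s T^{pn_j}F_j\,d\mu$ and writing $|I(p)|^2=I(p)\overline{I(p)}$ as a double integral, Fubini together with the $(T\times T)$-invariance of $\mu\times\mu$ (shifting all exponents by $-pn_1$) give
$$
|I(p)|^2=\int_{X\times X}(F_1\otimes\overline{F_1})\cdot \prod_{j=2}^{s}(T\times T)^{p(n_j-n_1)}(F_j\otimes\overline{F_j})\,d(\mu\times\mu).
$$
Since $(\mathbb{E}_{p\in\P}|I(p)|)^2\leq \mathbb{E}_{p\in\P}|I(p)|^2$ by Cauchy--Schwarz, it suffices to show that this right-hand side averaged over $p\in\P$ is zero. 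Equivalently, if $A$ denotes the $L^2(\mu\times\mu)$-limit of the $(s-1)$-term prime-averaged product $\mathbb{E}_{p\in\P}\prod_{j=2}^{s}(T\times T)^{p(n_j-n_1)}(F_j\otimes\overline{F_j})$, the goal reduces to showing $\int(F_1\otimes\overline{F_1})\cdot A\,d(\mu\times\mu)=0$.

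For the existence of $A$ and the identification of its characteristic factor I would invoke the prime version of the Host--Kra convergence theorem: for any system $(Y,\mathcal{Y},\nu,S)$, any $s-1$ distinct integer shifts $a_2,\ldots,a_s$, and bounded functions $H_2,\ldots,H_s$, the prime-averaged product $\mathbb{E}_{p\in\P}\prod_{j=2}^{s} S^{pa_j}H_j$ converges in $L^2(\nu)$ to the same limit as the corresponding integer Ces\`aro average, and by Theorem~\ref{T:HK} this limit is measurable with respect to the Host--Kra factor $\mathcal{Z}_{s-2}$. Applied to $(X\times X, T\times T, \mu\times\mu)$, this yields $A\in L^2(\mathcal{Z}_{s-2}^{X\times X})$. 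The final step relies on the identity $\nnorm{F}_{s,T}^{2^s}=\nnorm{F\otimes\overline{F}}_{s-1,T\times T}^{2^{s-1}}$, which follows by unfolding the recursive definitions of the Host--Kra seminorms and applying the mean ergodic theorem on $X\times X$ (reducing to the ergodic case by ergodic decomposition if $T$ is not ergodic). Applied to $F_1$, this converts the hypothesis $\nnorm{F_1}_s=0$ into $F_1\otimes\overline{F_1}\perp\mathcal{Z}_{s-2}^{X\times X}$, which combined with $A\in L^2(\mathcal{Z}_{s-2}^{X\times X})$ gives $\int(F_1\otimes\overline{F_1})\cdot A\,d(\mu\times\mu)=0$, as desired.

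The principal obstacle is the prime-averaged convergence and characteristic-factor identification used to produce and describe $A$: this goes beyond the ingredients explicitly cited in the excerpt and rests on the Green--Tao equidistribution of primes on nilmanifolds combined with the Host--Kra structure theorem. A secondary technical point is verifying the seminorm identity $\nnorm{F}_s^{2^s}=\nnorm{F\otimes\overline{F}}_{s-1}^{2^{s-1}}$ for non-ergodic systems, which is handled via ergodic decomposition together with the straightforward direct computation on each ergodic component.
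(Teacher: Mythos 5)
Your overall strategy coincides with the paper's: pass to the product system $(X\times X, T\times T, \mu\times\mu)$ via Cauchy--Schwarz, invoke characteristic-factor theory for multiple ergodic averages along primes, and conclude through the relation between $\nnorm{F_1}_s$ and $\nnorm{F_1\otimes\overline{F_1}}_{s-1,T\times T}$. However, the specific convergence claim you rely on---that $\mathbb{E}_{p\in\P}\prod_{j=2}^s S^{pa_j}H_j$ converges in $L^2$ \emph{to the same limit} as the integer Ces\`aro average---is false whenever the system has nontrivial rational spectrum (already for one term: if $S$ has an eigenvalue $\mathrm{e}(1/q)$ with eigenfunction $\chi$, then $\mathbb{E}_{p\in\P}S^p\chi$ is a nonzero multiple of $\chi$, not $\mathbb{E}(\chi\,|\,\mathcal I)=0$). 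This is precisely the discrepancy the $W$-trick is designed to absorb, and it is why the paper reduces the prime average to $\lim_{W\to\infty}\mathbb{E}_{(k,W)=1}\mathbb{E}_{n}(\cdots)$ following \cite[Theorem~1.3]{FHK2} and only then applies Leibman's characteristic-factor result \cite[Theorem~A.8]{Lei05}. For $s\geq3$ your error is harmless because the true limit is still $\mathcal{Z}_{s-2}$-measurable (the rational Kronecker factor sits inside $\mathcal{Z}_1\subset\mathcal{Z}_{s-2}$), but for $s=2$ your claim $A\in L^2(\mathcal{Z}_0)$ fails: the limit of $\mathbb{E}_{p\in\P}(T\times T)^{pa}(F_2\otimes\overline{F_2})$ is measurable with respect to the rational Kronecker factor of $T\times T$, not the invariant $\sigma$-algebra, so you need the additional (true but unproved here) fact that $\nnorm{F_1}_2=0$ forces $F_1\otimes\overline{F_1}$ to be orthogonal to that factor.

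Two secondary points. First, you invoke full $L^2$-norm convergence of multiple ergodic averages over primes, a heavier theorem than necessary; the quantity being averaged is a scalar, so the paper needs only the vanishing criterion after the $W$-trick. Second, the identity $\nnorm{F}_{s,T}^{2^s}=\nnorm{F\otimes\overline F}_{s-1,T\times T}^{2^{s-1}}$ holds for \emph{ergodic} $T$ (where $\mathbb{E}(G\,|\,\mathcal I)=\int G\,d\mu$), but the ergodic-decomposition reduction you sketch for the general case does not go through cleanly: $\mu\times\mu$ disintegrates into $\mu_x\times\mu_y$ over \emph{pairs} $(x,y)$, of which the diagonal has measure zero, so you cannot reduce to ergodic components of $T$ componentwise. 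The one-sided inequality $\nnorm{F_1\otimes\overline{F_1}}_{s-1,T\times T}\leq\nnorm{F_1}_{s,T}^2$ holds directly for any system (via $|\int G\,d\mu|^2\leq\norm{\mathbb{E}(G\,|\,\mathcal I)}_{L^2(\mu)}^2$ in the last inductive step) and is exactly what the paper uses; you should use it too.
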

\begin{proof}
	It suffices to show that
	$$
	\mathbb{E}_{p\in \P}\int \prod_{j=1}^s (T\times T)^{pn_j}(F_j\otimes \overline{F_j})\, d(\mu\times\mu)=0.
	$$
	
	For $w\in \mathbb{N}$ let  $W$ denote the product of the first $w$ primes.
	Following the proof of \cite[Theorem~1.3]{FHK2} (which uses the Gowers uniformity of the $W$-tricked von Mangoldt function  established in   \cite{GT10b, GT12, GTZ12c}) we get that
	the average on the left hand side  is equal to
	$$
	\lim_{W\to \infty}\mathbb{E}_{(k,W)=1}\, \mathbb{E}_{n\in\mathbb{N}}\, \int \prod_{j=1}^s (T\times T)^{(nW+k)n_j}(F_j\otimes \overline{F_j})\,
	d(\mu\times\mu),
	$$
	where the average
	$\mathbb{E}_{(k,W)=1}$ is taken over those $k\in \{1,\ldots, W-1\}$ such that $(k,W)=1$. In order to show that this limit vanishes, it  suffices to show that for all  distinct  $l_1,\ldots, l_s\in \mathbb{N}$ and for  arbitrary
	$k_1,\ldots, k_s\in \mathbb{N}$ we have
	$$
	\mathbb{E}_{n\in\mathbb{N}}\, \int \prod_{j=1}^s (T\times T)^{l_jn+k_j}(F_j\otimes \overline{F_j})\,
	d(\mu\times\mu)=0.
	$$
	It follows from \cite[Theorem~A.8]{Lei05} (for $s\geq 3$, but a simple argument works for $s=2$) that in order to establish this identity it suffices to show that $\nnorm{F_1\otimes \overline{F_1}}_{s-1,T\times T}=0$. But this follows since   $\nnorm{F_1\otimes \overline{F_1}}_{s-1,T\times T}\leq \nnorm{F_1}_{s,T}^2$ and  by assumption we have $\nnorm{F_1}_{s,T}=0$. This completes the proof.
\end{proof}

\begin{proof}[Proof of Theorem~\ref{T:Tao'}]
	Arguing by contradiction, suppose that  the  conclusion fails. Then there exist  multiplicative functions $f_2,\ldots, f_s\in \mathcal{M}$,  distinct  $n_1,\ldots, n_s\in \mathbb{N}$,
	and a subsequence $(N_k')_{k\in \mathbb{N}}$  of $(N_k)_{k\in\mathbb{N}}$,   such that for   ${\bf I}':=([N_k'])_{k\in\mathbb{N}}$  the limit
	$\mathbb{E}^{\log}_{m\in \mathbf{I}'}\, \prod_{j=1}^s  g_j(l_j m+ k_j)$  exists for  every $s, k_1,\ldots, k_s,l_1,\ldots, l_s\in \mathbb{N}$, and $g_1,\ldots, g_s\in \{a_1,\ldots,a_s, \overline{a_1},\ldots, \overline{a_s} \}$,  and such that
	$$
	\mathbb{E}^{\log}_{m\in \mathbf{I}'}\, \prod_{j=1}^s  f_j(m+ n_j)\neq 0.
	$$
	Using
	Corollary~\ref{C:Tao},  we will get a contradiction if we show that
	\begin{equation}\label{E:iszero}
		\mathbb{E}_{p\in \P}
		\big|\mathbb{E}^{\log}_{m\in {\bf I'}}\, \prod_{j=1}^s f_j(m+pn_j)\big|=0.
	\end{equation}
	In order to prove this identity we will reinterpret it in ergodic terms. Using a variant of Proposition~\ref{P:correspondence}  which applies to several sequences (see \cite[Proposition~3.3]{F16}) we get that  there exist a system
	$(X,\mathcal{X},\mu,T)$ and functions $F_1,\ldots, F_s\in L^\infty(\mu)$ such that
	\begin{equation}\label{E:324}
		\mathbb{E}^{\log}_{m\in {\bf I'}}\, \prod_{j=1}^s f_j(m+pn_j)=\int \prod_{j=1}^s T^{pn_j}F_j\, d\mu
	\end{equation}
	holds for every $p\in \P$  and
	$$
	\norm{f_1}_{U^{s}_{\text{log}}(\mathbf{I}')}= \nnorm{F_1}_{s}.
	$$
	Since by assumption $\norm{f_1}_{U^{s}_{\text{log}}(\mathbf{I})}=0$ and $\mathbf{I}'$ is a subsequence of $\mathbf{I}$,  we have  $\norm{f_1}_{U^{s}_{\text{log}}(\mathbf{I}')}=0$. Hence,   $\nnorm{F_1}_{s}=0$, and   Proposition~\ref{P:ergodic}
	gives that
	$$
	\mathbb{E}_{p\in \P}\Big|\int \prod_{j=1}^s T^{pn_j}F_j\, d\mu\Big|=0.
	$$
	We deduce from this and identity \eqref{E:324} that equation \eqref{E:iszero} holds. This completes the proof.
\end{proof}

\section{Nilmanifolds, nilcharacters, and nilsequences}

\subsection{Nilmanifolds}\label{SS:nilmanifolds} If $G$ is a group  we let
$G_1:=G$ and $G_{j+1}:=[G,G_j]$, $j\in \mathbb{N}$. We say that $G$ is {\em $s$-step nilpotent} if  $G_{s+1}$ is the trivial
group.
An {\em $s$-step nilmanifold} is a homogeneous space $X=G/\Gamma$, where  $G$ is an $s$-step nilpotent Lie group
and $\Gamma$ is a discrete cocompact subgroup of $G$. With $e_X$ we denote  the image in $X$ of the unit element of $G$.
An {\it $s$-step  nilsystem} is
a system of the form $(X, G/\Gamma, m_X, T_b)$,  where $X=G/\Gamma$ is a $k$-step nilmanifold, $b\in G$,  $T_b\colon X\to X$ is defined by  $T_b(g\cdot e_X) \mathrel{\mathop:}= (bg)\cdot e_X$ for  $g\in G$,  $m_X$  is
the normalized Haar measure on $X$, and  $G/\Gamma$  is the completion
of the Borel $\sigma$-algebra of $G/\Gamma$.
We call the map $T_b$ or the element $b$ a {\em nilrotation}. Henceforth,
we assume that
every nilsystem is equipped with a fixed Riemannian metric $d$.
If $\Psi$ is a function on $X$  we let
$
\norm \Psi_{\lip(X)}:=\sup_{x\in X}|\Psi(x)|+\sup_{\substack{x,y\in X\\
		x\neq y}}\frac{|\Psi(x)-\Psi(y)|}{d(x,y)}. $
With $\lip(X)$ we denote the set of all functions $\Psi\colon X\to \mathbb{C}$ with
bounded   $\lip(X)$-norm.

If  $H$ is a closed subgroup of $G$, then  it is shown in \cite[Section~2.2]{Lei05a} that the following three properties are equivalent:
\begin{itemize}
	\item $H\cdot \Gamma$ is closed in $G$;
	
	\item $H\cdot e_X$ is closed in $X$;
	
	\item $H\cap \Gamma$ is cocompact in $H$.
\end{itemize}
For any such $H$, the nilmanifold $H/(H\cap \Gamma)$ is called a {\em sub-nilmanifold} of $X$.

With  $G^0$ we denote the connected component of the identity element in $G$ (this is a normal subgroup
of $G$).
If the nilsystem is ergodic, then since  $G':=\langle G^0,b \rangle$  is  a non-empty open subgroup of
$G$ that is invariant under $b$, we have $G'\cdot e_X=X$. Hence, $X=G'/\Gamma'$ where $\Gamma'=G'\cap \Gamma$.
For every  ergodic
nilsystem we will  use such a representation for $X$ and thus  assume that
$G=\langle G^0,b \rangle.$
This implies (see for example~\cite[Theorem~4.1]{BHK05})  that for $j\geq 2$  all the  commutator subgroups $G_j$ are connected.

Throughout the article, in the case of ergodic nilsystems,  we are going to use these properties without further reference.

\subsection{Equidistribution} \label{SS:equidistribution}
Let  $X=G/\Gamma$ be a nilmanifold. We say that a sequence
$g\colon \mathbb{N}\to X$ is equidistributed in  $X$
if  for every $F\in C(X)$ we have
$$
\lim_{N\to\infty} \mathbb{E}_{n\in [N]} F(g(n))=\int F \ dm_X
$$
where $m_X$ denotes the normalized Haar measure on $X$.

It is proved in \cite{Les91} (see also \cite{Lei05a}) that for every $b\in G$
the set $Y=\overline{\{b^n\cdot e_X, n\in \mathbb{N}\}}$ is a sub-nilmanifold of $X$,
the nilrotation $b$ acts ergodically  in $Y$, and the sequence
$(b^n\cdot y)_{n\in\mathbb{N}}$
is equidistributed in $Y$ for every $y\in Y$. Furthermore,
we can represent $Y$ as $Y=H/\Delta$ where  $H$ is a closed subgroup of $G$ that contains the element $b$ (see the remark  following~\cite[Theorem~2.21]{Lei05a}).
If $Y$ is connected,  then for every $k\in \mathbb{N}$ the nilrotation  $b^k$ acts ergodically on $Y$. If $Y$ is not connected, then there exists $r\in \mathbb{N}$ such
that  the nilrotation $b^r$ acts ergodically on the connected component $Y^0$
of $e_X$ in $Y$.

\subsection{Vertical nilcharacters on $X$ and on $X^0$}\label{SS:nilcharacters}
Let $s\in \mathbb{N}$ and  $X=G/\Gamma$ be a (not necessarily connected) $s$-step nilmanifold  and suppose that   $G=\langle G^0,b \rangle$ for some  $b\in G$.
If $s\geq 2$, then $G_s$ is connected and the group $K_s:=G_s/(G_s\cap\Gamma)$ is a finite dimensional torus (perhaps the trivial one). Let $\widehat{K_s}$ be the dual group of $K_s$; it consists of the characters of $G_s$ that  are $(\Gamma\cap G_s)$-invariant.
A \emph{vertical nilcharacter} of $X$  with \emph{frequency} $\chi$,
where   $\chi\in\widehat{K_s}$,
is a
function $\Phi\in \lip(X)$   that  satisfies
$$
\Phi(u\cdot x)=\chi(u)\, \Phi(x), \ \text{ for every } \ u\in G_s \ \text{  and }  \ x\in X.
$$
If $\chi$ is a non-trivial character of $K_s$,  we say that  $\Phi$
is a \emph{non-trivial vertical nilcharacter},  otherwise we say that it is  a \emph{trivial vertical nilcharacter}.
It is known that the linear span of vertical nilcharacters    is dense in $C(X)$ with the uniform norm (see for example \cite[Proof of Lemma~2.7]{GT12} or \cite[Excercise 1.6.20]{Tao15}).

If the nilmanifold $X$ is not connected, let  $X^0$ be the connected component of $e_X$ in $X$. We claim  that for $s\geq 2$ the restriction of a non-trivial vertical nilcharacter $\Phi$ of $X$ onto $X^0$ is a non-trivial vertical nilcharacter of $X^0$ with  the same frequency.
To see this, note first that
since
$(G^0\Gamma)/\Gamma$
is a non-empty closed and open  subset of the connected space  $X^0$,  we have   $X^0=(G^0\Gamma)/\Gamma$.
It thus suffices to  show that $(G^0\Gamma)_s=G_s$.
To this end, let   $r$ be the smallest integer such that $b^r\in G^0\Gamma$. Then   $G/(G^0\Gamma)$ is isomorphic to the cyclic group $\mathbb{Z}_r$. By induction, for every $k\geq 1$ and all $g_1,\dots,g_k\in G$, we have
$$
[[\dots[g_1,g_2],g_3],\dots,g_k]^{r^k}=[[\dots[g_1^r,g_2^r],g_3^r],\dots,g_k^r]
\bmod G_{k+1}.
$$
Letting $k=s$ and using that $G_{s+1}$ is trivial,  we get for all  $g_1,\dots,g_s\in G$ that
$$[[\dots[g_1,g_2],g_3],\dots,g_s]^{r^s}=
[[\dots[g_1^r,g_2^r],g_3^r],\dots,g_s^r]\in (G^0\Gamma)_s.$$
Using this and because $G_s$ is  Abelian  and  spanned by the elements $[[\dots[g_1,g_2],g_3],\dots,g_s]$, we deduce that for every $h\in G_s$ we have $h^{r^s}\in(G^0\Gamma)_s$. Since $G_s$ is connected for $s\geq 2$  it is  divisible, hence the map $h\mapsto h^{r^s}$ is onto, and we conclude that $(G^0\Gamma)_s=G_s$.

\subsection{Nilsequences} \label{SS:nilsequences}
Following  \cite{BHK05}  we define:
\begin{definition} If $X=G/\Gamma$ is an $s$-step nilmanifold,
	$F\in C(X)$, and $b\in G$, we call the sequence $(F(b^n \cdot e_X))_{n\in \mathbb{N}}$
	an  \emph{$s$-step nilsequence} (we omit the adjective ``basic''). A  \emph{$0$-step nilsequence}
	is a constant sequence.
\end{definition}
\begin{remarks}
	$\bullet$ As remarked in Section~\ref{SS:equidistribution}, the set
	$Y=\overline{\{ b^n\cdot e_X, n\in \mathbb{N}\}}$ is  a sub-nilmanifold  of $X$ that can be represented as  $Y=H/\Delta$ for some closed subgroup $H$ of $G$ with $b\in H$. Thus, upon replacing $X$ with $Y$ we can assume that $b$ is an ergodic nilrotation of $X$.
	
	$\bullet$ For every $x=g\Gamma\in X$, the sequence $(F(b^nx))_{n\in \mathbb{N}}$ is a nilsequence,
	as it can be represented in the form  $(F'(b'^n\cdot e_X))_{n\in \mathbb{N}}$
	where $g':=g^{-1}bg$ and $F'(x):=F(gx)$, $x\in X$.
\end{remarks}

\subsubsection{Nilsequences of bounded complexity}\label{SS:bddcomplexity} To every nilmanifold $X$ (equipped with a Riemannian metric)
we associate a class of nilsequences of ``bounded complexity'' which will be used
in the formulation of the inverse theorem in the next section.
\begin{definition}
	Let $X=G/\Gamma$ be a nilmanifold. We let
	$\Psi_X$  be the set of nilsequences of the form $(\Psi(b^nx))_{n\in\mathbb{N}}$
	where $b\in G$, $x\in X$, and $\Psi\in \lip(X)$ satisfies  $\norm{\Psi}_{\lip(X)}\leq 1$.
\end{definition}
\begin{remark}
	Although $\Psi_X$ is not an algebra,  there exists a nilmanifold $X'$ (take $X'=X\times X$
	with a suitable Riemannian metric) such that $\Psi_{X'}$   contains the sum and the product of any two elements of $\Psi_X$. We will often use this observation without further notice.
\end{remark}

\subsubsection{Approximation by multiple-correlation sequences}
The next lemma will help us establish certain anti-uniformity properties of nilsequences that will be needed later.   It is a consequence of
\cite[Proposition~2.4]{Fr15}.
\begin{lemma}\label{L:nilkey}
	Let $s\in \mathbb{N}$ and $X$ be an $s$-step nilmanifold. Then for every $\varepsilon, L>0$ there exists $M=M(\varepsilon, X,L)$ such that the following holds:
	If $\psi\in L\cdot \Psi_X$,
	then
	there exist a
	system $(Y,\mathcal{Y},\mu,T)$ and functions $F_0,\ldots, F_{s}\in L^\infty(\mu)$,
	all bounded by $M$,
	such  that the
	sequence $(b(n))_{n\in\mathbb{N}}$, defined by
	\begin{equation}\label{E:bn}
		b(n):=\int  F_0 \cdot T^{k_{1} n}F_{1}\cdot  \ldots \cdot  T^{k_{s} n}F_{s}\ d\mu, \quad n\in \mathbb{N},
	\end{equation}
	where
	$k_j:=\frac{(s+1)!j}{j+1}$ for $j=1,\ldots, s$, satisfies
	$$
	\norm{\psi-b}_\infty\leq \varepsilon.
	$$
\end{lemma}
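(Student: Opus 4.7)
My plan is to deduce this lemma as a direct application of Proposition~2.4 of \cite{Fr15}, which provides exactly this kind of uniform approximation of nilsequences by multiple correlation sequences with a prescribed tuple of integer coefficients, subject to a non-degeneracy hypothesis. The first step is to verify that the tuple $(k_1,\ldots,k_s)$ with $k_j=(s+1)!\,j/(j+1)$ consists of pairwise distinct positive integers: divisibility $(j+1)\mid (s+1)!$ holds for $1\le j\le s$, and the map $j\mapsto j/(j+1)$ is strictly increasing on $\{1,\ldots,s\}$, so the $k_j$ are distinct. I would then check that this tuple satisfies the non-degeneracy condition in \cite{Fr15}, which amounts to the absence of non-trivial rational relations that would force the characteristic factor of the associated correlation to drop below the $s$-step nilfactor; for the above rationals $j/(j+1)$ this is a direct verification.

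Having checked the hypotheses, I would apply \cite[Proposition~2.4]{Fr15} to $\Psi\in L\cdot\lip(X)$ and the nilrotation $b$, using the remark from Section~\ref{SS:nilsequences} to reduce to the case where $b$ acts ergodically on $X$. The proposition yields a system $(Y,\mathcal Y,\mu,T)$ and functions $F_0,\ldots,F_s\in L^\infty(\mu)$ such that $b(n)$, defined by \eqref{E:bn}, satisfies $\|\psi-b\|_\infty\le\varepsilon$. To extract the uniform bound $M=M(\varepsilon,X,L)$ on $\|F_j\|_\infty$, I would inspect the construction in \cite{Fr15}: $\Psi$ is approximated to within $\varepsilon$ by a finite linear combination of vertical nilcharacters on $X$, the number and Lipschitz norms of which depend only on $\varepsilon$, $X$, and $L$ by the standard quantitative density argument (see \cite[Proof of Lemma~2.7]{GT12}). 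Each such combination is then recast as a multiple correlation on a product of copies of $X$, and the resulting $L^\infty$-bounds on the $F_j$'s inherit these dependencies.

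The main obstacle, were one to reprove this from scratch rather than cite \cite{Fr15}, would be the construction of the system $(Y,\mathcal Y,\mu,T)$ and the proof that integration against coordinate functions actually recovers $\psi(n)=\Psi(b^n x)$. The natural choice for $Y$ is a product nilmanifold built out of copies of $X$ equipped with a diagonal-type action driven by the $k_j$'s, with the functions $F_j$ depending essentially on the $j$-th factor. Identifying the orbit closure of $\bigl(b^{k_1 n},\ldots,b^{k_s n}\bigr)\cdot(e_X,\ldots,e_X)$ via Leibman's equidistribution theorem, and showing that the non-degeneracy of the $k_j$'s makes this closure rich enough to reconstruct the original nilsequence through an integration, is the technical heart of the matter. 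The argument proceeds by induction on the step $s$, using the density of vertical nilcharacters (Section~\ref{SS:nilcharacters}) at each stage to reduce to a single nilcharacter on the top-step subgroup $G_s$, where the reconstruction becomes a direct Fourier-analytic computation on the vertical torus $K_s$.
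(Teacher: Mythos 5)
Your proposal correctly reaches for \cite[Proposition~2.4]{Fr15}, which is indeed the engine behind this lemma, but the route you take to extract the uniform bound $M=M(\varepsilon,X,L)$ is genuinely different from, and harder than, the paper's. The paper's mechanism is a one-line compactness argument: the set of functions on $X$ with Lipschitz constant at most $L$ is compact in the $\norm{\cdot}_\infty$-topology (Arzel\`a--Ascoli), so it is covered by finitely many $\varepsilon$-balls; one then applies \cite[Proposition~2.4]{Fr15} \emph{without} any explicit bound to each of the finitely many centers and takes the maximum of the resulting $L^\infty$-bounds. This treats the cited proposition entirely as a black box. You instead propose to open the box: track $\Psi$ through a quantitative vertical-nilcharacter decomposition (citing the argument behind \cite[Lemma~2.7]{GT12}), argue that the number of characters and their Lipschitz norms depend only on $(\varepsilon,X,L)$, and then follow those bounds through the construction of the system $(Y,\mathcal Y,\mu,T)$ and the functions $F_j$ inside \cite{Fr15}. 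That can in principle be made to work, but it requires re-examining and quantifying the internal steps of \cite{Fr15} rather than applying it as stated, and the phrase ``standard quantitative density argument'' is doing a lot of unargued work that the paper simply replaces by compactness. Your preliminary verifications that the $k_j=\frac{(s+1)!\,j}{j+1}$ are distinct positive integers and satisfy a non-degeneracy hypothesis are superfluous: these exponents are exactly the ones that \cite[Proposition~2.4]{Fr15} produces, so there is nothing to check. The final paragraph of your proposal, sketching the orbit-closure construction underlying \cite{Fr15}, is accurate as commentary but is not needed once the proposition is invoked. In short: same key reference, but the paper's compactness reduction is the cleaner and more robust way to obtain the uniformity in $M$.
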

\begin{remark}
	Alternatively, we can use  as approximants sequences of the form
	$b(n):=\lim_{M\to\infty} \mathbb{E}_{m\in [M]} a_0(m)\cdot a_1(m+k_1n)\cdot \ldots\cdot a_s(m+k_sn), n\in \mathbb{N}, $
	where for $j=0,\ldots, s$ the sequences $ a_j\in \ell^\infty(\mathbb{N})$ are defined by $a_j(m):=F_j(T^my_0), m\in \mathbb{N},$ for suitable  $y_0\in Y$.
\end{remark}
\begin{proof}
	Let $\varepsilon>0$.
	First note that since the space of functions on $(X,d_X)$ with  Lipschitz
	constant at most $L$  is compact with respect to the  $\norm{\cdot}_\infty$-norm, we can cover
	this space by a finite number of  $\norm{\cdot}_\infty$-balls of radius at most $\varepsilon$.
	It follows  from this that in order to verify the asserted approximation property,
	it suffices to verify the property for every fixed nilsequence $\psi$ without asking for additional
	uniformity properties for  the $L^\infty(\mu)$ norms of the functions $F_0,\ldots, F_{s}\in L^\infty(\mu)$. This statement now follows immediately from
	\cite[Proposition~2.4]{Fr15}.
\end{proof}

\subsubsection{Reduction of degree of nilpotency} The next result will be used in the proof of the inverse theorem in the next section.   It is a direct consequence of  the constructions in \cite[Section~7]{GT12} and it is
stated in a form equivalent to the one below in \cite[Lemma~1.6.13]{Tao12} (for $\Phi_1=\Phi_2$ but the same argument works in the more general case):
\begin{proposition}[Green, Tao \cite{GT12}]\label{P:GT}
	For  $s\geq 2$ let $X=G/\Gamma$	be an $s$-step nilmanifold. Then there exist an $(s-1)$-step nilmanifold $Y$ and $C=C(X)>0$ such that
	for all vertical nilcharacters
	$\Phi_1, \Phi_2$  of $X$ having the same frequency and satisfying  $\norm{\Phi_j}_{\lip(X)}\leq 1$, $j=1,2$,  every $b\in G$, and every
	$h\in \mathbb{N}$,  the sequence
	$(\Phi_1(b^{n+h}\cdot e_X)\, \overline{\Phi_2(b^n\cdot e_X)})_{n\in\mathbb{N}}$ 	is an $(s-1)$-step nilsequence in $C\cdot \Psi_Y$.
\end{proposition}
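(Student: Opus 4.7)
The proof combines an invariance observation with a product-nilmanifold construction. The key algebraic fact is that $G_s$ is central in $G$ (since $G$ is $s$-step), and the product $\tilde\Phi(x,y):=\Phi_1(x)\overline{\Phi_2(y)}$ on $X\times X$ is $\Delta(G_s)$-invariant: because $\Phi_1,\Phi_2$ share the frequency $\chi$ and $|\chi|=1$, for every $u\in G_s$ we have $\tilde\Phi((u,u)\cdot(x,y))=\chi(u)\overline{\chi(u)}\tilde\Phi(x,y)=\tilde\Phi(x,y)$; moreover $\|\tilde\Phi\|_{\mathrm{Lip}(X\times X)}$ is bounded by a constant depending only on $X$.

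The plan is as follows. Take $\tilde G:=G\times G$, $\tilde\Gamma:=\Gamma\times\Gamma$, form the Lie group $H:=\tilde G/\Delta(G_s)$ (with $\Delta(G_s):=\{(u,u):u\in G_s\}$ central in $\tilde G$), let $\Lambda$ be the image of $\tilde\Gamma$ in $H$, and set $Z:=H/\Lambda$. Then $\tilde\Phi$ descends to a Lipschitz function $\tilde\Phi_Z$ on $Z$ whose norm depends only on $X$. Define $Y:=\bar G/\bar\Gamma$ with $\bar G:=G/G_s$ and $\bar\Gamma:=G_s\Gamma/G_s$; since $G_s$ is the last non-trivial commutator of $G$, the group $\bar G$ is $(s-1)$-step, and $Y$ is also realized inside $Z$ as the orbit $\bar G^{\Delta}\cdot e_Z$, where $\bar G^{\Delta}:=\Delta(G)/\Delta(G_s)\cong\bar G$ is an $(s-1)$-step closed subgroup of $H$.

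Using the factorization $(b^{n+h},b^n)=(b,b)^n\cdot(b^h,e)$ in $\tilde G$, and noting that $\overline{(b,b)}\in\bar G^{\Delta}$, the orbit in $Z$ lies in the translate $\overline{(b^h,e)}\cdot Y$, an $(s-1)$-step sub-nilmanifold. Defining $\iota\colon Y\to Z$ by $\iota(\bar g):=\overline{(gb^h,g)}\cdot e_Z$ and setting $\Psi:=\tilde\Phi_Z\circ\iota$, one checks that $\iota(\bar b^{n}e_Y)=\overline{(b^{n+h},b^n)}\cdot e_Z$, whence the sequence equals $\Psi(\bar b^n e_Y)$ as required.

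For the uniform Lipschitz bound on $\Psi$, I would use a right-invariant metric on $H$ together with the induced quotient metric on $Z$. The crucial cancellation is that, for lifts $g_1,g_2\in G$ of $\bar g_1,\bar g_2\in Y$, right-invariance gives
\[
d_H\bigl(\overline{(g_1b^h,g_1)},\overline{(g_2b^h,g_2)}\bigr)=d_H\bigl(\overline{(g_1g_2^{-1},g_1g_2^{-1})},e_H\bigr),
\]
since $(g_1b^h)(g_2b^h)^{-1}=g_1g_2^{-1}$: the factor $b^h$, appearing on the right of both $g_1$ and $g_2$, drops out. The right-hand side is $\le C_0\cdot d_Y(\bar g_1,\bar g_2)$ for a constant $C_0=C_0(X)$, so $\iota$ is Lipschitz with constant $C_0$, and combining with the uniform bound on $\|\tilde\Phi_Z\|_{\mathrm{Lip}(Z)}$ yields $\|\Psi\|_{\mathrm{Lip}(Y)}\le C(X)$ uniformly in $b$, $h$, $\Phi_1$, $\Phi_2$.

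The main obstacle is the clean passage from the orbit-side description to the fixed model $Y$: naively verifying that $\iota$ is well-defined as a map from $Y$ (rather than from a specific set of lifts) requires $b^{-h}\Gamma b^h\subseteq \Gamma$, which fails for a general non-normal lattice $\Gamma$. This is resolved by first identifying the orbit-closure in $Z$ as a sub-nilmanifold, then using the $(s-1)$-step structure of $\bar G^{\Delta}$ together with the right-invariant cancellation above to transfer the Lipschitz bound back to a well-defined function on the fixed $(s-1)$-step nilmanifold $Y$ depending only on $X$.
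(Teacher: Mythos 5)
Your overall strategy — pass to $X\times X$, exploit that $\widetilde\Phi(x,y):=\Phi_1(x)\overline{\Phi_2(y)}$ is invariant under the diagonal $\Delta(G_s)$ because the two nilcharacters share a frequency, quotient by $\Delta(G_s)$, and read off the sequence along a diagonal orbit — is indeed the skeleton of the Green--Tao/Tao construction that the paper cites. But as written the argument has two genuine gaps.

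First, as you yourself observe, the map $\iota(\bar g):=\overline{(gb^h,g)}\cdot e_Z$ is not well-defined: replacing the lift $g$ by $g\gamma$ with $\gamma\in\Gamma$ changes the image by right multiplication by $\overline{(b^{-h}\gamma b^h,\gamma)}$, which lies in $\Lambda$ only when $b^{-h}\gamma b^h\in\Gamma$, i.e.\ essentially never. The obvious repair is to place $b^h$ on the left and use $\jmath(\bar g):=\overline{(b^h g,g)}\cdot e_Z$; changing $g\mapsto g\gamma$ then multiplies the lift on the right by $(\gamma,\gamma)\in\Gamma\times\Gamma$ and changing $g\mapsto ug$ with $u\in G_s$ multiplies on the left by $(u,u)\in\Delta(G_s)$, so $\jmath$ descends to $Y=\bar G/\bar\Gamma$. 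Your closing remark that the ill-definedness ``is resolved by \ldots transferring the Lipschitz bound back to a well-defined function on $Y$'' does not constitute a proof of this; it is precisely what would need to be proved.

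Second, and more seriously, once you make this repair the ``right-invariant cancellation'' you appeal to no longer applies. Your observation that $(g_1 b^h)(g_2 b^h)^{-1}=g_1 g_2^{-1}$ is valid for the ill-defined $\iota$, but for the well-defined $\jmath$ one gets $(b^h g_1)(b^h g_2)^{-1}=b^h(g_1 g_2^{-1})b^{-h}$, and $\operatorname{Ad}(b^h)$ does not drop out. Concretely, the function $\Psi(\bar g\bar\Gamma)=\Phi_1(b^h g\cdot e_X)\overline{\Phi_2(g\cdot e_X)}$ is well-defined on your $Y=\bar G/\bar\Gamma$, but its Lipschitz norm is of order $h$, not $O_X(1)$: a computation on the Heisenberg nilmanifold (so $s=2$) shows that the derivative of $\Psi$ in a horizontal direction $v$ contains a term proportional to $h\,\ell([\log b,v])$, where $\ell$ is the infinitesimal vertical frequency, and this term does not cancel. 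Thus your choice of $Y$ is too small: there is no function on $\bar G/\bar\Gamma$ realizing the sequence with a uniformly bounded Lipschitz constant. In the Heisenberg example the uniform bound is recovered only after separating $h\beta=\{h\beta\}+\lfloor h\beta\rfloor$, pushing the integer part $\lfloor h\beta\rfloor$ into the translation parameter of the orbit, and retaining only the fractional part in the function; carrying this out in general is precisely the content of the quantitative factorization/equidistribution theory of Green and Tao (Section~7 of~\cite{GT12}; see also Lemma~1.6.13 of~\cite{Tao12}) that the paper invokes. That machinery produces a fixed $(s-1)$-step nilmanifold $Y$ larger than $\bar G/\bar\Gamma$, together with the required Lipschitz control, and cannot be replaced by a metric-invariance trick alone.
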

To illustrate this result by an example, take
the $2$-step
nilsequence $(\e(n^2\alpha))_{n\in\mathbb{N}}$ which can be defined by a vertical nilcharacter $\Phi$ on the Heisenberg nilmanifold.  We take $\Phi_1=\Phi_2=\Phi$, then   the difference operation results to the $1$-step nilsequences  $(\e(2nh\alpha+h^2\alpha))_{n\in\mathbb{N}}$ which can be represented as
$(\Phi_h(n\beta))_{n\in\mathbb{N}}$, $h\in \mathbb{N}$,  where $\Phi_h\colon \T\to\mathbb{C}$ is defined by $\Phi_h(t):=\e(h^2\alpha)\, \e(t)$, $h\in\mathbb{N}$, and $\beta:=2h\alpha$.

\section{$U^s(\mathbf{I})$-inverse theorem for ergodic sequences}
Henceforth, we assume that $\mathbf{I}=(I_N)_{N\in\mathbb{N}}$ is a  sequence of intervals with $|I_N|\to \infty$.

\subsection{Statement of the inverse theorem}
The goal of this section is to prove the following  inverse theorem:
\begin{theorem}\label{T:ergodicInverse}
	Let $s\in \mathbb{N}$ and suppose that the sequence $a\in \ell^\infty(\mathbb{N})$ is   ergodic for Ces\`aro averages on the sequence of intervals $\mathbf{I}=(I_N)_{N\in\mathbb{N}}$.
	If $\norm{a}_{U^{s}(\mathbf{I})}>0$,  then
	there exist an $(s-1)$-step nilsequence $\phi$  and an  $(s-2)$-step nilmanifold $Y$  such that
	$$
	\limsup_{M\to \infty}\limsup_{N\to\infty} \mathbb{E}_{n\in I_N}\sup_{\psi\in \Psi_Y} |\mathbb{E}_{m\in [n,n+M]}\, a(m)\, \phi(m)\, \psi(m)|>0.
	$$
\end{theorem}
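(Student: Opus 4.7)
The argument proceeds in two stages: first extract a global $(s-1)$-step correlation via Host--Kra, then upgrade it to the localized form by elementary F\o lner manipulations.

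\emph{Stage 1 (infinitary/correspondence).} Apply Proposition~\ref{P:correspondence} to produce an ergodic system $(X,\mathcal{X},\mu,T)$, a function $F\in L^\infty(\mu)$, and a point $\omega\in X$ such that $a(n)=F(T^n\omega)$ and $\norm{a}_{U^s(\mathbf{I})}=\nnorm{F}_s$; ergodicity of the system follows from the ergodicity hypothesis on $a$. The assumption $\norm{a}_{U^s(\mathbf{I})}>0$ becomes $\nnorm{F}_s>0$, so $F$ is not orthogonal to $L^2(\mathcal{Z}_{s-1})$. By the Host--Kra structure theorem (Theorem~\ref{T:HK}), the factor $(X,\mathcal{Z}_{s-1},\mu,T)$ is an inverse limit of ergodic $(s-1)$-step nilsystems; pulling $E(F\mid\mathcal{Z}_{s-1})$ back to a sufficiently large such nilfactor and invoking the density of vertical nilcharacters in $C(Z)$ from Section~\ref{SS:nilcharacters}, we obtain an ergodic $(s-1)$-step nilsystem $(Z=G/\Gamma,m_Z,b)$, a factor map $\pi\colon X\to Z$ with $\pi\circ T=b\circ\pi$, and a vertical nilcharacter $\Phi\in\lip(Z)$ such that
\begin{equation*}
c \;:=\; \int F\cdot\overline{\Phi\circ\pi}\,d\mu \;\neq\; 0.
\end{equation*}
Set $\phi(n):=\overline{\Phi(b^n\cdot\pi(\omega))}$; then $\phi$ is an $(s-1)$-step nilsequence, and the correspondence together with the continuity of $F\cdot\overline{\Phi\circ\pi}$ yields $\lim_N \mathbb{E}_{n\in I_N}a(n)\phi(n)=c$.

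\emph{Stage 2 (passage to the local statement).} Take $Y$ to be the trivial one-point nilmanifold, so that the constant sequence $\psi\equiv 1$ lies in $\Psi_Y$. Restricting the supremum to this single choice and pulling the absolute value outside,
\begin{equation*}
\mathbb{E}_{n\in I_N}\sup_{\psi\in\Psi_Y}\big|\mathbb{E}_{m\in[n,n+M]}a(m)\phi(m)\psi(m)\big| \;\geq\; \Big|\mathbb{E}_{n\in I_N}\mathbb{E}_{m\in[n,n+M]}a(m)\phi(m)\Big|.
\end{equation*}
Interchanging the two averages, the right-hand side equals $\big|\tfrac{1}{M}\sum_{k=0}^{M-1}\mathbb{E}_{n\in I_N}a(n+k)\phi(n+k)\big|$; for each fixed $k$, since $|I_N|\to\infty$ and the $I_N$ are intervals (hence F\o lner under shifts), the inner average tends to $c$ as $N\to\infty$. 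Hence the whole expression tends to $c$ as $N\to\infty$ for every $M$, and taking $\limsup_M\limsup_N$ produces a lower bound of $|c|>0$.

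\emph{Main obstacle.} The genuine content lies in Stage~1: extracting a vertical nilcharacter $\Phi$ (rather than an arbitrary $L^2(\mathcal{Z}_{s-1})$-function) that correlates with $F$. This requires interfacing the inverse-limit description of $\mathcal{Z}_{s-1}$ supplied by Host--Kra with the decomposition of $\lip(Z)$ into vertical nilcharacters, and handling the possibility that $Z$ is disconnected as discussed in Section~\ref{SS:nilcharacters}. Once the right $\phi$ is produced, Stage~2 is essentially soft. The freedom in $\psi\in\Psi_Y$ beyond the trivial choice is not exploited in proving Theorem~\ref{T:ergodicInverse} itself; rather, it is reserved for subsequent use in Theorem~\ref{T:uniformity}, where Proposition~\ref{P:GT} converts shift-product operations against a strongly aperiodic multiplicative function into tests against the lower-complexity objects $\psi\in\Psi_Y$ that can be ruled out via the orthogonality results of Section~\ref{SS:aperiodic}.
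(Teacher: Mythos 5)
There is a genuine gap, and it is fatal. Stage~1 asserts that $\nnorm{F}_s>0$ yields an $(s-1)$-step nilsequence $\phi$ with $\lim_N \mathbb{E}_{n\in I_N}a(n)\phi(n)=c\neq 0$. This is false, and the paper itself supplies a counterexample in the remark following Theorem~\ref{T:uniformity}: the sequence $b(n)=(-1)^{n+\lfloor\sqrt n\rfloor}$ is ergodic for Ces\`aro averages on $\mathbf{I}=([N])_{N\in\mathbb{N}}$ with $\norm{b}_{U^2(\mathbf{I})}=1$, yet $\lim_N\mathbb{E}_{n\in[N]}b(n)\e(nt)=0$ for every $t\in\R$, so no $1$-step nilsequence correlates globally with $b$. (Correlation persists only on typical short intervals against $\phi(m)=(-1)^m$, which is exactly what the localized $\limsup_M\limsup_N$ conclusion captures.) Concretely, your step fails at ``$\phi(n):=\overline{\Phi(b^n\cdot\pi(\omega))}$'': the Host--Kra factor map $\pi\colon X\to Z$ is only measurable, defined $\mu$-a.e., so $\pi(\omega)$ has no meaning for the specific correspondence-principle point $\omega$, and the weak-$*$ convergence of the empirical measures $\mu_N$ does not propagate to the discontinuous test function $\Phi\circ\pi$. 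There is no route from $\int F\cdot\overline{\Phi\circ\pi}\,d\mu\neq 0$ to a concrete nilsequence that correlates with $a$ along $\mathbf{I}$.

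Once Stage~1 collapses, Stage~2 has nothing to work with. The paper's actual route is designed precisely to sidestep this obstruction: using ergodicity it rewrites $\norm{a}_{U^s(\mathbf I)}^{2^s}$ as $\mathbb{E}_{\underline h}\bigl|\mathbb{E}_{n\in\mathbf I}\prod_\epsilon\mathcal{C}^{|\epsilon|}a(n+\epsilon\cdot\underline h)\bigr|^2$ and applies the infinitary decomposition of Proposition~\ref{P:InfiniteDec} to the correlation sequence $A(\underline h)$ as a function on $\mathbb{N}^{s-1}$ (which \emph{is} a nilsequence-valued object, no generic point needed), then brings in the finitary decomposition (Proposition~\ref{P:FiniteDec}) and Proposition~\ref{P:GT} to produce the $\sup_{\psi\in\Psi_Y}$ and to remove an unwanted parameter $h$. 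The supremum over $\Psi_Y$ that you reduce to the trivial nilmanifold is not an afterthought: the paper notes explicitly that the discorrelation statement \emph{without} the $\sup$ holds with no ergodicity hypothesis at all, so if your simplified conclusion were provable the ergodicity assumption in the theorem would be superfluous and the structure of the whole paper would be different. The $\sup_{\psi\in\Psi_Y}$, with $Y$ an $(s-2)$-step nilmanifold, is exactly where ergodicity earns its keep and what makes the conclusion strong enough to run the induction in the proof of Theorem~\ref{T:uniformity}.
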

\begin{remarks}
	$\bullet$	A variant for logarithmic averages also holds; one needs to  assume ergodicity for logarithmic averages and replace $\norm{a}_{U^{s+1}(\mathbf{I})}$ with $\norm{a}_{U^{s+1}_\text{log}(\mathbf{I})}$ and
	$\mathbb{E}_{n\in I_N}$ with $\mathbb{E}^{\log}_{n\in I_N}$.

	$\bullet$
	The ergodicity assumption is necessary. Indeed,  if
	$ a(n):=\sum_{k=1}^\infty \e(n \alpha_k)\, \one_{[k^2,(k+1)^2)}(n)$, $n\in \mathbb{N}$, where the sequence of real numbers $(\alpha_k)_{k\in \mathbb{N}}$ is chosen appropriately,   then for every $1$-step nilsequence $\phi$ we have
	$$
	\lim_{M\to \infty}\limsup_{N\to\infty} \mathbb{E}_{n\in [N]}|\mathbb{E}_{m\in [n,n+M]}\, a(m)\, \phi(m)|=0.
	$$
	On the other hand for $\mathbf{I}=([N])_{N\in\mathbb{N}}$ we have
	$\norm{a}_{U^2(\mathbf{I})}=1$.
	
	$\bullet$ For general (not necessarily ergodic) sequences $a\in \ell^\infty(\mathbb{N})$ it can be shown  (as in \cite[Section~4]{T16}) that
	$\norm{a}_{U^{s}(\mathbf{I})}=0$ holds  if and only if for every $(s-1)$-step nilmanifold $Y$ we have
	$$
	\lim_{M\to \infty} \limsup_{N\to \infty} \mathbb{E}_{n\in I_N}\sup_{\psi\in \Psi_Y}|\mathbb{E}_{m\in [n,n+M]}\,  a(m) \cdot \psi(m)|=0.
	$$
	In particular for $s=2$ and $\mathbf{I}=([N])_{N\in \mathbb{N}}$, we get that $\norm{a}_{U^2(\mathbf{I})}=0$ if and only if
	$$
	\lim_{M\to \infty} \limsup_{N\to \infty} \mathbb{E}_{n\in [N]}\sup_{t}|\mathbb{E}_{m\in [n,n+M]} \, a(m) \cdot \e(mt)|=0.
	$$
	Despite its apparent simplicity,  this condition is very hard to verify for particular arithmetic sequences, and it is still unknown for
	the  Liouville and the M\"obius function.
\end{remarks}

\subsection{Sketch of proof for $s=2$ versus  $s>2$} The proof of Theorem~\ref{T:ergodicInverse} is rather simple for $s=2$; we sketch it in order to motivate and explain  some of the maneuvers needed
in the general case. The argument proceeds as follows:
\begin{itemize}	
	\item We first use ergodicity of the sequence $(a(n))_{n\in\mathbb{N}}$ in order to establish the identity
	$$
	\norm{a}_{U^2(\mathbf{I})}^4
	=
	\lim_{H\to \infty}\mathbb{E}_{h\in [H]} |\mathbb{E}_{n\in {\bf I}} \, a(n+h)\cdot \overline{a(n)} |^2.
	$$
	Using this identity and our assumption $\norm{a}_{U^2(\mathbf{I})}>0$,  we deduce that
	\begin{equation}\label{E:positive}
		\lim_{H\to \infty}\mathbb{E}_{h\in [H]}\big( \mathbb{E}_{n\in {\bf I}} \, a(n+h)\cdot \overline{a(n)}\cdot A(h)\big)>0,
	\end{equation}
	where
	$$
	A(h):=\mathbb{E}_{n\in {\bf I}} \, \overline{ a(n+h)}\cdot a(n), \quad h\in \mathbb{N}.
	$$
	This step generalizes straightforwardly  when $s>2$ and gives
	relation  \eqref{E:0+} below.

	\item We can decompose
	the (positive definite) sequence $(A(n))_{n\in\mathbb{N}}$  into a  structured component which is a trigonometric polynomial sequence and an error term   which  is small in uniform density. Hence,  we can assume that \eqref{E:positive} holds when $A(n)=\e(nt)$, $n\in \mathbb{N},$ for some $t\in \R$.   The use of an  infinitary decomposition result is crucial in order to get for $s=2$ an inverse condition that does not involve a supremum in the inner-most average and for $s\geq 3$ an inverse condition that involves a   supremum over $(s-2)$-step (and not $(s-1)$-step) nilsequences of bounded complexity.  The appropriate decomposition result when $s\geq 3$ is Proposition~\ref{P:InfiniteDec} which is proved using deep results from ergodic theory (the main ingredient is Theorem~\ref{T:HK}). Since in this more complicated setup we cannot later on utilize simple identities that  linear exponential sequences satisfy, we take particular care to use as  structured components sequences which have a very convenient (though seemingly  complicated) form.

	\item After interchanging the averages over $h$ and $n$ in \eqref{E:positive}, we take absolute values, and  deduce that
	$$
	\limsup_{H\to \infty} \limsup_{N\to\infty}\mathbb{E}_{n\in  I_N}
	|\mathbb{E}_{h\in [n,n+H]}
	\, a(h)\cdot \e(ht) |>0,
	$$
	which immediately implies the  conclusion of
	Theorem~\ref{T:ergodicInverse} when  $s=2$. This step is harder when $s>2$ and two additional maneuvers are needed  (described in Steps $3$ and $4$ in the proof of  Theorem~\ref{T:ergodicInverse}). One key idea is to  introduce an additional short range average that allows us to replace some unwanted expressions with $(s-2)$ nilsequences. This part  of the argument uses the finitary decomposition result of Proposition~\ref{P:FiniteDec} which is the reason why we get an inverse condition involving a sup over all $(s-2)$-nilsequences of bounded complexity. Another idea needed is to use Proposition~\ref{P:GT} in order to remove an unwanted supremum over a parameter $h\in \mathbb{N}$; not doing so  would cause serious problems later on when we try  to verify the inverse condition for the class of  multiplicative functions we are interested in.
\end{itemize}

We give the details  of the proof of  Theorem~\ref{T:ergodicInverse}  in the next subsections.

\subsection{Uniformity estimates}
We will use the next estimate in the proof of Lemma~\ref{L:estimate'},
which in turn will be used in the proof of Proposition~\ref{P:FiniteDec}.
\begin{lemma}\label{L:estimate}
	Let $s\geq 2$, $M\in \mathbb{N}$,  and  $a_\epsilon \colon \mathbb{Z}_M\to \mathbb{C}$, $\epsilon\in [\![s]\!]^*$.
	Then
	$$
	\mathbb{E}_{m \in \mathbb{Z}_M}\Big|\mathbb{E}_{\underline{h}\in \mathbb{Z}_M^s}\, \prod_{\epsilon\in [\![s]\!]^*}
	a_\epsilon(m+\epsilon\cdot \underline{h})\Big|\leq  \prod_{\epsilon\in [\![s]\!]^*}\norm{a_\epsilon}_{U^s(\mathbb{Z}_M)}.
	$$
\end{lemma}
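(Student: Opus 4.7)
The plan is to derive the inequality from the standard Cauchy--Schwarz--Gowers inequality by introducing an auxiliary phase function to absorb the outer absolute value into the product on the inside.

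First, for each $m\in \mathbb{Z}_M$ let
$$
a_{\underline{0}}(m)\in \mathbb{C}, \quad |a_{\underline{0}}(m)|\leq 1,
$$
be a phase such that
$$
a_{\underline 0}(m)\cdot \mathbb{E}_{\underline{h}\in \mathbb{Z}_M^s}\prod_{\epsilon\in [\![s]\!]^*}a_\epsilon(m+\epsilon\cdot \underline{h})=\Big|\mathbb{E}_{\underline{h}\in \mathbb{Z}_M^s}\prod_{\epsilon\in [\![s]\!]^*}a_\epsilon(m+\epsilon\cdot \underline{h})\Big|.
$$
Averaging over $m$, the left-hand side of the lemma becomes
$$
\mathbb{E}_{m\in \mathbb{Z}_M}\,\mathbb{E}_{\underline h\in \mathbb{Z}_M^s}\prod_{\epsilon\in [\![s]\!]}a_\epsilon(m+\epsilon\cdot \underline h),
$$
where now $\epsilon$ ranges over the full cube $[\![s]\!]$ (including $\underline 0$, since $m+\underline 0\cdot\underline h=m$).

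Next I would absorb the conjugation signs appearing in the Cauchy--Schwarz--Gowers inequality into the $a_\epsilon$. Setting $b_\epsilon:=\mathcal{C}^{|\epsilon|}a_\epsilon$, so that $a_\epsilon=\mathcal{C}^{|\epsilon|}b_\epsilon$ and $\norm{b_\epsilon}_{U^s(\mathbb{Z}_M)}=\norm{a_\epsilon}_{U^s(\mathbb{Z}_M)}$, the previous expression equals
$$
\mathbb{E}_{m\in \mathbb{Z}_M,\, \underline h\in \mathbb{Z}_M^s}\prod_{\epsilon\in [\![s]\!]}\mathcal{C}^{|\epsilon|}b_\epsilon(m+\epsilon\cdot \underline h),
$$
to which the standard Cauchy--Schwarz--Gowers inequality applies, yielding the bound $\prod_{\epsilon\in [\![s]\!]}\norm{b_\epsilon}_{U^s(\mathbb{Z}_M)}$.

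Finally, I would use the elementary estimate $\norm{f}_{U^s(\mathbb{Z}_M)}\leq \norm{f}_\infty$ (valid for any $s\geq 1$; it follows by induction on $s$ from the definition, or directly from the fact that the $2^s$-th power is an average of products of $2^s$ values of $f$), applied to $f=a_{\underline{0}}$. Since $|a_{\underline 0}|\leq 1$, this gives $\norm{a_{\underline 0}}_{U^s(\mathbb{Z}_M)}\leq 1$, and the product bound collapses to
$$
\prod_{\epsilon\in [\![s]\!]^*}\norm{a_\epsilon}_{U^s(\mathbb{Z}_M)},
$$
as desired. There is no real obstacle here; the only points requiring care are (i) that the phase function chosen is measurable, which is automatic on the finite group $\mathbb{Z}_M$, and (ii) correctly matching the conjugation pattern $\mathcal{C}^{|\epsilon|}$ demanded by the Cauchy--Schwarz--Gowers inequality with the (sign-free) product appearing in the statement, which the substitution $a_\epsilon\mapsto b_\epsilon$ handles at no cost to the $U^s$-norms.
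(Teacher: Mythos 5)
Your proof is correct and follows essentially the same route as the paper: introduce a phase function $a_{\underline 0}$ to remove the outer absolute value, then apply the Gowers--Cauchy--Schwarz inequality and use $\norm{a_{\underline 0}}_{U^s(\mathbb{Z}_M)}\leq\norm{a_{\underline 0}}_\infty\leq 1$. The only difference is that you spell out the conjugation bookkeeping via the substitution $b_\epsilon=\mathcal{C}^{|\epsilon|}a_\epsilon$, which the paper leaves implicit when it invokes \cite[Lemma~3.8]{Gow01}.
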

\begin{proof}
	Notice that the left hand side is equal to
	$$
	\mathbb{E}_{m \in \mathbb{Z}_M}\Big(\mathbb{E}_{\underline{h}\in \mathbb{Z}_M^s}\, \prod_{\epsilon\in [\![s]\!]}
	a_\epsilon(m+\epsilon\cdot \underline{h})\Big),
	$$
	where $a_{\underline{0}}\colon \mathbb{Z}_M\to \mathbb{C}$ is defined by
	$$
	a_{\underline{0}}(m):=e^{-i\phi_m}, \quad \phi_m:=\arg\Big(\mathbb{E}_{\underline{h}\in \mathbb{Z}_M^s}\, \prod_{\epsilon\in [\![s]\!]^*}
	a_\epsilon(m+\epsilon\cdot \underline{h})\Big).
	$$
	Since $\norm{a_{\underline{0}}}_\infty\leq 1$, the claimed estimate  follows from the Gowers-Cauchy-Schwarz inequality \cite[Lemma~3.8]{Gow01}.
\end{proof}

We  use this  lemma in order to deduce a similar estimate for non-periodic sequences:
\begin{lemma}\label{L:estimate'}
	Let $s\geq 2$, $M\in \mathbb{N}$,  and    $a_\epsilon \colon [(s+1)M]\to \mathbb{C}$, $\epsilon\in [\![s]\!]^*$,  be  bounded by $1$. Then
	\begin{equation}\label{E:estimate}
		\mathbb{E}_{m \in [M]}\Big|\mathbb{E}_{\underline{h}\in [M]^s}\, \prod_{\epsilon\in [\![s]\!]^*}
		a_\epsilon(m+\epsilon\cdot \underline{h})\Big|\leq C_s \Big( \min_{\epsilon\in [\![s]\!]^*}\norm{a_\epsilon}_{U^s(\mathbb{Z}_{(s+1)M})}^\frac{1}{s+1}+ \frac{1}{M}\Big)
	\end{equation}
	where $C_s:=(s+1)^{s+1} ((2s)^s+1)$.
\end{lemma}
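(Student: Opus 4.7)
The plan is to reduce Lemma~\ref{L:estimate'} to the periodic version Lemma~\ref{L:estimate} applied on the group $\mathbb{Z}_{(s+1)M}$, by first converting the constrained averages over $[M]$ into unrestricted averages over $\mathbb{Z}_{(s+1)M}$ weighted by indicator functions, and then removing those weights by a shift-averaging / Hölder argument which produces the exponent $\tfrac{1}{s+1}$.

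First I would identify $[(s+1)M]$ with $\mathbb{Z}_{(s+1)M}$, so each $a_\epsilon$ becomes a function on $\mathbb{Z}_{(s+1)M}$, and observe the crucial ``no wraparound'' property: for $m\in[M]$ and $\underline{h}\in[M]^s$, every linear combination $m+\epsilon\cdot\underline{h}$ with $\epsilon\in[\![s]\!]^*$ lies in $[(s+1)M]$, so the evaluation $a_\epsilon(m+\epsilon\cdot\underline{h})$ is the same whether we interpret the sum in $\mathbb{N}$ or in $\mathbb{Z}_{(s+1)M}$. Using the identity $\mathbb{E}_{x\in[M]}f(x)=(s+1)\,\mathbb{E}_{x\in\mathbb{Z}_{(s+1)M}}\one_{[M]}(x)\,f(x)$ on the $m$-average and on each of the $s$ averages $h_i$, and bounding the outer weight $\one_{[M]}(m)\leq 1$, the left-hand side of \eqref{E:estimate} is dominated by
\[
(s+1)^{s+1}\,\mathbb{E}_{m\in\mathbb{Z}_{(s+1)M}}\Bigl|\mathbb{E}_{\underline{h}\in\mathbb{Z}_{(s+1)M}^s}\prod_{i=1}^s\one_{[M]}(h_i)\prod_{\epsilon\in[\![s]\!]^*}a_\epsilon(m+\epsilon\cdot\underline{h})\Bigr|.
\]

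To dispose of the weights $\prod_{i}\one_{[M]}(h_i)$, I would partition $\mathbb{Z}_{(s+1)M}$ into $2s$ arcs of length roughly $\tfrac{(s+1)M}{2s}$ and shift-average each variable $h_i$ through these $2s$ positions. Because the $a_\epsilon$-product is a cube-linear expression, after taking the translation-average over the $(2s)^s$ possible shift vectors $\underline{t}\in[2s]^s$ the weighted product $\prod_i \one_{[M]}(h_i)$ is replaced, up to a boundary discrepancy of size $O((2s)^s/M)$ coming from arcs that straddle the boundary of $[M]$, by the constant $\prod_i (2s)^{-1} = (2s)^{-s}$ times the unweighted product; the combinatorial constant $(2s)^s$ in $C_s$ is precisely the size of the shift orbit and the $+1$ accounts for the single unshifted contribution. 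After this step, the averaged expression with no indicator weights falls into the scope of Lemma~\ref{L:estimate} on $\mathbb{Z}_{(s+1)M}$, yielding the bound $\prod_{\epsilon}\norm{a_\epsilon}_{U^s(\mathbb{Z}_{(s+1)M})}$, which, using $|a_\epsilon|\leq 1$, is at most $\norm{a_{\epsilon^{*}}}_{U^s(\mathbb{Z}_{(s+1)M})}$ for the minimising $\epsilon^{*}\in[\![s]\!]^*$.

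The exponent $\tfrac{1}{s+1}$ in the final bound arises from Hölder's inequality applied in the $s+1$ ``directions'' of the ambient space $\mathbb{Z}_{(s+1)M}^{s+1}$: because the shift-averaging step preserves equidistribution only in one direction at a time, combining the $s+1$ resulting estimates by Hölder trades a single $\norm{a_{\epsilon^{*}}}_{U^s(\mathbb{Z}_{(s+1)M})}$ for $\norm{a_{\epsilon^{*}}}_{U^s(\mathbb{Z}_{(s+1)M})}^{1/(s+1)}$, while the $1/M$ term collects all boundary discrepancies from the shifts.

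The main obstacle is performing the shift-averaging step while preserving enough of the cube structure of $\prod_{\epsilon}a_\epsilon(m+\epsilon\cdot\underline{h})$ to make Lemma~\ref{L:estimate} applicable; a naive Cauchy--Schwarz on the weights $\prod_i\one_{[M]}(h_i)$ loses the Gowers-norm alignment among the vertex functions and produces a weaker bound involving products of Gowers norms with uncontrolled exponents. Getting both the constant $(2s)^s+1$ and the exponent $\tfrac{1}{s+1}$ to come out cleanly, together with an error of exactly $O(1/M)$ rather than $O((\log M)/M)$, is the delicate point.
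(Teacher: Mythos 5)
The proposal matches the paper's first step — inflating to $\mathbb{Z}_{(s+1)M}$ with a factor $(s+1)^{s+1}$ and writing each $h_i$-average with an indicator weight $\one_{[M]}(h_i)$ — but the proposed mechanism for removing those indicator weights does not work, and the source of the exponent $\tfrac{1}{s+1}$ is misattributed.

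The shift-averaging step fails because the weight and the cube are coupled: replacing $\underline{h}$ by $\underline{h}+\underline{t}$ changes \emph{every} argument $m+\epsilon\cdot\underline{h}$ by $\epsilon\cdot\underline{t}$, so the product $\prod_{\epsilon}a_\epsilon(m+\epsilon\cdot\underline{h})$ is not fixed while the indicator is averaged out; nothing ``factors.'' And even ignoring this, one cannot replace $\one_{[M]}(h_i)$ by a constant with error $O(1/M)$: the function $\one_{[M]}-\tfrac{1}{s+1}$ has $L^1$-mass of order $1$ in $\mathbb{Z}_{(s+1)M}$, and the $\ell^1$-norm of the Fourier transform of the sharp cutoff $\one_{[M]}$ is of order $\log M$, which is exactly the logarithmic loss you yourself flag as the delicate point but do not avoid. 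Finally, the Hölder story for the exponent $\tfrac{1}{s+1}$ is not a proof; applying Hölder across $s+1$ coordinates to a single quantity bounded by $U$ does not yield $U^{1/(s+1)}$ without a concrete family of estimates to interpolate.

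The paper's actual argument handles precisely these two difficulties. It replaces each $\one_{[M]}(h_j)$ by a trapezoidal bump $\phi$ supported on $[0,M]$ with ramps of width $R$; by telescoping, the replacement cost is $O(sR/\tilde{M})$. The smooth weight satisfies $\|\hat{\phi}\|_{\ell^1(\mathbb{Z}_{\tilde M})}\le \tilde M/R$, so expanding each $\phi(h_j)$ in Fourier and pulling out the worst frequency replaces the weights by exponentials $\e(h_j\xi_j/\tilde M)$ at a multiplicative cost $(\tilde M/R)^s$; since $s\ge 2$, these exponentials can be absorbed into modulations of the $a_\epsilon$ without changing their $U^s(\mathbb{Z}_{\tilde M})$ norms, whereupon Lemma~\ref{L:estimate} gives the bound $U:=\min_{\epsilon}\|a_\epsilon\|_{U^s(\mathbb{Z}_{\tilde M})}$. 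The total bound $(s+1)^{s+1}\bigl(2sR/\tilde M+(\tilde M/R)^s U\bigr)$ is then optimized over the free smoothing parameter $R$, taking $R\approx U^{1/(s+1)}\tilde M$; this optimization, not Hölder, is where the exponent $\tfrac{1}{s+1}$ comes from. Your proposal is missing the smoothing-and-Fourier mechanism entirely, and without it there is no valid way to strip the indicators or to extract the stated power of the Gowers norm.
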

\begin{proof}
	Let $\tM:=(s+1)M$.
	We first reduce matters to estimating a similar average over $\mathbb{Z}_{\tM}$. Let $\underline{h}=(h_1,\ldots, h_s)$ and notice that the average in \eqref{E:estimate} is bounded by $(s+1)^{s+1}$ times
	\begin{equation}\label{E:estimate'}
		\mathbb{E}_{m \in \mathbb{Z}_{\tM}}\Big|\mathbb{E}_{\underline{h}\in \mathbb{Z}_{\tM}^{s}}\,  \prod_{j=1}^s \one_{[M]}(h_j)\cdot
		\prod_{\epsilon\in [\![s]\!]^*}
		a_\epsilon(m+\epsilon\cdot \underline{h})\Big|
	\end{equation}
	where the sums $m+\epsilon\cdot \underline{h}$ are taken $\! \! \pmod{\tM}$.

	Next, we reduce matters to estimating a similar average that does not contain the indicator functions.
	Let $R$ be an integer that will be specified later and satisfies
	$0<R< M/2$. We define the ``trapezoid function'' $\phi$ on $\mathbb{Z}_{\tM}$ so
	that $\phi(0)=0$, $\phi$ increases linearly from $0$ to $1$ on the
	interval $[0,R]$, $\phi(r)=1$ for $R\leq r\leq M-R$, $\phi$
	decreases linearly from
	$1$ to $0$ on $[M-R,M]$, and $\phi(r)=0$ for $M<r<\tM$.

	After telescoping, we see that the absolute value of the difference between the average \eqref{E:estimate'}  and the average
	\begin{equation}\label{E:estimate''}
		\mathbb{E}_{m \in \mathbb{Z}_{\tM}}\Big|\mathbb{E}_{\underline{h}\in \mathbb{Z}_{\tM}^{s}}\,  \prod_{j=1}^s \phi(h_j)\cdot
		\prod_{\epsilon\in [\![s]\!]^*}
		a_\epsilon(m+\epsilon\cdot \underline{h})\Big|
	\end{equation}
	is bounded by $2s R/ \tM$.
	Moreover, it is classical that
	$$
	\norm{\widehat\phi}_{l^1(\mathbb{Z}_{\tM})}\leq \frac{2M}R\leq \frac{\tM}R
	$$
	and thus \eqref{E:estimate''} is bounded by
	$$
	\frac{\tM^s}{R^s}\,\max_{\xi_1,\ldots, \xi_s \in\mathbb{Z}_\tM}\mathbb{E}_{m \in \mathbb{Z}_{\tM}}\Big|\mathbb{E}_{\underline{h}\in \mathbb{Z}_{\tM}^{s}}\,  \prod_{j=1}^s \e(h_j\xi_j/\tM)\cdot
	\prod_{\epsilon\in [\![s]\!]^*}
	a_\epsilon(m+\epsilon\cdot \underline{h})\Big|.
	$$

	For $j=1,\ldots, s,$  let $\epsilon_j\in [\![s]\!]^*$ be the element  that has $1$  in the $j$-th coordinate and $0$'s elsewhere.
	Upon replacing $a_{\epsilon_j}(n)$ with $a_{\epsilon_j}(n)\, \e(-n\xi_j/\tM)$, $j=1,\ldots, s$,
	and $a_{(1,\ldots,1)}(n)$ with $a_{(1,\ldots,1)}(n)\, \e(n(\xi_1+\cdots+\xi_s)/\tM)$, and leaving all other sequences unchanged,
	the $U^{s}(\mathbb{Z}_\tM)$-norm of all sequences remains unchanged (we use here that $s\geq 2$) and the term $\prod_{j=1}^s \e(h_j\xi_j/\tM)$ disappears. We are thus left with estimating
	the average
	$$
	\mathbb{E}_{m \in \mathbb{Z}_{\tM}}\Big|\mathbb{E}_{\underline{h}\in \mathbb{Z}_{\tM}^{s}}\,
	\prod_{\epsilon\in [\![s]\!]^*}
	a_\epsilon(m+\epsilon\cdot \underline{h})\Big|.
	$$
	Since the sequences $a_\epsilon, \epsilon\in [\![s]\!]^*$, are bounded by $1$, Lemma~\ref{L:estimate}  implies that the last expression is bounded by
	$$ U:= \min_{\epsilon\in [\![s]\!]^*}\norm{a_\epsilon}_{U^s(\mathbb{Z}_{\tM})}.
	$$

	Combining the preceding estimates, we get that  the average in the statement is bounded
	by $(s+1)^{s+1}$ times
	$$
	\frac {2sR}{\tM} +
	\frac{ \tM^s}{R^s} U.
	$$
	Choosing
	$R:= \lfloor U^\frac{1}{s+1} \tM/(4s)\rfloor+1$ (then $R\leq
	M/2$ for $M\geq 5$)
	we get that  the last quantity is bounded by
	$$
	((4s)^s+1)\, U^\frac{1}{s+1}+\frac{2}{M}
	$$
	when $M\geq 5$. When $M\leq 4$ the asserted estimate is trivial,  completing the proof.
\end{proof}

\subsection{Two decompositions}
We will use the following infinitary decomposition result which is proved using  tools from ergodic theory.
\begin{proposition}\label{P:InfiniteDec}
	Let $s\in \mathbb{N}$ and suppose that the sequence $a\in \ell^\infty(\mathbb{N})$ is   ergodic for Ces\`aro averages  on the sequence of intervals $\mathbf{I}=(I_N)_{N\in\mathbb{N}}$.  Then for every $s\in \mathbb{N}$  the sequence $A\colon\mathbb{N}^s\to\mathbb{C}$ defined by
	$$
	A(\underline{h}):=\mathbb{E}_{n \in\mathbf{I}}\prod_{\epsilon\in [\![s]\!]}\mathcal{C}^{|\epsilon|} a(n+\epsilon\cdot \underline{h}), \quad \underline{h}\in \mathbb{N}^s,
	$$
	admits a decomposition of the form $A=A_\st+A_{\er}$ such that
	\begin{enumerate}
		\item
		$A_\st\colon \mathbb{N}^s\to \mathbb{C}$ is a uniform limit of sequences of the form
		$$
		\underline{h} \mapsto \int \, A_{\text{st},x}(\underline{h}) \, d\mu(x), \quad \underline{h}\in \mathbb{N}^s,
		$$
		where the integration takes place on a probability space $(X,\mathcal{X},\mu)$,  and for $x\in X$ the sequence $A_{\text{st},x}\colon \mathbb{N}^s\to \mathbb{C}$ is defined by
		$$
		A_{\text{st},x}(\underline{h}):=\mathbb{E}_{n \in\mathbf{I}}\prod_{\epsilon\in [\![s]\!]}\mathcal{C}^{|\epsilon|} \phi_x(n+\epsilon\cdot \underline{h}), \quad \underline{h}\in \mathbb{N}^s,
		$$
		where the sequence $\phi_x\colon \mathbb{N}\to \mathbb{C}$
		is an $s$-step  nilsequence with $\norm{\phi_x}_\infty\leq \norm{a}_\infty$, and for $n\in \mathbb{N}$ the map
		$x\mapsto \phi_x(n)$ is $\mu$-measurable; and
		
		\item
		$\lim_{M\to\infty}\mathbb{E}_{\underline{h} \in [M]^s}|A_\er(\underline{h})|=0$.
	\end{enumerate}
\end{proposition}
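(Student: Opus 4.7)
The plan is to transfer the decomposition to the Furstenberg system attached to $(a,\mathbf{I})$ and then apply the Host--Kra structure theorem. By Proposition~\ref{P:correspondence} and the ergodicity hypothesis, there exist an ergodic system $(X,\mathcal{X},\mu,T)$ and $F\in L^\infty(\mu)$ with $\norm{F}_\infty\le \norm{a}_\infty$ such that $A(\underline{h})=\int \prod_{\epsilon\in[\![s]\!]} \mathcal{C}^{|\epsilon|} T^{\epsilon\cdot\underline{h}} F\, d\mu$ for every $\underline{h}\in\mathbb{N}^s$. Using the Host--Kra factors from Section~\ref{SS:GHKseminorms}, decompose $F=F_1+F_2$ with $F_1:=\mathbb{E}(F\mid \mathcal{Z}_s)$; conditional expectation preserves the $L^\infty$ bound, and the defining property of $\mathcal{Z}_s$ gives $\nnorm{F_2}_{s+1}=0$. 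Define
$$
A_\st(\underline{h}):=\int\prod_{\epsilon\in[\![s]\!]}\mathcal{C}^{|\epsilon|} T^{\epsilon\cdot\underline{h}} F_1\, d\mu,\qquad A_\er:=A-A_\st.
$$

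The main obstacle will be to show $\mathbb{E}_{\underline{h}\in[M]^s}|A_\er(\underline{h})|\to 0$: the essential difficulty is bounding an \emph{absolute value} rather than a signed average. For this I would expand $A_\er$ as a finite sum of correlations $T_\star(\underline{h})=\int \prod_\epsilon \mathcal{C}^{|\epsilon|} T^{\epsilon\cdot\underline{h}} G_\epsilon\, d\mu$ in which each $G_\epsilon\in\{F_1,F_2\}$ and at least one equals $F_2$. Rewriting
$$
|T_\star(\underline{h})|^2 \;=\; \int_{X\times X}\prod_\epsilon H_\epsilon\circ(T\times T)^{\epsilon\cdot\underline{h}}\, d(\mu\times\mu)
$$
with $H_\epsilon=G_\epsilon\otimes \overline{G_\epsilon}$ (up to swapping factors for odd $|\epsilon|$), the Gowers--Cauchy--Schwarz inequality on the product system \cite{HK05a} together with the standard Host--Kra identity $\nnorm{F_2\otimes \overline{F_2}}_{s,T\times T}=\nnorm{F_2}_{s+1,T}^{2}=0$ yields $\lim_{M\to\infty}\mathbb{E}_{\underline{h}\in[M]^s}|T_\star(\underline{h})|^2=0$. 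A final Cauchy--Schwarz in $\underline{h}$ gives $\mathbb{E}_{\underline{h}\in[M]^s}|T_\star|\to 0$, and summing over the finitely many terms establishes~(2).

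For~(1) I would invoke Theorem~\ref{T:HK}: $(X,\mathcal{Z}_s,\mu,T)$ is an inverse limit of ergodic $s$-step nilsystems $(Y_k,m_{Y_k},T_{b_k})$ with sub-$\sigma$-algebras $\mathcal{Z}_{s,k}\uparrow\mathcal{Z}_s$. Set $F_1^{(k)}:=\mathbb{E}(F_1\mid \mathcal{Z}_{s,k})$, which is bounded by $\norm{a}_\infty$ and converges to $F_1$ in $L^1(\mu)$ by martingale convergence. Let $\tilde F_1^{(k)}\in L^\infty(Y_k)$ correspond to $F_1^{(k)}$ under the factor isomorphism and define the $s$-step nilsequence $\phi_y^{(k)}(n):=\tilde F_1^{(k)}(b_k^n\cdot y)$, which is bounded by $\norm{a}_\infty$ with $y\mapsto\phi_y^{(k)}(n)$ measurable. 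Unique ergodicity of ergodic nilsystems ensures that $A_{\st,y}^{(k)}(\underline{h})$ exists for every $y\in Y_k$ and $\underline{h}\in\mathbb{N}^s$, and dominated convergence combined with $b_k$-invariance of $m_{Y_k}$ yields
$$
\int_{Y_k}A_{\st,y}^{(k)}(\underline{h})\, dm_{Y_k}(y) \;=\; \int_{Y_k}\prod_\epsilon\mathcal{C}^{|\epsilon|}\tilde F_1^{(k)}(b_k^{\epsilon\cdot\underline{h}}\cdot y)\, dm_{Y_k}(y) \;=:\; A_\st^{(k)}(\underline{h}).
$$
Expanding $A_\st-A_\st^{(k)}$ into terms, each containing at least one factor $F_1-F_1^{(k)}$ paired with bounded remaining factors, gives $|A_\st-A_\st^{(k)}|\le C_s\norm{a}_\infty^{2^s-1}\norm{F_1-F_1^{(k)}}_{L^1(\mu)}$ uniformly in $\underline{h}$, so $A_\st^{(k)}\to A_\st$ uniformly as $k\to\infty$, completing~(1).
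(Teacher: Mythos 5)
Your overall strategy—pass to the Furstenberg system, project $F$ onto $\mathcal{Z}_s$, control the error via the Host--Kra seminorm vanishing, and use the inverse‑limit structure from Theorem~\ref{T:HK} for the structured part—is exactly the paper's route; the only surface difference is that you re‑derive the error estimate by squaring onto the product system instead of quoting \cite[Theorem~13.1]{HK05a} directly, and that is precisely what that theorem encapsulates. However, there is one genuine gap in the structured part. You take $\tilde F_1^{(k)}\in L^\infty(Y_k)$ (merely bounded and measurable) and then (a) call $\phi_y^{(k)}(n)=\tilde F_1^{(k)}(b_k^n\cdot y)$ an $s$-step nilsequence, and (b) invoke unique ergodicity of the nilsystem to conclude that the limit $A^{(k)}_{\st,y}(\underline{h})=\mathbb{E}_{n\in\mathbf{I}}\prod_{\epsilon}\mathcal{C}^{|\epsilon|}\phi_y^{(k)}(n+\epsilon\cdot\underline{h})$ exists for every $y\in Y_k$. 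Neither step is justified for a bare $L^\infty$ function: the paper's definition of nilsequence in Section~\ref{SS:nilsequences} requires the observable to lie in $C(X)$, and unique ergodicity of $(Y_k,T_{b_k})$ gives pointwise (for every $y$) convergence of the averages along $\mathbf{I}$ only for continuous observables, not for arbitrary $L^\infty$ observables (which are anyway only defined up to $m_{Y_k}$-null sets, so $\phi_y^{(k)}(n)$ is not even a well-defined number for every $y$).

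The fix is a further approximation step that the paper carries out implicitly: after passing to the factor nilsystem $Y_k$, approximate $\tilde F_1^{(k)}$ in $L^1(m_{Y_k})$ by a continuous function $\Phi_k\in C(Y_k)$ with $\norm{\Phi_k}_\infty\le\norm{a}_\infty$ (truncate a Lusin/Urysohn approximant). Then take $\phi_y^{(k)}(n):=\Phi_k(b_k^n\cdot y)$; this is an honest $s$-step nilsequence, unique ergodicity gives existence of $A^{(k)}_{\st,y}(\underline{h})$ for every $y$ and every $\underline{h}$, and the same telescoping $L^1$ bound you use for the martingale step, applied once more to $\tilde F_1^{(k)}-\Phi_k$, preserves the uniform-in-$\underline{h}$ convergence of the approximants to $A_\st$. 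With this insertion your argument matches the paper's proof.
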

\begin{remarks}
	$\bullet$ The ergodicity assumption  in this statement is a convenience; we can prove a similar statement without it by using the decomposition result in \cite[Proposition~3.1]{CFH11} in place of Theorem~\ref{T:HK}.
	
	$\bullet$ It can be shown that the sequence $A_\st$ is  a uniform limit of $s$-step nilsequences in $s$ variables, but such a decomposition result is  less useful for our purposes.	
\end{remarks}
\begin{proof}
	Let $(X,\mathcal{X},\mu,T)$ be the ergodic system and $F\in L^\infty(\mu)$ be the function associated to  $(a(n))_{n\in\mathbb{N}}$ and  $\mathbf{I}$ by the  correspondence principle of Proposition~\ref{P:correspondence}.
	Then
	$$
	A(\underline{h}):=\int \prod_{\epsilon\in [\![s]\!]}\mathcal{C}^{|\epsilon|} T^{\epsilon\cdot \underline{h}}F\, d\mu, \quad \underline{h}\in \mathbb{N}^s.
	$$
	We set
	$$
	A_{\text{st}}(\underline{h}):=\int \prod_{\epsilon\in [\![s]\!]}\mathcal{C}^{|\epsilon|} T^{\epsilon\cdot \underline{h}}F_{\text{st}}\, d\mu, \quad \underline{h}\in \mathbb{N}^s,
	$$
	where $F_{\text{st}}:=\mathbb{E}(F|\mathcal{Z}_{s})$ is the orthogonal projection of $F$ onto $L^2(\mathcal{Z}_{s})$ and $\mathcal{Z}_{s}$ is the $\sigma$-algebra defined  in Section~\ref{SS:GHKseminorms}. Furthermore, we let
	$$
	A_{\text{er}}:=A-A_{\text{st}}.
	$$

	We  first deal with  the sequence  $A_\er$. It follows  from  \cite[Theorem~13.1]{HK05a}  that if $F_\epsilon\in L^\infty(\mu)$ for all   $\epsilon\in  [\![s]\!]$ and  $\nnorm{F_\epsilon}_{s+1,T}=0$ for some $\epsilon\in  [\![s]\!]$, then
	$$
	\lim_{M\to \infty}\mathbb{E}_{\underline{h}\in [M]^s}\Big| \int \prod_{\epsilon\in [\![s]\!]} T^{\epsilon\cdot \underline{h}}F_{\epsilon}\, d\mu\Big|=0.
	$$
	Using this, telescoping, and since $F-F_{\text{st}}\, \bot \, L^2(\mathcal{Z}_{s})$ implies $\nnorm{F-F_{\text{st}}}_{s+1,T}=0$, we deduce that
	$$
	\lim_{M\to\infty}\mathbb{E}_{\underline{h}\in [M]^s}|A_\er(\underline{h})|=0.
	$$
	
	Next we establish the asserted  structural property  of the sequence $A_\st$.  Theorem~\ref{T:HK}  gives that   the system $(X,\mathcal{Z}_{s},\mu,T)$ is  an inverse limit of $s$-step nilsystems. It follows from this that the sequence $A_\st$ is a uniform limit of sequences of the form
	$$
	\underline{h}\mapsto
	\int \prod_{\epsilon\in [\![s]\!]}\mathcal{C}^{|\epsilon|}   \Phi(b^{\epsilon\cdot \underline{h}}x)\, d m_X, \quad \underline{h}\in \mathbb{N}^s,
	$$
	where $X=G/\Gamma$ is an $s$-step nilmanifold, $b\in G$, $m_X$ is the normalized Haar measure of $X$,  and $\Phi\in C(X)$ satisfies
	$$
	\norm{\Phi}_\infty\leq \norm{F_{st}}_\infty\leq \norm{F}_\infty=\norm{a}_\infty.
	$$
	As remarked in Section~\ref{SS:nilmanifolds}, for every $\underline{h}\in \mathbb{N}^s$ the limit $\mathbb{E}_{n\in\mathbf{I}}\,  \prod_{\epsilon\in [\![s]\!]}\mathcal{C}^{|\epsilon|}  \Phi(b^{n+\epsilon\cdot \underline{h}}x)$
	exists. Using this property, the bounded convergence theorem, and
	the preservation of $m_X$ by left translation by $b^n, n\in\mathbb{N}$, we get 	
		$$
		\int \prod_{\epsilon\in [\![s]\!]}\mathcal{C}^{|\epsilon|}  \Phi(b^{\epsilon\cdot \underline{h}}x)\, d m_X
		=\int \mathbb{E}_{n\in\mathbf{I}}\,  \prod_{\epsilon\in [\![s]\!]}\mathcal{C}^{|\epsilon|}  \Phi(b^{n+\epsilon\cdot \underline{h}}x)\, d m_X
		=\int \mathbb{E}_{n\in \mathbf{I}}\,   \prod_{\epsilon\in [\![s]\!]}\mathcal{C}^{|\epsilon|}  \phi_x(n+\epsilon\cdot \underline{h})\, d m_X,
	$$
	where for $x\in  X$ we let $\phi_x(n):=\Phi(b^nx)$, $n\in\mathbb{N}$.
	Note that  $(\phi_x(n))_{n\in\mathbb{N}}$ is an  $s$-step nilsequence for every $x\in X$,
	$\norm{\phi_x}_\infty\leq \norm{\Phi}_\infty\leq \norm{a}_\infty$,
	and  for fixed $n\in\mathbb{N}$ the map $x\mapsto \phi_x(n)=\Phi(b^nx)$ is $m_X$-measurable. This completes the proof.
\end{proof}

The next result is proved in \cite{GT10} using the finitary inverse theorem for the Gowers uniformity norms in \cite{GTZ12c}.
Recall that for a given nilmanifold $X$ the set $\Psi_X$ was defined in Section~\ref{SS:bddcomplexity}.
\begin{theorem}[Green, Tao \mbox{\cite[Proposition~2.7]{GT10}}]\label{GT10}
	Let $\delta>0$ and $s\geq 2$ be an integer. Then there exist a positive number $L:=L(\delta,s)$ and an  $(s-1)$-step nilmanifold $X=X(\delta,s)$ such that the following holds: For every large enough  $M\in \mathbb{N}$,
	every   $a\colon [M]\to \mathbb{C}$ that  is bounded by $1$, admits a
	decomposition $a=a_{M,\st}+a_{M,\un}$ such that
	\begin{enumerate}
		\item
		$a_{M,\st}$ is an $(s-1)$-step nilsequence  in $L\cdot \Psi_X$ and $\norm{a_{N,\st}}_\infty\leq 4$; and
		\item
		$\norm{a_{M,\text{un}}}_{U^s(\mathbb{Z}_M)}\leq \delta$.
	\end{enumerate}
	\begin{remark}
		The proof in \cite{GT10} is given for sequences with values in $[0,1]$, the stated result
		follows since any sequence with values
		on the unit disc is a complex linear combination of four  sequences with values in $[0,1]$. Moreover, the result  in \cite{GT10} is given for the $U^s[M]$ norms, the stated version follows since
		for large enough $M\in \mathbb{N}$ the  $U^s(\mathbb{Z}_{M})$ and $U^s[M]$ norms are
		comparable (see \cite[Lemma~A.4]{FH15a}).
		Lastly,   the statement in \cite{GT10} contains a third term that is  small in  $L^2[M]$,
		this term has been absorbed in the $a_{M,\text{un}}$ term in our statement.
	\end{remark}
\end{theorem}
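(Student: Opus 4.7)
The plan is to deduce this finitary decomposition from the Green-Tao-Ziegler inverse theorem for the Gowers uniformity norms $U^s(\mathbb{Z}_M)$ via a standard iterative energy-increment procedure (equivalently, via a Hahn-Banach style dual-function construction as in the appendix of the Green-Tao ``Linear equations in primes'' paper). The inverse theorem asserts the existence of constants $c=c(\delta,s)>0$, $L_0=L_0(\delta,s)>0$, and an $(s-1)$-step nilmanifold $X_0=X_0(\delta,s)$, all independent of $M$, such that whenever $g\colon [M]\to\mathbb{C}$ is $1$-bounded with $\norm{g}_{U^s(\mathbb{Z}_M)}\geq\delta$, there exists $\phi\in L_0\cdot\Psi_{X_0}$ satisfying $|\mathbb{E}_{n\in[M]}\, g(n)\,\overline{\phi(n)}|\geq c$. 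Because $X_0$ is a single nilmanifold (not a family indexed by $M$), this is the place where the quantitative uniformity of the GTZ theorem enters essentially.

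First I would run the following iteration. Set $g_0:=a$. At stage $k$, if $\norm{g_k}_{U^s(\mathbb{Z}_M)}\leq\delta$, stop; otherwise apply the inverse theorem to obtain $\phi_k\in L_0\cdot\Psi_{X_0}$ with $|\mathbb{E}_{n\in[M]}\, g_k(n)\,\overline{\phi_k(n)}|\geq c$, and set $g_{k+1}:=g_k-\alpha_k\phi_k$ where $\alpha_k:=\mathbb{E}_{n\in[M]}\, g_k(n)\,\overline{\phi_k(n)}/\mathbb{E}_{n\in[M]}|\phi_k(n)|^2$ is the optimal $L^2[M]$-projection coefficient. By Pythagoras the $L^2[M]$-energy drops at each stage by at least $c^2/L_0^2$; since $\norm{g_0}_2\leq 1$, the iteration halts after at most $K:=\lceil L_0^2/c^2\rceil$ steps, a bound depending only on $\delta$ and $s$. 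The residual $g_K$ is the candidate $a_{M,\un}$ and satisfies condition (2) by construction.

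For the structured piece, set $a_{M,\st}:=\sum_{k<K}\alpha_k\phi_k$, a finite sum of at most $K(\delta,s)$ nilsequences, each from $L_0\cdot\Psi_{X_0}$, with coefficients satisfying $|\alpha_k|\leq L_0$ (using that $\phi_k$ has $\lip(X_0)$-norm at most $L_0$). Taking $X:=X_0^K$ with the product Riemannian metric and $L:=KL_0^2$, the observation recorded in Section~\ref{SS:bddcomplexity} (that products of nilmanifolds absorb sums and products of their $\Psi$-classes) shows that $a_{M,\st}\in L\cdot\Psi_X$, with both $L$ and $X$ depending only on $\delta$ and $s$. To obtain the $L^\infty$ bound $\norm{a_{M,\st}}_\infty\leq 4$, one decomposes the complex-valued $a$ as $a=(a_1^+-a_1^-)+i(a_2^+-a_2^-)$ into four $[0,1]$-valued pieces, runs the iteration separately on each (truncating the projection step, or alternatively projecting onto a finite $\sigma$-algebra generated by level sets of $\phi_0,\ldots,\phi_{K-1}$, so as to preserve the $[0,1]$-range of the structured part of each piece without spoiling the $U^s$-smallness of the error), and recombines: each of the four structured parts is $L^\infty$-bounded by $1$, so their combination is bounded by $4$.

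The main obstacle is of course the Green-Tao-Ziegler inverse theorem itself, which is invoked as a black box. Beyond that, the nontrivial point is ensuring that $L$ and $X$ are genuinely independent of $M$; this requires both that the nilmanifold $X_0$ supplied by the inverse theorem is uniform in $M$ and that the iteration count $K$ is bounded uniformly in $M$, the latter guaranteed by the $L^2[M]$-energy argument above. The final $L^\infty$ bookkeeping to reach the constant $4$ is technically fiddly but conceptually routine, while showing that $K$-fold sums of bounded-complexity nilsequences lie in the $\Psi$-class of a single product nilmanifold is the only place one must carefully track the Lipschitz constants under the product metric.
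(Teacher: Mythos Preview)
The paper does not prove this statement; it is quoted verbatim from Green--Tao \cite[Proposition~2.7]{GT10} and used as a black box. The accompanying remark only records three cosmetic adjustments needed to pass from the formulation in \cite{GT10} to the one stated here: extending from $[0,1]$-valued to disc-valued sequences by splitting into four real pieces, replacing $U^s[M]$ by $U^s(\mathbb{Z}_M)$ via their comparability, and absorbing the $L^2$-small third term into $a_{M,\un}$. Your proposal instead sketches the proof of the underlying result in \cite{GT10} itself, namely the energy-increment argument driven by the GTZ inverse theorem; in spirit this is exactly what Green and Tao do there, so you are reproducing the cited proof rather than anything the present paper supplies.

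One genuine gap in your sketch: in the iteration, $g_1=g_0-\alpha_0\phi_0$ is no longer $1$-bounded, so you cannot reapply the inverse theorem with the \emph{same} parameter $\delta$ at the next step. Rescaling $g_k$ forces you to invoke the inverse theorem with a smaller parameter, which changes $c$, $L_0$, and $X_0$; since the number of steps $K$ is determined by $c$, and $c$ now depends on $K$ through the growing $L^\infty$-bounds, the argument as written is circular and does not terminate with data independent of $M$. The fix you mention parenthetically---replacing the orthogonal projection by a conditional expectation onto the $\sigma$-algebra generated by level sets of the $\phi_k$, so that the structured part stays $[0,1]$-valued automatically---is not a cosmetic variant but the actual mechanism in \cite{GT10}, and you should run that version (or the Hahn--Banach dual-function argument you also allude to) from the outset rather than the linear subtraction scheme.
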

We will combine  Lemma~\ref{L:estimate'} and Theorem~\ref{GT10}  in order to establish the following finitary decomposition result:
\begin{proposition}\label{P:FiniteDec}
	For every $\varepsilon>0$ and $s\geq 2$  there exist $C=C(\varepsilon,s)>0$ and an  $(s-1)$-step  nilmanifold $Y=Y(\varepsilon, s)$ such that the following holds: For every large enough  $M\in \mathbb{N}$ and  every
	$a\colon [(s+1)M]\to \mathbb{C}$ that is bounded by $1$,
	the sequence $A\colon [M]\to\mathbb{C}$ defined by
	$$
	A(m):=\mathbb{E}_{\underline{h}\in [M]^s}\, \prod_{\epsilon\in [\![s]\!]^*}\mathcal{C}^{|\epsilon|}
	a(m+\epsilon\cdot \underline{h}), \quad m\in [M],
	$$
	admits a decomposition of the form $A=A_{M,\st}+A_{M,\er}$ such that
	\begin{enumerate}
		\item
		$
		A_{M,\st}(m):=\mathbb{E}_{\underline{h}\in [M]^s}\,  \psi_{M,\underline{h}}(m),$ $m\in [M],$
		where 	$\psi_{M,\underline{h}}\in C\cdot \Psi_Y$ for all  $\underline{h} \in [M]^s$; and
		\item
		$
		\mathbb{E}_{m \in [M]}|A_{M,\er}(m)|\leq\varepsilon$.
	\end{enumerate}
\end{proposition}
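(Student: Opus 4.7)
The plan is to apply the Green--Tao finitary decomposition (Theorem~\ref{GT10}) to the single sequence $a$ on the interval $[(s+1)M]$ and then expand. Given $\varepsilon>0$ and $s\geq 2$, choose a small parameter $\delta>0$ (to be specified) and apply Theorem~\ref{GT10} with parameters $\delta,s$ and with $M$ replaced by $(s+1)M$. This yields a positive $L=L(\delta,s)$, an $(s-1)$-step nilmanifold $X=X(\delta,s)$, and a decomposition
$$
a=a_{\st}+a_{\un}
$$
on $[(s+1)M]$, where $a_{\st}\in L\cdot\Psi_X$ with $\norm{a_{\st}}_\infty\leq 4$, and $\norm{a_{\un}}_{U^s(\mathbb{Z}_{(s+1)M})}\leq\delta$ (in particular $\norm{a_{\un}}_\infty\leq 5$). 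Substituting this into the defining formula for $A(m)$ and expanding the product over $\epsilon\in[\![s]\!]^*$, I obtain $2^{2^s-1}$ summands, one with every factor equal to some $\mathcal{C}^{|\epsilon|}a_{\st}(m+\epsilon\cdot\underline{h})$ and the remaining $2^{2^s-1}-1$ involving at least one factor of $a_{\un}$. I will set $A_{M,\st}$ equal to the first summand and $A_{M,\er}$ equal to the sum of the remaining ones.

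For the structured term, I will show $A_{M,\st}(m)=\mathbb{E}_{\underline{h}\in[M]^s}\psi_{M,\underline{h}}(m)$ where each $\psi_{M,\underline{h}}$ lies in $C\cdot\Psi_Y$ for the product nilmanifold $Y:=X^{2^s-1}$, which is again $(s-1)$-step. The point is that $a_{\st}$ has the form $\Psi(b^n\cdot x_0)$ for some $\Psi\in\lip(X)$ with $\norm{\Psi}_{\lip(X)}\leq L$; a shift by $k$ is $\Psi(b^n\cdot(b^k x_0))$, still in $L\cdot\Psi_X$; conjugation preserves the Lipschitz norm; and a product of $2^s-1$ elements of $L\cdot\Psi_X$ belongs to $L^{2^s-1}\cdot\Psi_Y$ (via the standard embedding of a product of nilsequences as a single nilsequence on the product nilmanifold with an appropriate metric). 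Thus I can set $C=L^{2^s-1}$.

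For the error term, I will use Lemma~\ref{L:estimate'} applied to rescaled sequences bounded by $1$. Each of the remaining $2^{2^s-1}-1$ summands has the shape
$$
\mathbb{E}_{\underline{h}\in[M]^s}\prod_{\epsilon\in[\![s]\!]^*}\mathcal{C}^{|\epsilon|}b_\epsilon(m+\epsilon\cdot\underline{h}),
$$
where each $b_\epsilon\in\{a_{\st},a_{\un}\}$ and at least one $b_\epsilon=a_{\un}$. Dividing $a_{\st}$ by $4$ and $a_{\un}$ by $5$ makes all factors bounded by $1$; the corresponding $U^s(\mathbb{Z}_{(s+1)M})$-norm of the rescaled uniform factor is at most $\delta/5$, so Lemma~\ref{L:estimate'} yields
$$
\mathbb{E}_{m\in[M]}\Big|\mathbb{E}_{\underline{h}\in[M]^s}\prod_{\epsilon}\mathcal{C}^{|\epsilon|}b_\epsilon(m+\epsilon\cdot\underline{h})\Big|\leq 5^{2^s-1}C_s\bigl((\delta/5)^{1/(s+1)}+1/M\bigr).
$$
Summing over the $2^{2^s-1}-1$ error summands gives a total bound of the form $K_s\bigl(\delta^{1/(s+1)}+1/M\bigr)$ for an explicit constant $K_s$. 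I then pick $\delta=\delta(\varepsilon,s)$ so small that $K_s\,\delta^{1/(s+1)}<\varepsilon/2$, which in turn determines $L$, $X$, and $C$; finally for all $M$ large enough (depending on $\varepsilon,s$), $K_s/M<\varepsilon/2$ and $\mathbb{E}_{m\in[M]}|A_{M,\er}(m)|\leq\varepsilon$.

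The main routine obstacle is the bookkeeping in the structured term: verifying cleanly that a product of shifted, conjugated elements of $\Psi_X$ lands in $\Psi_Y$ for a single $(s-1)$-step nilmanifold $Y$ independent of $\underline{h}$. This is handled by the remark in Section~\ref{SS:bddcomplexity} about products on a product nilmanifold, together with the observation that shifting the starting point does not alter membership in $L\cdot\Psi_X$. The only genuinely substantive ingredient is Lemma~\ref{L:estimate'}, which is already established; the rest of the argument is a careful expansion combined with a judicious choice of the Green--Tao parameter $\delta$.
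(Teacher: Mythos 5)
Your proposal is correct and takes essentially the same route as the paper: apply Theorem~\ref{GT10} to $a$ on $[(s+1)M]$, take $A_{M,\st}$ to be the term built entirely from $a_{\st}$ (and hence an average of products of shifted nilsequences living on $Y=X^{2^s-1}$), and bound the error via Lemma~\ref{L:estimate'} after rescaling so that all factors are bounded by $1$. The only cosmetic difference is that you fully expand $\prod_\epsilon(a_{\st}+a_{\un})$ into $2^{2^s-1}$ summands and estimate each, whereas the paper telescopes $A-A_{M,\st}$ into $2^s-1$ terms each containing a single $a_{\un}$ factor; both are handled by the same estimate and lead to the same choice of $\delta=\delta(\varepsilon,s)$.
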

\begin{proof}
	Let $\varepsilon>0$ and $s\geq 2$.
	We use  the decomposition result of Theorem~\ref{GT10} for  $\delta=\delta(\varepsilon,s)$ to be determined momentarily.  We get an $(s-1)$-step nilmanifold $X=X(\delta,s)$ an $L=L(\delta,s)>0$,
	such that for  every large enough $M\in \mathbb{N}$ we have a decomposition $a(n)=a_{M,\st}(n)+a_{M,\un}(n)$, $n\in [(s+1)M]$, where $a_{M,\st}\in L\cdot \Psi_X$,   $\norm{a_{N,\st}}_\infty\leq 4$,  and 	$\norm{a_{M,\text{un}}}_{U^s(\mathbb{Z}_{(s+1)M})}\leq \delta$.
	
	We let
	$$
	A_{M,\st}(m):= \mathbb{E}_{\underline{h}\in [M]^s}\, \prod_{\epsilon\in [\![s]\!]^*}\mathcal{C}^{|\epsilon|}
	\psi_M(m+\epsilon\cdot \underline{h}),  \quad m\in [M],
	$$
	where 	 $\psi_M:=a_{M,\text{st}}$, and
	$$
	A_{M,\er}:=A-A_{M,\st}.
	$$

	Using Lemma~\ref{L:estimate'} and telescoping, we deduce that if $\delta$ is sufficiently small, depending on $\varepsilon$ and $s$ only, then  $A_{M,\er}$ satisfies $(ii)$.
	
	It remains to deal with the term $A_{M,\st}$.
	Since $\psi_M\in L\cdot \Psi_X$, there exists an $(s-1)$-step nilmanifold $Y=Y(\varepsilon,s)$ (take $Y:=X^{2^s-1}$ with the product metric) and a constant $C=C(\delta,s)>0$ such that for every $M\in\mathbb{N}$ and $\underline{h}\in [M]^s$  the sequence $\psi_{M,\underline{h}}\colon \mathbb{N}\to \mathbb{C}$, defined by $\psi_{M,\underline{h}}(m):=\prod_{\epsilon\in [\![s]\!]^*}\mathcal{C}^{|\epsilon|}
	\psi_M(m+\epsilon\cdot \underline{h})$, $m\in\mathbb{N}$, is in $C\cdot \Psi_Y$.
	This completes the proof.
\end{proof}

\subsection{Proof of Theorem~\ref{T:ergodicInverse} }
Let $s\geq 2$.
Without loss of generality we can assume that $\norm{a}_\infty\leq 1$.

\medskip

\noindent {\bf Step 1 (Using ergodicity).}  Using the ergodicity of $a(n)$for Ces\`aro averages on $\mathbf{I}$ (this is the only place where we make essential use of ergodicity in the proof of Theorem~\ref{T:ergodicInverse}) we get that
$$
\norm{a}_{U^s(\mathbf{I})}^{2^s}=\mathbb{E}_{\underline{h}\in \mathbb{N}^{s-1}}\Big|\mathbb{E}_{n\in\mathbf{I}}\prod_{\epsilon\in [\![s-1]\!]}\mathcal{C}^{|\epsilon|} a(n+\epsilon\cdot \underline{h})\Big|^2.
$$
To see this note that
\begin{align*}
	\norm{a}_{U^s(\mathbf{I})}^{2^s}=&\mathbb{E}_{\underline{h}\in \mathbb{N}^s}\Big(\mathbb{E}_{n\in\mathbf{I}}\prod_{\epsilon\in [\![s]\!]}\mathcal{C}^{|\epsilon|} a(n+\epsilon\cdot \underline{h})\Big)
	\\
	=&\mathbb{E}_{\underline{h}\in \mathbb{N}^{s-1}}\mathbb{E}_{h\in \mathbb{N}} \Big(\mathbb{E}_{n\in\mathbf{I}}\prod_{\epsilon\in [\![s-1]\!]}\mathcal{C}^{|\epsilon|}a(n+\epsilon\cdot \underline{h}+h)\cdot
	\prod_{\epsilon\in [\![s-1]\!]}\mathcal{C}^{|\epsilon|}\overline{a(n+\epsilon\cdot \underline{h})}\Big) \\
	=&\mathbb{E}_{\underline{h}\in \mathbb{N}^{s-1}} \Big(\mathbb{E}_{n\in\mathbf{I}}\prod_{\epsilon\in [\![s-1]\!]}\mathcal{C}^{|\epsilon|}a(n+\epsilon\cdot \underline{h}) \cdot
	\mathbb{E}_{n\in\mathbf{I}}\prod_{\epsilon\in [\![s-1]\!]}\mathcal{C}^{|\epsilon|}\overline{a(n+\epsilon\cdot \underline{h})} \Big) \\
	=&\mathbb{E}_{\underline{h}\in \mathbb{N}^{s-1}}\Big|\mathbb{E}_{n\in\mathbf{I}}\prod_{\epsilon\in [\![s-1]\!]}\mathcal{C}^{|\epsilon|} a(n+\epsilon\cdot \underline{h})\Big|^2,
\end{align*}
where the first identity follows from \eqref{E:Us}, the second follows from the remarks made in Section~\ref{SS:uniformity},  and the third from our  ergodicity assumption using identity \eqref{E:ET}.

\medskip

\noindent {\bf Step 2 (Using an  infinitary decomposition).}
Hence, if  $\norm{a}_{U^s(\mathbf{I})}>0$, then
\begin{equation}\label{E:0+}
	\mathbb{E}_{\underline{h}\in \mathbb{N}^{s-1}}\Big(\mathbb{E}_{n\in\mathbf{I}}\prod_{\epsilon\in [\![s-1]\!]}\mathcal{C}^{|\epsilon|} a(n+\epsilon\cdot \underline{h})\cdot A(\underline{h})\Big)>0,
\end{equation}
where
$$
A(\underline{h}):=\mathbb{E}_{n'\in\mathbf{I}}\prod_{\epsilon\in [\![s-1]\!]}\mathcal{C}^{|\epsilon|+1} a(n'+\epsilon\cdot \underline{h}).
$$
As remarked in Section~\ref{SS:uniformity}, we can replace the average $\mathbb{E}_{\underline{h}\in \mathbb{N}^{s-1}}$ with  $\lim_{H\to\infty}\mathbb{E}_{\underline{h}\in [H]^{s-1}}$.
By Proposition~\ref{P:InfiniteDec} we have a decomposition
$$
A(\underline{h})=A_{\text{st}}(\underline{h})+A_{\text{er}}(\underline{h}), \quad \underline{h}\in \mathbb{N}^{s-1},
$$
where
\begin{enumerate}
	\item
	$A_{\text{st}}(\underline{h})$ is  a uniform limit of sequences of the form
	$$
	\underline{h} \mapsto
	\int \, A_{\text{st},x}(\underline{h}) \, d\mu(x), \quad \underline{h}\in \mathbb{N}^{s-1},
	$$
	such that
	$$A_{\text{st},x}(\underline{h}):=
	\mathbb{E}_{n'\in\mathbf{I}}\prod_{\epsilon\in [\![s-1]\!]}\mathcal{C}^{|\epsilon|+1} \phi_x(n'+\epsilon\cdot \underline{h}), \quad \underline{h}\in \mathbb{N}^{s-1},
	$$
	where  for $x\in X$ the sequence  $\phi_x\colon \mathbb{N}\to \mathbb{C}$
	is a basic $(s-1)$-step nilsequence with $\norm{\phi_x}_\infty\leq \norm{a}_\infty\leq 1$,
	and for $n\in\mathbb{N}$ the map $x\mapsto \phi_x(n)$ is $\mu$-measurable; and
	
	\item
	$
	\lim_{M\to\infty}\mathbb{E}_{\underline{h}\in [M]^{s-1}}|A_{\text{er}}(\underline{h})|=0.
	$
\end{enumerate}
Using uniform approximation and the second condition we deduce that
$$
\lim_{M\to \infty} \mathbb{E}_{\underline{h}\in [M]^{s-1}}\Big(\mathbb{E}_{n\in\mathbf{I}}\prod_{\epsilon\in [\![s-1]\!]}\mathcal{C}^{|\epsilon|} a(n+\epsilon\cdot \underline{h})\cdot \int \, A_{\text{st},x}(\underline{h})\, d\mu(x) \Big) >0.
$$
Hence,
$$
\lim_{M\to \infty}\int \, \mathbb{E}_{\underline{h}\in [M]^{s-1}}\Big(\mathbb{E}_{n\in\mathbf{I}}\prod_{\epsilon\in [\![s-1]\!]}\mathcal{C}^{|\epsilon|} a(n+\epsilon\cdot \underline{h})\cdot A_{\text{st},x}(\underline{h})\Big)\, d\mu(x) >0.
$$
Using Fatou's lemma we deduce that  there exists an $x\in X$ such that
$$
\limsup_{M\to \infty} \Big|\mathbb{E}_{\underline{h}\in [M]^{s-1}}\Big(\mathbb{E}_{n\in\mathbf{I}}\prod_{\epsilon\in [\![s-1]\!]}\mathcal{C}^{|\epsilon|} a(n+\epsilon\cdot \underline{h})\cdot A_{\text{st},x}(\underline{h})\Big)\Big| >0.
$$
Using the form of $A_{\text{st},x}$ and the fact that both limits
$\mathbb{E}_{n\in \mathbf{I}}\cdots $ and $\mathbb{E}_{n'\in \mathbf{I}}\cdots $ exist, we get that
$$
\limsup_{M\to \infty}\lim_{N\to \infty}\Big| \mathbb{E}_{n,n'\in I_N} \, a(n)\, \phi_x(n') \, \Big(\mathbb{E}_{\underline{h}\in [M]^{s-1}}\, \prod_{\epsilon\in [\![s-1]\!]^*}\mathcal{C}^{|\epsilon|}
\big(a(n+\epsilon\cdot \underline{h}) \cdot \overline{\phi_x(n'+\epsilon\cdot \underline{h})}\big)\Big)\Big|>0.
$$
Hence, renaming $\phi_x$ as $\phi$, we get that  there exists an $(s-1)$-step nilsequence $\phi$ such that
$$
\limsup_{M\to \infty}\lim_{N\to \infty}\Big| \mathbb{E}_{n,n'\in I_N} \, a(n)\, \phi(n') \, \Big(\mathbb{E}_{\underline{h}\in [M]^{s-1}}\, \prod_{\epsilon\in [\![s-1]\!]^*}\mathcal{C}^{|\epsilon|}
\tilde{a}_{n,n'}(\epsilon\cdot \underline{h})\Big)\Big|>0,
$$
where for $n,n'\in \mathbb{N}$ the sequence $(\tilde{a}_{n,n'}(k))_{k\in\mathbb{N}}$ is defined by
$$
\tilde{a}_{n,n'}(k):=a(n+k)\, \phi(n'+k), \quad k\in \mathbb{N}.
$$

\medskip

\noindent {\bf Step 3 (Using a finitary decomposition).}
Next, we shift the averages over $n$  and $n'$ by $m\in \mathbb{N}$ and average over $m\in [M]$.\footnote{We perform this maneuver in order to  introduce a ``small range parameter'' $m\in [M]$ which will give rise to sequences in $m\in [M]$ for which  finitary decomposition results are applicable.}
We deduce that
$$
\limsup_{M\to \infty}\limsup_{N\to \infty}\mathbb{E}_{n,n'\in I_N} \big| \mathbb{E}_{m\in [M]}  a(m+n)\, \phi(m+n') \, A_{M,n,n'}(m)\big|=\delta>0,
$$
where for $M,n,n'\in\mathbb{N}$ we let
$$
A_{M,n,n'}(m):=\mathbb{E}_{\underline{h}\in [M]^{s-1}}\, \prod_{\epsilon\in [\![s-1]\!]^*}\mathcal{C}^{|\epsilon|}
\tilde{a}_{n,n'}(m+\epsilon\cdot \underline{h}), \quad m\in [M].
$$

For $M,n,n'\in\mathbb{N}$, we use  Proposition~\ref{P:FiniteDec}   for  $\varepsilon:=\delta/3$
in order to decompose the finite sequence $A_{M,n,n'}(m),$ $m\in [M]$.  We get  that there exist $C=C(\delta,s)>0$,  an $(s-2)$-step  nilmanifold $Y=Y(\delta,s)$,
and for large enough  $M\in \mathbb{N}$ there exist
$(s-2)$-step nilsequences $\psi_{M,n,n',\underline{h}}\in C\cdot \Psi_Y$, where  $\underline{h}\in [M]^{s-1}$, $n,n'\in\mathbb{N}$,   such that
$$
\limsup_{M\to\infty}\limsup_{N\to\infty}\mathbb{E}_{n,n'\in I_N} \big| \mathbb{E}_{m\in [M]}  a(m+n)\, \phi(m+n') \, \mathbb{E}_{\underline{h}\in [M]^{s-1}}\, \psi_{M,n,n',\underline{h}}(m)\big|>\delta/2>0.
$$
Hence,
$$
\limsup_{M\to\infty}\limsup_{N\to\infty}\mathbb{E}_{n,n'\in I_N}\mathbb{E}_{\underline{h}\in [M]^{s-1}} \big| \mathbb{E}_{m\in [M]}  a(m+n)\, \phi(m+n') \, \psi_{M,n,n',\underline{h}}(m)\big|>0,
$$
from which we deduce that
$$
\limsup_{M\to\infty}
\limsup_{N\to\infty} \mathbb{E}_{n,n'\in I_N} \sup_{\psi\in \Psi_{Y }}|\mathbb{E}_{m\in [M]}\, a(m+n)\, \phi(m+n') \,\psi(m)|>0.
$$
This implies that (notice that $n\mapsto \psi(n+k)$ is in  $ \Psi_Y$ for every $k\in\mathbb{Z}$)
\begin{equation}\label{E:h}
	\limsup_{M\to\infty} \limsup_{N\to\infty}\mathbb{E}_{n\in I_N} \sup_{\psi\in \Psi_Y, h\in \mathbb{N}}|\mathbb{E}_{m\in [n, n+M]}\, a(m)\, \phi(m+h) \,\psi(m)|>0
\end{equation}
for some $(s-1)$-step nilsequence $\phi$.

\medskip

\noindent {\bf Step 4 (Removing the sup over $h$).}
It remains to show that the supremum over $h\in \mathbb{N}$ can be removed.
As we mentioned in Section~\ref{SS:nilcharacters},
the linear span of vertical nilcharacters is dense in $C(X)$. Hence,   we can assume that
$\phi(n)=\Phi(b^n\cdot e_X), n\in \mathbb{N},$  for some $(s-1)$-step nilmanifold $X$,   $b\in G$,  and vertical nilcharacter $\Phi$ of $X$ with  $\norm{\Phi}_{\lip(X)}\leq 1$. Although we cannot  assume that $|\Phi(x)|=1$ for every $x\in X$,  it is known (see  the proof of  \cite[Proposition~5.6]{TT17}  or  \cite[Lemma~6.4]{GTZ12c}) that there exist $k\in \mathbb{N}$ and vertical nilcharacters $\Phi_1,\ldots, \Phi_k$ of $X$,
which all have the same frequency as $\Phi$,   and satisfy  $\norm{\Phi_j}_{\lip(X)}\leq 1$ for  $j=1,\ldots, k$ and  $|\Phi_1(x)|^2+\cdots+|\Phi_k(x)|^2=1$ for every $x\in X$. For $j=1,\ldots, k$, we let $\phi_j(n):=\Phi_j(b^n\cdot e_X), n\in \mathbb{N}$.
Then $|\phi_1(n)|^2+\cdots +|\phi_k(n)|^2=1$ for every $n\in \mathbb{N}$ and we deduce from this and  \eqref{E:h} that for some $j_0\in \{1,\ldots, k\}$ we have
\begin{equation}\label{E:h'}
	\limsup_{M\to\infty} \limsup_{N\to\infty}\mathbb{E}_{n\in I_N} \sup_{\psi\in \Psi_Y, h\in \mathbb{N}}|\mathbb{E}_{m\in [n, n+M]}\, a(m)\, \phi(m+h) \, |\phi_{j_0}(m)|^2\, \psi(m)|>0.
\end{equation}
It follows from Proposition~\ref{P:GT}  that there exist an $(s-2)$-nilmanifold $\tilde{Y}$ and $\tilde{C}=\tilde{C}(Y)=\tilde{C}(\delta,s)>0$ such that for every $h\in \mathbb{N}$ the sequence $(\phi(n+h)\, \overline{\phi_{j_0}(n)})_{n\in\mathbb{N}}$ is an $(s-2)$-step nilsequence in $\tilde{C}\cdot \Psi_{\tilde{Y}}$.
Thus,  upon
enlarging  the $(s-2)$-nilmanifold $Y$ in     \eqref{E:h'}  the term $\phi(m+h)\, \overline{\phi_{j_0}(m)}$
can be absorbed in the supremum over $\psi\in \Psi_Y$. We deduce that
$$
\limsup_{M\to\infty} \limsup_{N\to\infty}\mathbb{E}_{n\in I_N} \sup_{\psi\in \Psi_Y}|\mathbb{E}_{m\in [n, n+M]}\, a(m)\, \phi_{j_0}(m) \,\psi(m)|>0.
$$
This completes the proof.

\section{$U^s(\mathbf{I})$-uniformity for the Liouville function}
Our goal in this section is to prove Theorem~\ref{T:uniformity}.
Note that this uniformity result combined with Theorem~\ref{T:Tao}
gives Theorem~\ref{T:ErgodicChowla} and combined with Theorem~\ref{T:Tao'} gives Theorem~\ref{T:ErgodicElliott}.
We present the proof of
Theorem~\ref{T:uniformity} for Ces\`aro averages but  a similar argument  also works for logarithmic averages;  wherever needed we  indicate  which statements need to be modified for this purpose.

\subsection{Sketch of the proof} \label{SS:sketch} We proceed by induction as follows:
\begin{itemize}
	\item  For $s=2$ we get that Theorem~\ref{T:uniformity} follows from the ergodicity of $f$ and  Proposition~\ref{P:stronaperiodic} (the strong aperiodicity of $f$ is only used here). Assuming that $s\geq 2$ and  $\norm{f}_{U^s(\mathbf{I})}=0$,  our goal then becomes  to show that $\norm{f}_{U^{s+1}(\mathbf{I})}=0$.
	
	\item
	We first use the inverse result of  Theorem~\ref{T:ergodicInverse} in order  to reduce matters to an orthogonality property of $f$ with
	nilsequences on typical short intervals (see Proposition~\ref{P:Discorrelation'}). Essential use of ergodicity of $f$ is made here.
	
	\item
	The orthogonality property involves a fixed $s$-step nilsequence $\phi$ and a supremum over a set of $(s-1)$ step nilsequences of bounded complexity. If $\phi$ is an $(s-1)$-step nilsequence, then  we are done by the induction hypothesis (the ergodicity of $f$ is used again here)
	via  elementary estimates.
	If not, we reduce matters to the case where the $s$-step  nilsequence  $\phi$ is defined by a non-trivial vertical nilcharacter (see Proposition~\ref{P:Discorrelation''}).

	\item We then 	
	use the orthogonality criterion of Lemma~\ref{L:katai} (the multiplicativity of $f$ is only used here)
	 	  to reduce matters to a purely dynamical statement about ``irrational nilsequences'' (see Proposition~\ref{P:orthogonality}).
	
	\item
	Lastly, we verify the dynamical statement using elementary estimates, qualitative equidistribution results on nilmanifolds, and ideas motivated from \cite{FH15a}.
\end{itemize}

\subsection{Step 1 (Setting up the induction and cases $s=1,2$)}
We prove Theorem~\ref{T:uniformity} by induction on $s\in \mathbb{N}$.
We cover the cases $s=1$ and $s=2$ separately, partly because we want to show their relation to  recently established results, but also  because the inductive step $s\mapsto s+1$ is slightly different when $s\geq 2$.

For $s=1$ we have (recall that $\mathbf{I}=([N_k])_{k\in\mathbb{N}}$)
$$
\norm{f}_{U^1(\mathbf{I})}^2=\lim_{H\to\infty}\mathbb{E}_{h\in [H]}
\big(\mathbb{E}_{n\in \mathbf{I}}\, f(n+h)\, \overline{f(n)})\leq
\limsup_{H\to\infty}\limsup_{N\to\infty}\mathbb{E}_{n\in [N]}|\mathbb{E}_{h\in [H]} f(n+h)|
$$
and the last limit is $0$ by  \cite[Theorem A.1]{MRT15}. Note that this argument did not use our ergodicity assumption on   $f$.
Assuming ergodicity of $f$ for Ces\`aro averages on ${\bf I}$,  then one simply notes that
$\norm{f}_{U^1(\mathbf{I})}=|\mathbb{E}_{n\in \mathbf{I}} f(n)|=0$.

For $s=2$, using our hypothesis that the sequence $(f(n))_{n\in\mathbb{N}}$
is ergodic for Ces\`aro averages on $\mathbf{I}$ we derive exactly as in the first step of the proof of Theorem~\ref{T:ergodicInverse} the identity
$$
\norm{f}_{U^2(\mathbf{I})}^4
=
\lim_{H\to \infty}\mathbb{E}_{h\in [H]} |\mathbb{E}_{n\in {\bf I}} \, f(n+h)\cdot \overline{f(n)} |^2.
$$
This limit is  $0$ by  Proposition~\ref{P:stronaperiodic}.

Suppose now that Theorem~\ref{T:uniformity} holds for $s\geq 2$; in the remaining subsections  we will show that it holds for $s+1$.

\subsection{Step 2 (Using the inverse theorem)} We start by using the inverse theorem proved in the previous section.
It follows from Theorem~\ref{T:ergodicInverse} and Proposition~\ref{E:normequiv}
that in order to prove  Theorem~\ref{T:uniformity}
it suffices to establish the following result:
\begin{proposition}\label{P:Discorrelation'}
	Let $s \geq 2$ and $f\in \mathcal{M}$ be a strongly aperiodic   multiplicative function which is ergodic for Ces\`aro averages on the sequence of intervals
	$\mathbf{I}=([N_k])_{k\in\mathbb{N}}$.
	Then
	for every   $s$-step nilsequence $\phi$  and every $(s-1)$-step nilmanifold $Y$, we have
	$$
	\lim_{M\to \infty}\limsup_{k\to\infty} \mathbb{E}_{n\in [N_k]}\sup_{\psi\in \Psi_Y} |\mathbb{E}_{m\in [n,n+M]}\, f(m)\, \phi(m) \, \psi(m)|=0.
	$$
\end{proposition}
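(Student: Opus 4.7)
The argument proceeds by induction on $s$; the base cases $s=1,2$ are handled in Step~1. Fix $s\geq 2$, assume inductively that $\norm{f}_{U^s(\mathbf{I})}=0$, and let $\phi$ be an $s$-step nilsequence and $Y$ an $(s-1)$-step nilmanifold. My plan is to split $\phi$ according to its vertical frequency and treat the two resulting cases by different mechanisms.

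By the density of the linear span of vertical nilcharacters in $C(X)$ noted in Section~\ref{SS:nilcharacters}, it suffices to consider $\phi(n)=\Phi(b^n\cdot e_X)$ where $\Phi$ is a vertical nilcharacter of an ergodic $s$-step nilmanifold $X=G/\Gamma$ with frequency $\chi\in\widehat{K_s}$. If $\chi$ is trivial, then $\Phi$ is $G_s$-invariant and descends to a continuous function on the $(s-1)$-step quotient of $X$, so $\phi$ is genuinely an $(s-1)$-step nilsequence; the product $\phi\cdot\psi$ then lies in $C\cdot\Psi_{Y'}$ for a single $(s-1)$-step nilmanifold $Y'$ uniformly in $\psi\in\Psi_Y$. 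A standard Gowers--Cauchy--Schwarz bound (combined with the finitary inverse theorem underlying Theorem~\ref{GT10}) gives
$$
|\mathbb{E}_{m\in[n,n+M]}f(m)\,\phi(m)\,\psi(m)|\leq C_Y\,\norm{S_nf}_{U^s[M]}^{c_s}
$$
uniformly in $\psi\in\Psi_Y$ for some positive exponent $c_s$. Averaging over $n\in[N_k]$ and invoking Proposition~\ref{E:normequiv} together with the inductive hypothesis $\norm{f}_{U^s(\mathbf{I})}=0$ disposes of the trivial-frequency case.

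The substantive case is $\chi$ non-trivial. Here I will apply K\'atai's orthogonality criterion (Lemma~\ref{L:katai})---the only place multiplicativity of $f$ is used. Combined with a Cauchy--Schwarz to pull the internal supremum outside the $n$-average (after a discretization of $\Psi_Y$ via compactness of bounded-Lipschitz functions on a fixed nilmanifold), this reduces the required bound to the dynamical statement that, for all distinct primes $p\neq q$,
$$
\limsup_{M\to\infty}\limsup_{k\to\infty}\mathbb{E}_{n\in[N_k]}\sup_{\psi',\psi''\in\Psi_Y}\bigl|\mathbb{E}_{m\in[n,n+M]}\,\phi(pm)\,\overline{\phi(qm)}\,\psi'(m)\,\overline{\psi''(m)}\bigr|=0.
$$
The point is that $(\phi(pm)\,\overline{\phi(qm)})_{m\in\mathbb{N}}$ is an $s$-step nilsequence on $X\times X$ driven by $(b^p,b^q)$, with vertical frequency $\chi\otimes\chi^{-1}$ on the diagonal copy of $G_s$; non-triviality of $\chi$ and $p\neq q$ force this diagonal frequency to be non-trivial as well. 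Leibman's qualitative equidistribution theorem applied to the orbit closure of $(b^p,b^q)$ then drives the sequence to be asymptotically orthogonal to all $(s-1)$-step test sequences of bounded complexity on typical short intervals, which is exactly the content of Proposition~\ref{P:orthogonality}.

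The main obstacle I foresee is precisely the internal supremum over $\psi',\psi''\in\Psi_Y$, which precludes any pointwise appeal to equidistribution on $X\times X$. To handle it uniformly, I plan to invoke Proposition~\ref{P:GT}: after a van der Corput shift, the product $\psi'\overline{\psi''}$ (and a companion shift of $\phi(p\cdot)\overline{\phi(q\cdot)}$ of matching vertical frequency) collapses into a single $(s-2)$-step nilsequence of controlled complexity, thereby trading the degree of nilpotency for an extra short-range average. Iterating this degree-reduction together with the equidistribution of $(b^p,b^q)^n$ on its orbit closure should reduce the uniform estimate to a lower-step statement that fits within the inductive framework, along the lines of the analysis in~\cite{FH15a}. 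This uniform degree-reduction step, rather than the underlying equidistribution, is where I expect the real technical difficulty to lie.
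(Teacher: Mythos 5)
Your overall architecture is consistent with the paper up to a point: split by the frequency of the vertical nilcharacter, handle the trivial case through $\norm{f}_{U^s(\mathbf{I})}=0$ and $\sup$-uniform estimates against $(s-1)$-step nilsequences, and handle the non-trivial case via K\'atai's criterion followed by a dynamical equidistribution statement. You have also correctly located the key reduction target (Proposition~\ref{P:orthogonality}). But there are two substantive problems.

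First, the interface with K\'atai is not worked out and, as written, it does not apply. K\'atai's criterion is a statement about long averages $\mathbb{E}_{n\in[N_k]}\, f(n)\, a_k(n)$ in which the multiplicative function sits in the \emph{outer} averaging variable. In the quantity to be controlled, $f$ lives in the short inner average $\mathbb{E}_{m\in[n,n+M]}\,f(m)\,\phi(m)\,\psi(m)$, and moreover each $n$ carries its own $\psi_{k,n}$ chosen by the supremum. Cauchy--Schwarz and a discretization of $\Psi_Y$ do not reshape this into the form K\'atai requires; they only let you control the size of the family of $\psi$'s. What actually makes K\'atai applicable is the ALR15-style disjointification: restrict $n$ to an arithmetic progression modulo $M_k$ so that the windows $[n,n+M_k)$ partition a long interval, strip the absolute values by inserting unimodular constants, and glue the nilsequence-times-$\phi$ blocks into a single ``piecewise nilsequence'' $g_k$. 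Only then does the object have the shape $\mathbb{E}_{n\in[N_k]}\, f(n)\, g_k(n)$, to which K\'atai applies verbatim. Skipping this maneuver leaves a genuine gap: without it you cannot reduce to the two-prime orthogonality $\mathbb{E}_{n}\, g_k(pn)\,\overline{g_k(p'n)}\to 0$.

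Second, your plan for the final dynamical statement is off the mark. The paper does \emph{not} prove Proposition~\ref{P:orthogonality} by iterating Proposition~\ref{P:GT} and appealing to equidistribution of $(b^p,b^{p'})^n$ ``along the lines of FH15a''; Proposition~\ref{P:GT} is used earlier (in Step 4 of the proof of Theorem~\ref{T:ergodicInverse}) to remove an unwanted $\sup_h$, not here. The mechanism that makes the non-triviality bite is the orbit-closure computation of Proposition~\ref{P:keyinv}: the nilmanifold $W=\overline{\{(b^{pn}\cdot e_X,b^{p'n}\cdot e_X)\}}=H/\Delta$ satisfies $(u^{p^s},u^{p'^s})\in H_s$ for all $u\in G_s$, and since $G_s$ is connected (hence divisible) and $p\neq p'$, this forces $\chi\otimes\overline{\chi}$ to be non-trivial on $H_s$. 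That non-triviality is then converted into $\nnorm{\Phi\otimes\overline{\Phi'}}_s=0$ (Lemma~\ref{L:semi}) and an anti-uniformity estimate against $\Psi_Y$ via van der Corput and the multiple-correlation-sequence approximation of Lemma~\ref{L:nilkey} (Proposition~\ref{C:unfm}). Your sketch gestures at ``non-triviality of $\chi$ and $p\neq q$ forcing the diagonal frequency to be non-trivial'' but without determining the precise subgroup $H_s$ the conclusion does not follow; for instance, the na\"{i}ve diagonal $(u,u)$ is not what lives in $H_s$.

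As for the trivial-frequency case, your route via a Gowers--Cauchy--Schwarz dual-norm bound, uniformly over $\psi\in\Psi_Y$, is a plausible alternative to the paper's Lemma~\ref{L:Direct} (which instead approximates nilsequences by multiple-correlation sequences via Lemma~\ref{L:nilkey} and runs an iterated van der Corput). Both lean on $\norm{f}_{U^s_*(\mathbf{I})}\le 4\norm{f}_{U^s(\mathbf{I})}=0$ at the end, so the conclusion is the same; the paper's choice avoids having to quantify dual/anti-uniformity norms of bounded-complexity nilsequences. This difference is benign, but the two gaps above are not.
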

\begin{remark}
	$\bullet$ A variant for logarithmic averages also holds where one  assumes ergodicity for logarithmic averages on $\mathbf{I}$ and replaces $\mathbb{E}_{n\in [N_k]}$ with $\mathbb{E}^{\log}_{n\in [N_k]}$.  The proof is similar.
	
	$\bullet$ If we remove the $\sup_{\psi\in \Psi_Y}$, then
	our proof works without an ergodicity assumption on $f$.
	This simpler  result was also obtained recently  in \cite{FFK16}, and prior to this, related results were obtained in \cite{FH15a, GT12b}. But none of these results  allows to treat the more complicated setup with the supremum over the set  $\Psi_Y$ and this is crucial for our purposes.
\end{remark}

\subsection{Step 3 (Reduction to non-trivial nilcharacters)}
Since $\phi$ is an $s$-step nilsequence, there exist an $s$-step nilmanifold $X=G/\Gamma$, an ergodic nilrotation  $b\in G$, and a function $\Phi\in C(X)$, such that   $\phi(n)=\Phi(b^n\cdot e_X), n\in \mathbb{N}$. Since the linear span of vertical nilcharacters of $X$
are dense $C(X)$ (see Section~\ref{SS:nilcharacters}) we can assume that $\Phi$
is a vertical nilcharacter of $X$.

If   $\Phi$ is a trivial nilcharacter of $X$, then it factorizes through the nilmanifold
$$
X'=G/(G_s\Gamma)=(G/G_s)/((\Gamma\cap G_s)/G_s).
$$
The group $G/G_s$ is $(s-1)$-step nilpotent and $X'$ is an $(s-1)$-step nilmanifold.   Writing $b'$ for the image of $b$ in $G/G_s$, we have that $\phi(n)=\Phi'(b'^n\cdot e_{X'}), n\in\mathbb{N},$
for some $\Phi'\in C(X')$. We deduce that  $\phi$ is an $(s-1)$-step nilsequence.
Moreover, since the sequence $(f(n))_{n\in\mathbb{N}}$ is  ergodic for Ces\`aro averages on
$\mathbf{I}=([N_k])_{k\in\mathbb{N}}$, the induction hypothesis
gives that
$\norm{f}_{U^{s}(\mathbf{I})}=0$. Hence, the following  theorem  (which does not require ergodicity assumptions) implies that
the conclusion of Proposition~\ref{P:Discorrelation'} holds when the function $\Phi$ defining the nilsequence $\phi$ is
a trivial nilcharacter of $X$:
\begin{lemma}\label{L:Direct}
	Let $s\geq 2$ and $a\in \ell^\infty(\mathbb{N})$ be a sequence that admits correlations  for Ces\`aro averages  on the sequence of intervals
	$\mathbf{I}=(I_N)_{N\in\mathbb{N}}$. Suppose that $\norm{a}_{U^{s}(\mathbf{I})}=0$.
	Then
	for every   $(s-1)$-step nilsequence $\phi$ and every $(s-1)$-step nilmanifold $Y$,  we have
	$$
	\lim_{M\to \infty}\limsup_{N\to\infty} \mathbb{E}_{n\in I_N}\sup_{\psi\in \Psi_Y} |\mathbb{E}_{m\in [n,n+M]}\, a(m) \, \phi(m)\, \psi(m)|=0.
	$$
\end{lemma}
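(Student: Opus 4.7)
The plan is to bound the inner supremum uniformly by a quantity of the form $C\,\|S_na\|_{U^s[M]}$, and then to convert an average of such local $U^s$-norms into a multiple of $\|a\|_{U^s(\mathbf{I})}$ via Proposition~\ref{E:normequiv}; the latter vanishes by hypothesis.

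First, I would observe that for any $\psi\in\Psi_Y$ the product sequence $(\phi(n)\psi(n))_{n\in\mathbb{N}}$ is itself an $(s-1)$-step nilsequence on the product nilmanifold $Z:=X\times Y$, where $X$ is the $(s-1)$-step nilmanifold carrying $\phi$. With the product Riemannian metric, $Z$ is $(s-1)$-step nilpotent, and the Lipschitz norm of the function on $Z$ defining $\phi\cdot\psi$ is controlled by a constant $L=L(\phi,Y)$ that depends only on $\phi$ and $Y$, and not on the particular $\psi\in\Psi_Y$. The same bound persists for every shift $n\mapsto \phi(n+k)\psi(n+k)$, so all such shifted products lie in $L\cdot\Psi_Z$ uniformly in $\psi$ and the shift.

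Second, I would invoke the standard direct estimate for the correlation of $(s-1)$-step nilsequences against the Gowers $U^s$-norm: for any $\xi\in L\cdot\Psi_Z$ with $Z$ an $(s-1)$-step nilmanifold and any $b\colon[M]\to\mathbb{C}$ bounded by $1$ with $M$ sufficiently large,
$$
\bigl|\mathbb{E}_{m\in[M]}\,b(m)\,\xi(m)\bigr|\leq C(Z,L)\,\|b\|_{U^s[M]}.
$$
This is a consequence of the Gowers--Cauchy--Schwarz inequality after a polynomial-phase approximation of $\xi$ on the nilmanifold $Z$ (the point being that $(s-1)$-step nilsequences are annihilated, up to bounded Lipschitz error, by $s$-fold multiplicative differencing); the crucial feature is that $C(Z,L)$ depends only on the complexity class $L\cdot\Psi_Z$. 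Specializing to $b=S_na$ and $\xi(m)=\phi(n+m)\psi(n+m)$, and using Step~1 to bound the latter in $L\cdot\Psi_Z$ uniformly in $n$ and $\psi$, we obtain for each $n\in\mathbb{N}$ and each $M$ large that
$$
\sup_{\psi\in\Psi_Y}\bigl|\mathbb{E}_{m\in[n,n+M]}\,a(m)\,\phi(m)\,\psi(m)\bigr|\leq C\,\|S_na\|_{U^s[M]},
$$
with $C=C(\phi,Y)$ independent of $n$.

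Finally, averaging the last inequality over $n\in I_N$ and taking $\limsup_{N\to\infty}$ followed by $\limsup_{M\to\infty}$ produces $C\,\|a\|_{U^s_*(\mathbf{I})}$ by the definition of $\|\cdot\|_{U^s_*(\mathbf{I})}$; by Proposition~\ref{E:normequiv} this is bounded by $4C\,\|a\|_{U^s(\mathbf{I})}=0$, as desired. The main technical point is the uniformity of the constant in Step~2 across the supremum over $\psi\in\Psi_Y$: without it, one could not exchange supremum and average. The definition of $\Psi_Y$, which fixes both the ambient nilmanifold and the Lipschitz norm, is precisely what delivers this uniformity, since it forces the polynomial-phase approximation underlying the Gowers--Cauchy--Schwarz argument to proceed with a constant depending only on $Y$ and $\phi$.
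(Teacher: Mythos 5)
Your proof is correct in substance, but it takes a genuinely different route from the paper's. Both you and the paper begin the same way: absorb $\phi$ into a uniformly bounded-complexity class together with $\psi$, so the inner supremum runs over a single class $L\cdot\Psi_Z$ with $Z$ an $(s-1)$-step nilmanifold (one small imprecision: as written, $\phi$ need not come from a Lipschitz function, so you should first uniformly approximate $\phi$ by nilsequences defined by Lipschitz functions, as the paper does explicitly). After that you diverge. The paper invokes Lemma~\ref{L:nilkey} to replace each nilsequence in $L\cdot\Psi_Z$ by a multiple-correlation sequence $\int F_0\, T^{k_1 m}F_1\cdots T^{k_{s-1}m}F_{s-1}\,d\mu$ with functions of uniformly bounded norm, and then proves by induction with van der Corput's inequality the estimate \eqref{E:s-1}, landing directly on $4\norm{a}_{U^s(\mathbf{I})}$. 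You instead invoke the converse (generalized von Neumann) estimate for Gowers norms — that bounded-complexity $(s-1)$-step nilsequences are uniformly dual to $U^s[M]$ — to bound the inner supremum by $C\,\norm{S_na}_{U^s[M]}$, and then pass through $\norm{a}_{U^s_*(\mathbf{I})}$ and Proposition~\ref{E:normequiv}. Both endpoints are zero under the hypothesis, so both arguments close. What the student route buys is cleanliness: it avoids Lemma~\ref{L:nilkey} and the van der Corput induction entirely, and makes the role of $U^s_*$ transparent. What it costs is reliance on a somewhat heavier black box: the quantitative uniformity of the constant $C(Z,L)$ over $\Psi_Y$ in your Step~2 is true (it follows from the vertical Fourier decomposition plus Gowers--Cauchy--Schwarz, or is stated explicitly in the ``converse direction'' arguments of Green--Tao--Ziegler), but stating and citing it precisely takes some care — in particular, for general Lipschitz $\Phi$ the correct form includes a small error term that vanishes after truncating the vertical Fourier series, so the honest version is ``for every $\varepsilon>0$ there is $C(\varepsilon,Z,L)$ with $|\mathbb{E}_{m\in[M]}b(m)\xi(m)|\leq C\norm{b}_{U^s[M]}+\varepsilon$ for $M$ large''; that suffices for your limsups, but it should be said. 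The paper's route sidesteps this by only needing the qualitative approximation of Lemma~\ref{L:nilkey} and then re-deriving the needed Cauchy--Schwarz bound in the specific multiple-correlation form \eqref{E:s-1}.
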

\begin{remark}
	A variant for logarithmic averages also holds  where one  replaces
	$\norm{a}_{U^{s}(\mathbf{I})}$ with    $\norm{a}_{U_{\text{log}}^{s}(\mathbf{I})}$
	and
	$\mathbb{E}_{n\in I_N}$ with $\mathbb{E}^{\log}_{n\in I_N}$. The proof is similar.
\end{remark}
\begin{proof}
	First notice that since every $(s-1)$-step nilsequence $\phi$  can be uniformly approximated by $(s-1)$-step nilsequences
	defined by functions with bounded Lipschitz norm,  the sequence $\phi$ can be absorbed in the $\sup$ (upon enlarging the nilmanifold $Y$). Hence, we can assume that $\phi=1$.

	By Lemma~\ref{L:nilkey} it suffices to show that
	$$
	\lim_{M\to \infty}\limsup_{N\to\infty} \mathbb{E}_{n\in I_N} \Big|\mathbb{E}_{m\in [M]} a(m+n)  \int  F_{0,M,n} \cdot T_{M,n}^{k_{1} m}F_{1,M,n}\cdot  \ldots \cdot  T_{M,n}^{k_{s-1} m}F_{s-1,M,n}\, d\mu_{M,n}\Big|=0,
	$$
	where the integers $k_1,\ldots, k_{s-1}$ are arbitrary,
	and for $M,n\in \mathbb{N}$ we have that $(X_{M,n},\mathcal{X}_{M,n}$, $\mu_{M,n},T_{M,n})$ is a system and the functions  $F_{0,M,n},\ldots, F_{s,M,n}\in L^\infty(\mu_{M,n})$
	are bounded by $1$. In fact, we will show by induction on $s\in \mathbb{N}$ a stronger statement, namely,  that  if the functions and
	the sequence are as above and they are all  bounded by $1$, then
	\begin{equation}\label{E:s-1}
		\limsup_{M\to \infty}\limsup_{N\to\infty} \mathbb{E}_{n\in I_N} \norm{\mathbb{E}_{m\in [M]}\, a(m+n) \, \,  T_{M,n}^{k_{1} m}F_{1,M,n}\cdots   T_{M,n}^{k_{s-1} m}F_{s-1,M,n}}_{L^2(\mu_{M,n})}  \leq 4\, \norm{a}_{U^{s}(\mathbf{I})}.
	\end{equation}
	We implicitly assume that the $s=1$ case corresponds to an estimate where there are no functions on the left hand side. So in order to verify the base case, we need to show that
	$$
	\limsup_{M\to \infty}\limsup_{N\to\infty} \mathbb{E}_{n\in I_N} \big|\mathbb{E}_{m\in [M]}\, a(m+n) \big|\leq  4\, \norm{a}_{U^{1}(\mathbf{I})}.
	$$
	To this end, we apply the van der Corput lemma for complex numbers. We get  for all $M,R\in \mathbb{N}$ with  $R\leq M$   and all  $n\in \mathbb{N}$,  that
	$$|\mathbb{E}_{m\in [M]} a(n+m)|^2
	\leq  4\, \mathbb{E}_{r\in [R]}  (1-rR^{-1})\big( \Re (\mathbb{E}_{m\in [M]} a(m+n+r) \cdot \overline{a(m+n)}) +R^{-1}+RM^{-1}\big).
	$$
	Hence,
	\begin{multline*}
		\limsup_{M\to \infty} \limsup_{N\to \infty} (\mathbb{E}_{n\in I_N} |\mathbb{E}_{m\in [M]} a(n+m)|)^2
		\leq \\4 \limsup_{R\to
			\infty}\limsup_{M\to
			\infty}\mathbb{E}_{r\in [R]} (1-rR^{-1}) \mathbb{E}_{m\in [M]}\Re (\mathbb{E}_{n\in \mathbf{I}} \, a(m+n+r)\cdot \overline{a(m+n)})= \\
		4 \limsup_{R\to
			\infty}\mathbb{E}_{r\in [R]} (1-rR^{-1}) \Re (\mathbb{E}_{n\in \mathbf{I}} \, a(n+r)\cdot \overline{a(n)})\leq\\
		4 \limsup_{R\to
			\infty}\mathbb{E}_{r\in [R]}  \Re (\mathbb{E}_{n\in \mathbf{I}} \, a(n+r)\cdot \overline{a(n)})
		=4\norm{a}_{U^1(\mathbf{I})}^2,
	\end{multline*}
	where the last estimate holds because
	$\mathbb{E}_{r\in [R]} (1-rR^{-1}) \Re(\mathbb{E}_{n\in \mathbf{I}} \, a(n+r)\cdot \overline{a(n)})$
	is the Ces\`aro average of
	$\mathbb{E}_{r\in [R]}\Re(\mathbb{E}_{n\in \mathbf{I}} \, a(n+r)\cdot \overline{a(n)})$ with respect to $R$.
	Hence, the asserted estimate holds.

	Suppose now  that the estimate  \eqref{E:s-1} holds for $s-1\in \mathbb{N}$, where $s\geq 2$,  we will show that it holds for $s$.
	We apply the  van der Corput lemma in the Hilbert space $L^2(\mu_{M,n})$ and then use the
	Cauchy-Schwarz inequality.  We get  for all $M, R \in \mathbb{N}$ with  $R\leq M$   and all  $n\in \mathbb{N}$,  that
	\begin{multline*}
		\norm{\mathbb{E}_{m\in [M]}\, a(m+n) \, \,  T_{M,n}^{k_{1} m}F_{1,M,n}\cdots   T_{M,n}^{k_{s-1} m}F_{s-1,M,n}}_{L^2(\mu_{M,n})}^2\leq\\
		4\, \mathbb{E}_{r\in [R]}  \norm{\mathbb{E}_{m\in [M]} a(m+n+r) \cdot \overline{a(m+n)}\,\,   T_{M,n}^{\tilde{k}_{1} m}\tilde{F}_{1,M,n,r}\cdots   T_{M,n}^{\tilde{k}_{s-1} m}\tilde{F}_{s-2, M,n,r}}_{L^2(\mu_{M,n})}
		+R^{-1}+RM^{-1},
	\end{multline*}
	where $\tilde{F}_{j,M,n,r}:=T_{M,n}^{k_jr}F_{j,M,n}\cdot \overline{F_{j,M,n}}$,  $\tilde{k}_j:=k_j-k_{s-1}$,  for $j=1,\ldots, s-2$.

	Hence,
	the square of the left hand side in \eqref{E:s-1} is bounded by
	\begin{multline*}
		4 \, \limsup_{R\to \infty}\mathbb{E}_{r\in [R]} \limsup_{M\to \infty}\limsup_{N\to\infty} \mathbb{E}_{n\in I_N}  \\ \norm{\mathbb{E}_{m\in [M]} a(m+n+r) \cdot \overline{a(m+n)}\, \,  T_{M,n}^{\tilde{k}_{1} m}\tilde{F}_{1,M,n,r}\cdots   T_{M,n}^{\tilde{k}_{s-1} m}\tilde{F}_{s-2,M,n,r}}_{L^2(\mu_{M,n})},
	\end{multline*}
	where $\tilde{F}_{j,M,n,r}\in L^\infty(\mu_{M,n})$ are  bounded by $1$ and $\tilde{k}_j$ are integers for $j=1,\ldots,s-2$.
	
	Applying the induction hypothesis for the sequences $S_ra\cdot \overline{a}$, $r\in\mathbb{N}$ (which also admit correlations
	for Ces\`aro averages  on $\mathbf{I}$ and are bounded by $1$),  the functions $\tilde{F}_{j,M,n,r}$,
	and the integers $\tilde{k}_{j}$, $j=1,\ldots, s-2$,
	and averaging over $r\in \mathbb{N}$, we deduce that the last expression is bounded by $16$ times
	$$
	\limsup_{R\to \infty}\mathbb{E}_{r\in [R]}\norm{S_ra\cdot \overline{a}}_{U^{s-1}(\mathbf{I})}\leq
	\big(\limsup_{R\to \infty}\mathbb{E}_{r\in [R]}\norm{S_ra\cdot \overline{a}}_{U^{s-1}(\mathbf{I})}^{2^{s-1}}\big)^{\frac{1}{2^{s-1}}}= \norm{a}_{U^{s}(\mathbf{I})}^2.
	$$
	Taking square roots, we deduce that \eqref{E:s-1} holds, completing the induction.
\end{proof}

Hence, it  suffices to consider  the case where $\Phi$ is a non-trivial vertical nilcharacter of $X$,
and we have thus reduced matters to proving the following result (note that strong aperiodicity and ergodicity are no longer needed):
\begin{proposition}\label{P:Discorrelation''}
	Let $s\geq 2$ and $f\in \mathcal{M}$ be a multiplicative function. Let
	$X=G/\Gamma$ be an $s$-step nilmanifold, $b\in G$ be
	an ergodic nilrotation, and
	$\Phi\colon X\to \mathbb{C}$ be a non-trivial vertical nilcharacter.
	Then
	for every   $(s-1)$-step nilmanifold $Y$ we have
	$$
	\lim_{M\to \infty}\limsup_{N\to\infty} \mathbb{E}_{n\in [N]}\sup_{\psi\in \Psi_Y} |\mathbb{E}_{m\in [n,n+M]}\, f(m)\,\Phi(b^m\cdot e_X)\, \psi(m)|=0.
	$$
\end{proposition}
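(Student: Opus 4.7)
The plan is to reduce, via the K\'atai-type orthogonality criterion of Lemma~\ref{L:katai}, the desired bound to a purely dynamical statement about the joint action of two independent powers of $b$, and then to verify the latter by equidistribution on nilmanifolds, exploiting the fact that $\Phi$ is a \emph{non-trivial} vertical nilcharacter while $Y$ is of strictly smaller step.

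First I would apply Lemma~\ref{L:katai} to the bounded sequence $m\mapsto \Phi(b^m\cdot e_X)\,\psi(m)$, carrying $\psi\in\Psi_Y$ as a parameter. Multiplicativity of $f$ is exactly what this step exploits, and the reduction leaves us with the task of showing that for all distinct primes $p,q$ outside a negligible set one has
$$\lim_{M\to\infty}\limsup_{N\to\infty}\mathbb{E}_{n\in[N]}\sup_{\psi_1,\psi_2\in\Psi_Y}\Bigl|\mathbb{E}_{m\in[n,n+M]}\Phi(b^{pm}\cdot e_X)\,\overline{\Phi(b^{qm}\cdot e_X)}\,\psi_1(pm)\,\overline{\psi_2(qm)}\Bigr|=0,$$
which is the content I would isolate as Proposition~\ref{P:orthogonality}. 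To accommodate the supremum over $\psi_1,\psi_2\in\Psi_Y$, I would lift the whole problem to the product nilmanifold $Z:=X\times X\times Y\times Y$ and use the Lipschitz bound built into the definition of $\Psi_Y$ to replace the supremum by a maximum over a finite $\varepsilon$-net (or, more cleanly, by a measurable-selection argument) so that the passage from Lemma~\ref{L:katai} to the dynamical statement is uniform.

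Second, for fixed $p,q,\psi_1,\psi_2$ the inner average is a short-interval ergodic average on $Z$ sampled along the nilrotation $(b^p,b^q,c_1^{\,p},c_2^{\,q})$, where $c_1,c_2$ are the nilrotations defining $\psi_1,\psi_2$. By qualitative equidistribution on nilmanifolds (Leibman's theorem and the orbit-closure description recalled in Section~\ref{SS:equidistribution}), such averages converge, as $M\to\infty$ followed by $N\to\infty$, to the integral of $\Phi\otimes\overline{\Phi}\otimes\Psi_1\otimes\overline{\Psi_2}$ against the Haar measure of the orbit closure $Z_{p,q,\psi_1,\psi_2}\subset Z$. The dynamical claim thus reduces to showing that this integral vanishes, uniformly in $\psi_1,\psi_2\in\Psi_Y$, for generic $p\ne q$. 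The vanishing is forced by a mismatch of degrees: if $\chi$ is the non-trivial frequency of $\Phi$ on $K_s=G_s/(G_s\cap\Gamma)$, then $\Phi\otimes\overline{\Phi}$ transforms under $G_s\times G_s$ by $\chi\otimes\overline{\chi}$, and on the ``$(p,q)$-twisted diagonal'' $\{(u^p,u^q):u\in G_s\}$ this equals $\chi^{p-q}$, which is non-trivial whenever $p\ne q$. Meanwhile $Y$ is $(s-1)$-step so $Y_s$ is trivial and the $Y\times Y$ factor cannot support any $s$-th level frequency to compensate. Integrating along the fibres of the corresponding vertical torus then yields $0$ via the basic Fourier-analytic identity on $K_s\times K_s$, which is where the arguments of \cite{FH15a} enter.

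The main obstacle, I expect, is twofold. First, carrying the supremum over $\psi\in\Psi_Y$ through the orbit-closure analysis requires extracting a form of equidistribution that is uniform in the $Y$-parameters (base point plus nilrotation, the latter ranging over a non-compact group); handling this cleanly is what makes the supremum inside the $n$-average meaningful and is arguably the most subtle bookkeeping step. Second, and more seriously, one must show that for generic distinct primes $p\ne q$ and \emph{arbitrary} $\psi_1,\psi_2\in\Psi_Y$, the $s$-th commutator of the Lie algebra of the orbit closure $Z_{p,q,\psi_1,\psi_2}$ really contains the desired twisted image of $G_s$, with no algebraic coincidence in $Z$ allowing the $(s-1)$-step factors to conspire against us. This structural analysis, combining Leibman's description of nilpotent orbit closures with the rigidity afforded by $Y$ being of step $s-1$, is the technical heart of Proposition~\ref{P:orthogonality}.
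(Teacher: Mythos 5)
Your overall skeleton (reduce via K\'atai to a purely dynamical statement about $(b^{pn},b^{p'n})$, then verify it by equidistribution on nilmanifolds using the non-triviality of $\Phi$ and the $(s-1)$-step nature of $Y$) is indeed the same strategy as the paper, but there are two places where your route actually breaks, and they are the two places the paper devotes most of its effort.

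First, K\'atai's orthogonality criterion (Lemma~\ref{L:katai}) controls a single long-range average $\mathbb{E}_{n\in[N]}f(n)a(n)$ for a \emph{fixed} bounded weight $a$. What you need to control is $\mathbb{E}_{n\in[N]}\sup_{\psi}|\mathbb{E}_{m\in[n,n+M]}f(m)\,\phi(m)\,\psi(m)|$: an $L^1(\mathrm{d}n)$ average of short-interval averages, with the supremum \emph{inside} the outer average and an absolute value in the way. You cannot simply ``apply Lemma~\ref{L:katai} carrying $\psi$ as a parameter,'' nor does an $\varepsilon$-net convert this to a full-range average, because the maximizing $\psi$ (and the phase needed to remove the modulus) vary with $n$, and a shift $m\mapsto f(m+n)$ destroys the multiplicativity that K\'atai exploits. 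The paper bridges this gap with a disjointification argument borrowed from the proof of \cite[Theorem~4]{ALR15}: restrict $n$ to a residue class modulo $M_k$ so that the intervals $[n,n+M_k)$ tile $[N_k]$ up to an $o(N_k)$ error, absorb the absolute values into the $\psi_{k,\ell}$ by unimodular constants, and thereby repackage everything as a single long-range correlation $\mathbb{E}_{n\in[N_k]}f(n)g_k(n)$ with $g_k(n)=\sum_\ell{\bf 1}_{[(\ell-1)M_k,\ell M_k)}(n)\phi(n)\psi_{k,\ell}(n)$. Only at that point is K\'atai applicable, and the resulting bilinear condition then yields the dynamical statement for intervals $J'_k$ of length tending to infinity. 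This step is not bookkeeping; without it the reduction does not go through.

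Second, your plan for the dynamical statement lifts the problem to $Z=X\times X\times Y\times Y$ and tries to understand the orbit closure of $(b^p,b^q,c_1^p,c_2^q)$ for \emph{arbitrary} $\psi_1,\psi_2\in\Psi_Y$. You correctly identify this as the hard part: one would have to rule out all algebraic coincidences between the $X$ and $Y$ coordinates, uniformly over a non-compact family of $Y$-data, and the observation that $Y_s$ is trivial does not by itself prevent the $(s-1)$-step factors from conspiring at lower levels. The paper sidesteps this entirely. It keeps $Y$ out of the orbit-closure analysis: Proposition~\ref{P:keyinv} identifies enough of the structure of $W=\overline{\{(b^{pn}\cdot e_X,b^{p'n}\cdot e_X)\}}\subset X\times X$ (namely $(u^{p^s},u^{p'^s})\in H_s$ for all $u\in G_s$) to conclude via Lemma~\ref{L:semi} that $\Phi\otimes\overline{\Phi'}$ restricted to $W$ is a non-trivial vertical nilcharacter, hence has vanishing Host--Kra seminorm $\nnorm{\cdot}_s$. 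The supremum over $\psi\in\Psi_Y$ is then killed in one stroke by the van der Corput/anti-uniformity inequality of Proposition~\ref{C:unfm} (using Lemma~\ref{L:nilkey} to approximate elements of $\Psi_Y$ by $(s-1)$-fold multicorrelation sequences): any $(s-1)$-step nilsequence is dominated by $U^s$-uniformity. So where you would have to do a hard and open-ended equidistribution analysis in $Z$, the paper replaces it with a soft seminorm argument that requires no information about $Y$ beyond its step. I would encourage you to see whether you can make your $Z$-approach precise for $s=2$; once you try you will likely discover both why the disjointification is unavoidable and why the $U^s$-uniformity route is so much more economical.
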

\begin{remarks}
	$\bullet$ Note that in this statement we do not impose any assumption on $f\in \mathcal{M}$.
	
	$\bullet$ If $(a(M,n))_{M,n\in\mathbb{N}}$ is  bounded and
	$\lim_{M\to \infty}\limsup_{N\to\infty} \mathbb{E}_{n\in [N]}\, |a(M,n)|=0$, we then have that
	$\lim_{M\to \infty}\limsup_{N\to\infty} \mathbb{E}^{\log}_{n\in [N]}\, |a(M,n)|=0$.
	It follows that  Proposition~\ref{P:Discorrelation''}    also holds  with  the averages
	$\mathbb{E}^{\log}_{n\in [N]}$ in place of the averages  $\mathbb{E}_{n\in [N]}$.
\end{remarks}

\subsection{Step 4 (Disjointifying the intervals $[n,n+M]$)}
Suppose that  Proposition~\ref{P:Discorrelation''} fails. Then there exist  a   multiplicative function $f\in \mathcal{M}$ and
\begin{itemize}
	\item $\varepsilon>0$;

	\item
	strictly increasing  sequences of integers
	$(M_k)$ and $(N_k)$ satisfying $M_k/N_k\to 0$ as $k\to \infty$;
	
	\item an $s$-step  nilmanifold $X=G/\Gamma$ and an ergodic nilrotation
	$b\in G$;
	
	\item a non-trivial vertical nilcharacter $\Phi$ of $X$;

	\item an $(s-1)$-step nilmanifold $Y$ and  $(s-1)$-step nilsequences $\psi_{k,n}\in \Psi_Y$,   $k,n\in \mathbb{N}$;
\end{itemize}
such that
$$
\mathbb{E}_{n\in [N_k]}|\mathbb{E}_{m\in [n,n+M_k)}\, f(m)\, \phi(m)\, \psi_{k,n}(m)|> \varepsilon
$$
for all large enough  $k\in \mathbb{N}$, where
$$
\phi(m):=\Phi(b^m\cdot e_X), \quad m\in \mathbb{N}.
$$

We follow the argument used in the proof of  \cite[Theorem~4]{ALR15} in order to disjointify the intervals $[n,n+M]$.
Since   $M_k/N_k\to 0$, we have for every bounded sequence $(a(k,n))_{k,n\in\mathbb{N}}$ that
$$
\lim_{k\to\infty} \big(\mathbb{E}_{n\in [N_k]}\, a(k,n)-\mathbb{E}_{r\in [M_k]}\mathbb{E}_{n\in [N_k]\colon n\equiv r \! \! \! \! \! \pmod{M_k}} \, a(k,n)\big)=0.
$$
Applying this for the sequence
$$
a(k,n):=|\mathbb{E}_{m\in [n,n+M_k)}\, f(m)\, \phi(m)\, \psi_{k,n}(m)|, \quad k,n\in\mathbb{N},
$$
we deduce that for every large enough $k\in \mathbb{N}$ there exists $r_k\in [M_k]$ such that
$$
\mathbb{E}_{ n\in [N_k]\colon n\equiv r_k \! \! \! \! \pmod{M_k}}|\mathbb{E}_{m\in [n,n+M_k)}\, f(m)\, \phi(m)\, \psi_{k,n}(m)|> \varepsilon.
$$
Upon changing  the sequences $(\psi_{k,n}(m))_{m\in\mathbb{N}}$ by  multiplicative constants of modulus $1$ that depend on $k$ and $n$ only, we can remove the norm in the previous estimate. Hence, without loss of generality, we can assume
for all large enough  $k\in \mathbb{N}$ that
$$
\mathbb{E}_{ n\in [N_k]\colon n\equiv r_k \! \! \! \! \pmod{M_k}}\big(\mathbb{E}_{m\in [n,n+M_k)}\, f(m)\, \phi(m)\, \psi_{k,n}(m)\big)> \varepsilon.
$$
Since  $M_k/N_k\to 0$, we deduce from the last estimate that
$$
\mathbb{E}_{n \in [N_k]}\, f(n)\,  g_k(n)> \varepsilon
$$
for all large enough  $k\in \mathbb{N}$,
where
\begin{equation}\label{E:gk}
	g_k(n):=\sum_{\ell=1}^\infty \,  {\bf 1}_{[(\ell-1)M_k, \ell  M_k)}(n) \, \phi(n)\, \psi_{k,\ell}(n), \quad k,n\in \mathbb{N},
\end{equation}
for some   $(s-1)$-step nilsequences $\psi_{k,\ell}\in \Psi_Y$, $k, \ell\in \mathbb{N}$.

Hence, in order to get a contradiction and complete the proof of Proposition~\ref{P:Discorrelation''}, it remains to verify that
\begin{equation}\label{E:fgk}
\lim_{k\to\infty} \mathbb{E}_{n \in [N_k]}\, f(n)\,  g_k(n) =0.
\end{equation}
The only property to be used for the intervals $[(\ell-1)M_k, \ell  M_k)$ is that their lengths tend to infinity as $k\to \infty$ uniformly in $\ell\in\mathbb{N}$.

\subsection{Step 5 (Applying an orthogonality criterion)}
We will use the following orthogonality criterion
for multiplicative functions in $\mathcal{M}$:
\begin{lemma}[K\'atai~\cite{K86}, see also  \cite{D74, MV77}]
	For every $\varepsilon>0$
	there exist
	$\delta=\delta(\varepsilon)>0$ and $K:=K(\varepsilon)$ such that the
	following holds: If $N\geq K$  and $a\colon [N]\to \mathbb{C}$ is a finite sequence that is bounded by $1$  and satisfies
	$$
	\max_{\substack{p,q\text{ \rm primes}\\1<p<q<K}}\bigl|\mathbb{E}_{n\in[\lfloor
		N/q\rfloor]} a(pn)\, \overline{a(qn)}\bigr|< \delta,
	$$
	then
	$$
	\sup_{ f \in\mathcal{M}}\bigl|\mathbb{E}_{n\in[N]} f (n)\, a(n)\bigr|<\varepsilon.
	$$
\end{lemma}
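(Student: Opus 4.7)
The plan is to prove the K\'atai orthogonality criterion via the classical approach based on the Tur\'an-Kubilius inequality. Fix $\varepsilon>0$, and let $P=\{p\text{ prime}:p\leq K\}$ with $L=L(K):=\sum_{p\in P}\frac{1}{p}\sim\log\log K$. The goal is to choose $K$ large (depending on $\varepsilon$) and then $\delta>0$ small (depending on $\varepsilon$ and $K$) so that the hypothesis forces $S:=\sum_{n\in[N]}f(n)a(n)$ to satisfy $|S|<\varepsilon N$ uniformly in $f\in\mathcal{M}$.

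First I would relate $S$ to a sum over small primes using Tur\'an-Kubilius: since $\sum_{n\leq N}(\omega_P(n)-L)^2\leq CNL$ for $\omega_P(n):=\#\{p\in P:p\mid n\}$, the Cauchy-Schwarz inequality yields
$$\left|\sum_{n\leq N}f(n)a(n)\,\omega_P(n)-L\cdot S\right|\leq \sqrt{N\cdot CNL}=C'N\sqrt{L}.$$
Rewriting $\sum_{n\leq N}f(n)a(n)\omega_P(n)=\sum_{p\in P}\sum_{m\leq N/p}f(pm)a(pm)=:T$, this gives $L|S|\leq |T|+C'N\sqrt{L}$, so after squaring, $L^2|S|^2\leq 2|T|^2+O(N^2L)$.

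Next I would expand $|T|^2=\sum_{p,q\in P}B_p\overline{B_q}$ where $B_p:=\sum_{m\leq N/p}f(pm)a(pm)$, and split into the diagonal and off-diagonal parts. The diagonal $p=q$ is easily bounded: since $|B_p|\leq N/p$, we have $\sum_p|B_p|^2\leq N^2\sum_p 1/p^2=O(N^2)$. For the off-diagonal terms with $p\neq q$ in $P$, I would use the multiplicativity of $f$ to write $f(pm)=f(p)f(m)$ whenever $(m,p)=1$, at the cost of an $O(N/p^2)$ error per prime, so that
$$B_p\overline{B_q}=f(p)\overline{f(q)}\sum_{\substack{m_1\leq N/p,\,(m_1,p)=1\\ m_2\leq N/q,\,(m_2,q)=1}}f(m_1)\overline{f(m_2)}\,a(pm_1)\overline{a(qm_2)}+O(N^2/(p^2 q)+N^2/(p q^2)).$$
An appropriate application of Cauchy-Schwarz in the $(m_1,m_2)$ variables, reparametrizing via a common index, reduces the estimation of $|B_p\overline{B_q}|$ to bounding averages of the form $\mathbb{E}_{n\leq \lfloor N/q\rfloor}a(pn)\overline{a(qn)}$ (the specific form that the hypothesis controls). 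By hypothesis each such average is $<\delta$, which after summing over ordered pairs $(p,q)\in P\times P$ with $p\neq q$ yields a total off-diagonal contribution to $|T|^2$ of size $O(\delta N^2 L^2)+O(N^2L)$. Altogether,
$$L^2|S|^2\leq 2|T|^2+O(N^2L)\leq O(\delta N^2L^2)+O(N^2L),$$
whence $|S|/N\leq C''(\sqrt{\delta}+1/\sqrt{L})$. The conclusion then follows by first choosing $K$ so large that $C''/\sqrt{L(K)}<\varepsilon/2$, and then $\delta>0$ so small that $C''\sqrt{\delta}<\varepsilon/2$ (with $N$ large enough to absorb the Tur\'an-Kubilius error terms).

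The main obstacle is the off-diagonal estimation: one must pass from a product $B_p\overline{B_q}$ of two \emph{independent} inner sums (over $m_1$ and $m_2$) to a bilinear form where $a(p\,\cdot)$ and $a(q\,\cdot)$ share a \emph{common} index, which is what the hypothesis actually controls. This requires careful application of Cauchy-Schwarz at the right level, plus bookkeeping for the coprimality conditions $(m_i,p)=1$ and the endpoint losses coming from the differing ranges $m_1\leq N/p$ versus $m_2\leq N/q$; the multiplicativity of $f$ enters essentially here through the factorization $f(p)\overline{f(q)}|f(m)|^2$ that emerges when one identifies $m_1$ with $m_2$.
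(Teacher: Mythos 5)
The paper itself does not prove this lemma; it cites K\'atai and Daboussi--Montgomery--Vaughan, so there is no internal proof to compare against, and your proposal must be judged on its own.

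The overall template (Tur\'an--Kubilius, sieve to small primes, bilinear bound) is the right one, and the diagonal and error bookkeeping are fine, but there is a genuine gap at the crucial off-diagonal step, and you have in fact named it yourself without fixing it. After you expand $|T|^2=\sum_{p,q}B_p\overline{B_q}$, each term $B_p\overline{B_q}$ with $p\neq q$ is a product of two \emph{independent} single sums over $m_1$ and $m_2$; there is no application of Cauchy--Schwarz ``in the $(m_1,m_2)$ variables'' for \emph{fixed} $(p,q)$ that converts this product into an expression over a common index $m$. Cauchy--Schwarz turns sums of products into products of sums, never the reverse; applied to $B_p\overline{B_q}$ it only gives $|B_p|\,|B_q|$, which is useless here. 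Indeed, $\bigl|\mathbb{E}_n a(pn)\overline{a(qn)}\bigr|$ can be tiny while $|B_p|\,|B_q|$ is of order $N^2/(pq)$ for suitable $f$, so no bound on the individual term $|B_p\overline{B_q}|$ of the claimed kind can hold. The correct move is to apply Cauchy--Schwarz \emph{before} expanding into the $(p,q)$-double sum: after replacing $f(pm)$ by $f(p)f(m)$ (up to the $O(N/p^2)$ coprimality loss), interchange the order of summation so that $m$ is the outer variable,
$$
T=\sum_{m\leq N}f(m)\sum_{\substack{p\in P,\ p\leq N/m\\ p\nmid m}}f(p)\,a(pm)+O(N),
$$
and only then Cauchy--Schwarz in $m$, giving $|T|^2\leq N\sum_m\bigl|\sum_p f(p)a(pm)\bigr|^2$. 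Expanding the inner square \emph{now} produces $\sum_{p,q}f(p)\overline{f(q)}\sum_m a(pm)\overline{a(qm)}$ with a common index $m$, which the hypothesis controls; the diagonal gives $O(N^2L)$ and the off-diagonal gives $O(\delta N^2\pi(K))$ (not $O(\delta N^2 L^2)$ as you wrote --- the count of prime pairs is essentially $\pi(K)^2$, each weighted by $1/\max(p,q)$, so the correct factor is $\pi(K)$; this only changes how small you must take $\delta$ in terms of $K$, which is harmless since $\delta$ is allowed to depend on $K$). With that rearrangement the rest of your calculation goes through.
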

\begin{remark}
	Using  Vinogradov's  bilinear  method  one can obtain quantitatively superior refinements, see for example \cite[Theorem~2]{BSZ12}.
\end{remark}
We deduce from this lemma  the following:
\begin{lemma} \label{L:katai}
	Let $N_k\to \infty$  be integers and   $a_k\colon [N_k]\to \mathbb{C}$, $k\in\mathbb{N}$,
	be finite sequences that are  bounded by $1$ and satisfy 	
	
	$$
	\lim_{k\to\infty}\mathbb{E}_{n\in [cN_k]} \, a_k(pn)\, \overline{a_k(p'n)}=0
	$$
	for every $p,p'\in \mathbb{N}$ with $p\neq p'$ and every $c>0$.
	Then
	$$
	\lim_{k\to \infty}\sup_{ f \in\mathcal{M}}\big|\mathbb{E}_{n\in [N_k]} \, f(n)\, a_k(n)\big|=0.
	$$
\end{lemma}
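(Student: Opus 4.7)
The plan is to deduce this directly from K\'atai's orthogonality criterion (the preceding lemma) by exploiting the finiteness of the set of prime pairs up to any fixed threshold. The key observation is that K\'atai's criterion converts a statement requiring uniformity over the infinite class $\mathcal{M}$ of multiplicative functions into a check involving only finitely many bilinear correlations of $a_k$ against dilations of itself.

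Concretely, I would argue as follows. Fix $\varepsilon > 0$ and let $\delta = \delta(\varepsilon) > 0$ and $K = K(\varepsilon) \in \mathbb{N}$ be the constants provided by K\'atai's criterion. The set
$$\mathcal{P}_K := \{(p,q) : p, q \text{ primes}, \; 1 < p < q < K\}$$
is finite. For each individual pair $(p,q) \in \mathcal{P}_K$, the hypothesis applied with $c = 1/q$ yields
$$\lim_{k\to\infty} \mathbb{E}_{n \in [\lfloor N_k/q \rfloor]} a_k(pn)\, \overline{a_k(qn)} = 0,$$
where the discrepancy between $[cN_k]$ as appearing in the hypothesis and $[\lfloor N_k/q\rfloor]$ as required by K\'atai's criterion amounts to at most one term and contributes $O(1/N_k)$, which is harmless. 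Because $\mathcal{P}_K$ is a finite set, I can take the maximum over all its pairs and still obtain decay to zero; hence there exists $k_0 = k_0(\varepsilon)$ such that $N_k \geq K$ and
$$\max_{(p,q) \in \mathcal{P}_K} \bigl|\mathbb{E}_{n\in [\lfloor N_k/q \rfloor]} a_k(pn)\, \overline{a_k(qn)}\bigr| < \delta$$
for every $k \geq k_0$. Applying K\'atai's criterion to the sequence $a_k$ on $[N_k]$ then gives
$$\sup_{f \in \mathcal{M}} \bigl|\mathbb{E}_{n \in [N_k]} f(n)\, a_k(n)\bigr| < \varepsilon \quad \text{for all } k \geq k_0,$$
and letting $\varepsilon \to 0$ yields the conclusion.

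There is no real obstacle in this argument; it is essentially a bookkeeping exercise showing that pointwise-in-$(p,q)$ decay upgrades to uniform decay over $\mathcal{P}_K$ because the set is finite. The one minor point worth flagging is the index mismatch between $[cN_k]$ in the hypothesis of the lemma and $[\lfloor N_k/q\rfloor]$ in K\'atai's criterion, which is why the hypothesis is formulated with an arbitrary $c > 0$ rather than just $c = 1$: this flexibility is exactly what lets one invoke K\'atai's criterion with the correct denominator $q$.
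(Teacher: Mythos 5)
Your argument is correct and is precisely the deduction the paper leaves implicit: Lemma~\ref{L:katai} is stated immediately after K\'atai's criterion with only the remark ``We deduce from this lemma the following,'' and your observation that the finite set of prime pairs below $K$ upgrades pointwise decay to uniform decay is exactly the intended filling-in of that step. One minor simplification: with $c=1/q$ the set $[cN_k]$ is exactly $[\lfloor N_k/q\rfloor]$, so there is in fact no index mismatch to apologize for.
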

Applying Lemma~\ref{L:katai},  we get that in order to prove \eqref{E:fgk} and complete the proof of Proposition~\ref{P:Discorrelation''},  it suffices to show that for  every $p,p'\in \mathbb{N}$ with
$p\neq p'$  and every $c>0$,  we have
$$
\lim_{k\to\infty} \mathbb{E}_{n \in [cN_k]}\, g_k(pn)\,  \overline{g_k(p'n)} =0
$$
where $g_k$ is
as in \eqref{E:gk}.
Equivalently, we have to show that (the sum below is finite)
$$
\lim_{k\to\infty} \mathbb{E}_{n \in [cN_k]}\,\Big(\sum_{\ell, \ell'\in \mathbb{N}} \, {\bf 1}_{I_{k, \ell,\ell'}}(n)\,  \phi(pn)\, \psi_{k,\ell}(pn) \, \overline{\phi(p'n)}\, \overline{\psi_{k,\ell'}(p'n)} \Big)=0,
$$
where
$$
I_{k, \ell,\ell'}:=\Big[\frac{(\ell-1)M_k}{p}, \frac{\ell  M_k}{p}\Big)\, \bigcap \, \Big[\frac{(\ell'-1)M_k}{p'}, \frac{\ell'  M_k}{p'}\Big), \quad k, \ell, \ell'\in \mathbb{N}.
$$
Note that for fixed $k\in \mathbb{N}$ the intervals $I_{k, \ell,\ell'}$, $\ell,\ell'\in \mathbb{N},$ are disjoint (and some of them  empty). Since $M_k\to\infty$, they partition the interval
$[cN_k]$ into subintervals $J_{k,l}$,  $l=1,\ldots, L_k$,  with $L_k\to\infty$ and $\min_{l\in [L_k]}|J_{k,l}|\to \infty$ as $k\to \infty$, and a set $Z_k$ with $|Z_k|/N_k\to 0$ as $k\to \infty$.
Since $|Z_k|/N_k\to 0$ as $k\to \infty$, it suffices to show that
$$
\lim_{k\to\infty}  \mathbb{E}_{n \in [cN_k]}\,\Big( \sum_{l\in[L_k]} \, {\bf 1}_{J_{k, l}}(n)\,  \phi(pn) \, \overline{\phi(p'n)}\, \psi_{k,l}(pn) \, \overline{\psi'_{k,l}(p'n)}\Big) =0,
$$
where $\psi_{k,l}=\psi_{k,\ell}$, $\psi'_{k,l}=\psi_{k,\ell'}$
for some $\ell=\ell(l)$, $\ell'=\ell'(l)$. Since
$\min_{l\in [L_k]}|J_{k,l}|\to \infty$ as $k\to \infty$, it suffices to show that if  $J'_k$, $k\in\mathbb{N}$,  are intervals with
$|J'_k|\to\infty$ as $k\to \infty$, then for every $(s-1)$-step nilmanifold $Y$ we have
$$
\lim_{k\to\infty}  \sup_{\psi\in \Psi_Y}|\mathbb{E}_{n \in J'_k}\,  \phi(pn) \, \overline{\phi(p'n)}\, \psi(n)| =0.
$$

Thus, in order to complete the proof of  Proposition~\ref{P:Discorrelation''}
it suffices to verify the following asymptotic orthogonality property which has purely dynamical context:
\begin{proposition}\label{P:orthogonality}
	For $s\geq 2$ let $X=G/\Gamma$ be an $s$-step nilmanifold and  $b\in G$ be an ergodic nilrotation. Furthermore, let  $\Phi, \Phi'$ be non-trivial vertical  nilcharacters of $X$  with the same frequency.
	Then for every $p,p'\in \mathbb{N}$ with $p\neq p'$, every sequence of intervals  $(I_N)_{N\in\mathbb{N}}$ with $|I_N|\to \infty$, and every $(s-1)$-step nilmanifold $Y$, we have
	\begin{equation}\label{E:keypp'}
		\lim_{N\to \infty} \sup_{ \psi\in \Psi_Y}|\mathbb{E}_{n\in I_N}\,  \Phi(b^{pn}\cdot e_X) \,
		\overline{\Phi'(b^{p'n}\cdot e_X)}\, \psi(n)|=0.
	\end{equation}
\end{proposition}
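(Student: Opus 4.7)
My strategy is to reformulate the supremum as an equidistribution problem on a product nilmanifold and then exploit the mismatch between the $s$-step vertical structure of $X\times X$ and the $(s-1)$-step structure of $Y$ to force the limiting integral to vanish. Writing $Y=H_Y/\Delta_Y$, every $\psi\in\Psi_Y$ has the form $\psi(n)=F(c^n\cdot y_0)$ with $c\in H_Y$, $y_0\in Y$, and $F\in\lip(Y)$ of norm at most $1$. Treating $(c,y_0,F)$ as fixed for the moment, consider the at-most-$s$-step product nilmanifold $Z:=X\times X\times Y$ equipped with the nilrotation $\tau:=(b^p,b^{p'},c)\in G\times G\times H_Y$.

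By Leibman's qualitative equidistribution theorem (Section~\ref{SS:equidistribution}), the orbit $(\tau^n\cdot(e_X,e_X,y_0))_{n\in\mathbb{N}}$ is equidistributed in its closure $Z_\tau=\tilde H\cdot(e_X,e_X,y_0)$, where $\tilde H\le G\times G\times H_Y$ is a closed subgroup containing $\tau$. Hence, for fixed $(c,y_0,F)$,
\[
\lim_{N\to\infty}\mathbb{E}_{n\in I_N}\Phi(b^{pn}\cdot e_X)\,\overline{\Phi'(b^{p'n}\cdot e_X)}\,F(c^n\cdot y_0)=\int_{Z_\tau}\Phi\otimes\overline{\Phi'}\otimes F\,dm_{Z_\tau}.
\]
Because $(H_Y)_s=\{e\}$, the $s$-th commutator of the ambient group is $G_s\times G_s\times\{e\}$, and the integrand transforms under an element $(u_1,u_2,e)$ of this subgroup by the character $(u_1,u_2)\mapsto\chi(u_1)\overline{\chi(u_2)}$, where $\chi$ is the common frequency of $\Phi$ and $\Phi'$ (the $F$-factor is fixed by this action). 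Since the Haar measure on $Z_\tau$ is $\tilde H_s$-invariant, the integral vanishes whenever this character is non-trivial on $\tilde H_s$.

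The core technical step is therefore to show that $\tilde H_s$ always contains an element on which $\chi\otimes\overline{\chi}$ is non-trivial. Because $b$ acts ergodically on $X=G/\Gamma$, the projections of $\tilde H$ to each $G$-factor are dense modulo $\Gamma$; combined with the fact that $G_s$ is connected for $s\ge 2$, this forces $\tilde H^0$ to project onto $G^0$ in each of the two $G$-coordinates. Iterating the commutator identity $[b^p,v]\equiv[b,v]^p\pmod{G_{k+2}}$ for $v\in G_k$ (which becomes exact in an $s$-step nilpotent group since $G_{s+1}=\{e\}$), the $s$-fold nested commutator $[\tau,[\tau,\ldots,[\tau,h]]]$, with $s-1$ copies of $\tau$ and one $h\in\tilde H^0$ projecting to $(g,g',\cdot)\in G^0\times G^0\times H_Y^0$, gives an element of $\tilde H_s$ whose first two coordinates are $(\mathrm{ad}_b^{s-1}(g)^{p^{s-1}},\mathrm{ad}_b^{s-1}(g')^{p'^{s-1}})$. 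By exploiting that $\tilde H^0$ contains a "diagonal-like" section over $G^0$ (up to corrections that lie in higher commutator groups and are therefore absorbed), one may choose $h$ with $g=g'$ to obtain $(u^{p^{s-1}},u^{p'^{s-1}})\in\tilde H_s$ for $u=\mathrm{ad}_b^{s-1}(g)$. The character value is $\chi(u)^{p^{s-1}-p'^{s-1}}$; since $\chi$ is a non-trivial character of the torus $G_s/(G_s\cap\Gamma)$ while $p^{s-1}\neq p'^{s-1}$ (as $p\neq p'$ are positive integers and $s\ge 2$), a density argument of the type used in \cite{FH15a} shows that $\{\mathrm{ad}_b^{s-1}(g):g\in G^0\}$ generates a subgroup dense in $G_s$, so some such $u$ witnesses non-triviality and the integral vanishes.

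Finally, upgrading pointwise vanishing to uniform vanishing over $\psi\in\Psi_Y$ proceeds as follows. The Lipschitz ball in $C(Y)$ is compact by Arzel\`a--Ascoli, $y_0$ ranges over the compact nilmanifold $Y$, and the potentially non-compact parameter $c\in H_Y$ can be reduced modulo the stabilizer of the orbit structure, placing the effective data $(c,y_0,F)$ in a compact family; a standard equicontinuity plus equidistribution argument then converts the pointwise limit to uniform convergence. The principal obstacle I anticipate is the commutator analysis in the third paragraph: the precise lifting that produces a "diagonal" base element $h\in\tilde H^0$, the correct Hall--Petresco-style accounting of the $p^{s-1}$ and $p'^{s-1}$ exponents, and the density of $\{\mathrm{ad}_b^{s-1}(g):g\in G^0\}$ in $G_s$ each require care, and the possible non-connectedness of $X$ may force a preliminary reduction to the connected component $X^0$ via the mechanism described in Section~\ref{SS:nilcharacters}.
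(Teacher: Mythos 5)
Your high-level architecture overlaps substantially with the paper's: both rely on analyzing the closure $W$ of the orbit of $(b^{pn}\cdot e_X,\ b^{p'n}\cdot e_X)$ inside $X\times X$, and both hinge on showing that the product nilcharacter $\Phi\otimes\overline{\Phi'}$ is non-trivial on that sub-nilmanifold. However, there are three concrete problems with the way you carry this out.

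\textbf{1. The supremum over $\psi\in\Psi_Y$ is not actually handled.} You bundle the $\psi$-factor into a product nilmanifold $Z=X\times X\times Y$, apply Leibman's qualitative equidistribution theorem for a \emph{fixed} $(c,y_0,F)$, and then propose to upgrade this to uniformity by compactness. The difficulty is that $c$ ranges over the non-compact group $H_Y$, and qualitative equidistribution gives no control on the rate of convergence as $c$ varies — different choices of $c$ give different sub-nilmanifolds $Z_\tau$, different limiting measures, and arbitrarily slow convergence. The sentence ``can be reduced modulo the stabilizer of the orbit structure, placing the effective data $(c,y_0,F)$ in a compact family'' is not a reduction you can actually carry out with qualitative tools. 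The paper sidesteps this entirely: it first proves (Lemma~\ref{L:semi}) that the sequence $\Phi\otimes\overline{\Phi'}(h^n\cdot e_W)$ has vanishing $U^s(\mathbf{I})$-seminorm, and then uses Lemma~\ref{L:nilkey} to approximate any $\psi\in\Psi_Y$ uniformly by multiple-correlation sequences, after which a van der Corput induction (Proposition~\ref{C:unfm}) bounds the whole supremum by a constant times the $U^s(\mathbf{I})$-seminorm. This decoupling is precisely what lets the argument work with only qualitative equidistribution, as the author emphasizes in the sketch.

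\textbf{2. The commutator calculation is carried out on the wrong section of $\tilde H$.} You claim that $\tilde H^0$ contains ``a diagonal-like section over $G^0$'' and choose $h$ with $g=g'$, arriving at the element $(u^{p^{s-1}},u^{p'^{s-1}})\in\tilde H_s$. But $\tilde H$ is generated by $(b^p,b^{p'},c)$ together with the lattice, and its projection to $(G/\Gamma)\times(G/\Gamma)$ modulo $G_2\times G_2$ is the \emph{twisted} graph $\{(z^p,z^{p'})\}$, not the diagonal. A typical lift $h\in\tilde H$ projects to something of the form $(g^p,g^{p'},\cdot)$ modulo higher commutators, and feeding this into $\mathrm{ad}_\tau^{\,s-1}$ gives exponents $p^s$ and $p'^s$ (one extra power from the base element), exactly the exponents that appear in the paper's Proposition~\ref{P:keyinv}. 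Your version with exponents $p^{s-1}$, $p'^{s-1}$ is based on a false assumption about the structure of $\tilde H$. This happens not to be fatal — since $p\neq p'$ and $s\geq2$, both $p^s-p'^s$ and $p^{s-1}-p'^{s-1}$ are nonzero, and connectedness of $G_s$ then gives non-triviality of $\chi$ raised to either exponent — but it signals that the claimed structure of $\tilde H$ has not been established.

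\textbf{3. The density step is asserted, not proved.} You invoke ``a density argument of the type used in \cite{FH15a}'' to justify that $\{\mathrm{ad}_b^{s-1}(g):g\in G^0\}$ generates a subgroup dense in $G_s$. This is exactly where the real work is. The paper does not take this shortcut: Proposition~\ref{P:keyinv} proves by a careful induction on commutator depth that \emph{every} $u\in G_s$ satisfies $(u^{p^s},u^{p'^s})\in H_s$, using at each level $j$ that $H$ (together with $\Gamma\times\Gamma$) contains all pairs $(g^{p^j},g^{p'^j})$ modulo $G_{j+1}\times G_{j+1}$, and in particular using the base case from ergodicity of $b$ on the maximal torus factor. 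Simply iterating $\mathrm{ad}_\tau$ on one element does not establish the full containment needed, and the ``Hall--Petresco-style accounting'' and the density of the $\mathrm{ad}$-orbit are precisely the details that need proof, not assertion.

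In short: the structural insight (non-triviality of $\Phi\otimes\overline{\Phi'}$ on $W$ by examining $H_s$) is the right one and matches the paper, but the two surrounding parts of your argument — the reduction for $\sup_\psi$ and the commutator/density computation — are both incomplete, and the first is incorrect as stated because it requires uniformity in $c\in H_Y$ that qualitative equidistribution does not provide.
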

A model case is when  $ \Phi(b^n\cdot e_X) = \Phi'(b^n\cdot e_X)= \e(n^s\beta)$ with $\beta$ irrational.  Then the  statement to be proved reduces to
$$
\lim_{N\to \infty} \sup_{ \psi\in \Psi_Y}|\mathbb{E}_{n\in I_N}\, \e(n^s\alpha)\, \psi(n)|=0
$$
where $\alpha:=(p^s-q^s)\beta$ is irrational.
This can be verified easily by using van der Corput's lemma for complex numbers and Lemma~\ref{L:nilkey}.  The proof in the general case  is  much harder
though; it  is given in the next subsection.

\subsection{Step 6 (Proof of the dynamical property)}
The goal of this last subsection is to prove Proposition~\ref{P:orthogonality}. Let us remark first that  although we were not able to adapt a  related  argument in \cite[Theorem~6.1]{FH15a} to the current setup, we found some of the ideas used there very useful.

The main idea is as follows. We apply the van der Corput lemma for complex numbers $(s-1)$ times in order to
cancel out the term $\psi$ (Lemma~\ref{L:nilkey} is useful in this regard) and
we reduce \eqref{E:keypp'}
to verifying  $U^s(\mathbf{I})$-uniformity for the sequence
$\big(\Phi(b^{pn}\cdot e_X) \,
\overline{\Phi'(b^{p'n}\cdot e_X)}\big)_{n\in\mathbb{N}}$.  The fact that the supremum over $\Psi_Y$ no longer appears
has the additional advantage that we  only need to use qualitative (and not quantitative) equidistribution results on nilmanifolds.

The key in obtaining the necessary  $U^s(\mathbf{I})$-uniformity
is to establish  that the nilcharacter  $\Phi\otimes \Phi'$ is non-trivial  on the $s$-step nilmanifold $W:=\overline{\{(b^{pn}\cdot e_X, b^{p'n}\cdot e_X), n\in\mathbb{N}\}}$. Although the precise  structure of
the  nilmanifold $W$ is  very difficult to determine (and depends on the choice of the ergodic nilrotation $b$) it is possible to extract  partial information on $W$ that suffices for our purposes.  This last idea is taken from the proof of \cite[Proposition~6.1]{FH15a} and the precise statement is as follows:

\begin{proposition}\label{P:keyinv}
	For $s\in \mathbb{N}$ let $X=G/\Gamma$ be a connected $s$-step nilmanifold and $b\in G$ be an ergodic nilrotation. Let  $p,p'\in\mathbb{N}$ be distinct and let $W$ be the closure of the sequence  $(b^{pn}\cdot e_X, b^{p'n}\cdot e_X)_{n\in \mathbb{N}}$ in $X\times X$. Then $W$ is a nilmanifold that can be represented as $W=H/\Delta$ where
	$\Delta=\Gamma\times\Gamma$ and  $H$ is a subgroup of $G\times G$  such that
	$(b^p,b^{p'})\in H$ and
	\begin{equation}\label{E:invariance}
		(u^{p^s}, u^{p'^s})\in H_s \qquad \text{for every }  u\in G_s.
	\end{equation}
\end{proposition}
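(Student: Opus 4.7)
Plan:

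The proof splits naturally into (i) showing $W$ is a sub-nilmanifold of $X\times X$ with the claimed representation and (ii) verifying the invariance relation \eqref{E:invariance}. For (i), I would apply the orbit-closure theorem for nilrotations cited in Section~\ref{SS:equidistribution} to the nilrotation $(b^p,b^{p'})$ acting on $X\times X=(G\times G)/(\Gamma\times\Gamma)$. This yields $W=\tilde H/(\tilde H\cap(\Gamma\times\Gamma))$ for some closed subgroup $\tilde H\subset G\times G$ containing $(b^p,b^{p'})$. Setting $H:=\tilde H\cdot(\Gamma\times\Gamma)$ then gives a closed subgroup (using that $\Gamma\times\Gamma$ normalizes the identity component $\tilde H^0$ by the standard structure theory of orbit closures in nilmanifolds) that contains both $(b^p,b^{p'})$ and $\Gamma\times\Gamma$, and satisfies $W=H/(\Gamma\times\Gamma)$.

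For (ii), write $b=\exp(B)$ with $B\in\mathfrak{g}$, and use Mal'cev coordinates on $G$ compatible with the lower central series filtration. The coordinates of $b^n$ lying in the stratum $\mathfrak{g}_j$ are polynomials in $n$ of degree at most $j$; in particular the $\mathfrak{g}_s$-coordinates are genuine degree-$s$ polynomials whose leading coefficients are specific iterated Lie brackets of $B$. Consequently the leading $\mathfrak{g}_s$-coefficient of $b^{pn}$ is $p^s$ times that of $b^n$, and analogously with $p'^s$ for $b^{p'n}$. By Leibman's equidistribution theorem the orbit $((b^p,b^{p'})^n\cdot e_{X\times X})_n$ equidistributes in $W$, and projecting to the central torus $(G_s/(\Gamma\cap G_s))^2$, the classical equidistribution of the fractional parts of $n^s\alpha$ for irrational $\alpha$ yields that $W$ contains a one-parameter family $\{(u^{p^s},u^{p'^s})\cdot e_{X\times X}\}$ as $u$ ranges over a one-parameter subgroup $L\subset G_s$ determined by the leading Mal'cev coefficient. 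Conjugating $(b^p,b^{p'})$ by $(\gamma,\gamma)\in\Gamma\times\Gamma\subset H$ produces further such families with $L$ twisted by $\operatorname{Ad}(\gamma)$; ergodicity of $b$ on the Kronecker factor $G/(G_2\Gamma)$ ensures that the collection of these twisted directions is dense in $G_s$, and closedness of $H$ yields $(u^{p^s},u^{p'^s})\in H$ for every $u\in G_s$.

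To upgrade ``$\in H$'' to ``$\in H_s$'', each such element is realized as a limit of $s$-fold iterated commutators inside $H$ built from $(b^p,b^{p'})$ and the shifted elements $(b^p\gamma,b^{p'}\gamma)$ for $\gamma\in\Gamma$. The multilinearity of $s$-fold commutators in $G_s$ (valid because $G_{s+1}=1$), combined with BCH corrections in the shifted elements, assembles the exponents $p^s$ and $p'^s$ exactly. The main obstacle is this final exponent bookkeeping: a naive $s$-fold commutator of $(b^p,b^{p'})$ against elements of $\Gamma\times\Gamma$ produces only the exponent $p^{s-1}$, and the extra factor of $p$ must come from subleading BCH contributions in the shifted elements $(b^p\gamma,b^{p'}\gamma)$, which requires careful multilinear tracking in the stratified Lie algebra $\bigoplus_j\mathfrak{g}_j$.
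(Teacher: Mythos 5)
There is a genuine gap, and you in fact put your finger on it yourself: the ``naive $s$-fold commutator of $(b^p,b^{p'})$ against elements of $\Gamma\times\Gamma$ produces only the exponent $p^{s-1}$,'' and your promised fix (recovering the extra factor of $p$ from ``subleading BCH contributions in the shifted elements'') is left unexecuted and does not obviously work. The paper dissolves this difficulty with a different and sharper inductive claim, which is the key idea you are missing: for every $j\in\{1,\dots,s\}$ and every $g\in G_j$, one has $(g^{p^j},g^{p'^j})\in H_j\cdot(G_{j+1}\times G_{j+1})$. The base case $j=1$ comes from projecting $H$ to the Kronecker torus $Z=G/(G_2\Gamma)$: ergodicity of every power of $b$ on $Z$ forces the image of $H$ to equal $\{(z^p,z^{p'}):z\in Z\}$, and because $\Gamma\times\Gamma\subset H$ this lifts to $(g^p,g^{p'})\in H\cdot(G_2\times G_2)$ for \emph{all} $g\in G$, not merely $g=b$ or $g\in\Gamma$. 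That is exactly the extra supply of elements your argument lacks when you conjugate only by $(\gamma,\gamma)\in\Gamma\times\Gamma$. The inductive step then takes $g=[h,v]$ with $h\in G$, $v\in G_{j-1}$, picks $(h^pu,h^{p'}u')\in H$ (base case) and $(v^{p^{j-1}}w,v^{p'^{j-1}}w')\in H_{j-1}$ (inductive hypothesis), and forms their commutator inside $H_j$; modulo $G_{j+1}\times G_{j+1}$ this equals $([h,v]^{p^j},[h,v]^{p'^j})$. The exponents multiply up as $p\cdot p^{j-1}=p^j$ precisely because the \emph{second} factor already carries $p^{j-1}$ from the induction, rather than being a fixed element of $\Gamma$ with no $p$-dependence; the accumulated error terms $u,u',w,w'$ are harmless since they land in $G_{j+1}$ after commutation. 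Setting $j=s$ and using $G_{s+1}=\{e\}$ gives the statement. Your route through Mal'cev coordinates, leading coefficients of $b^n$, equidistribution of $n^s\alpha$, and density of $\operatorname{Ad}(\gamma)$-twisted directions would at best land elements in $H$ rather than in $H_s$, and upgrading membership to $H_s$ is exactly the unresolved part.

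For part (i), your plan is close in spirit to the paper's but your definition $H:=\tilde H\cdot(\Gamma\times\Gamma)$ requires justification that this product is a group; the assertion that $\Gamma\times\Gamma$ normalizes $\tilde H^0$ is not something you can invoke as ``standard structure theory'' without more care. The paper sidesteps this by defining $H$ intrinsically as $\overline{\langle\Gamma\times\Gamma,(b^p,b^{p'})\rangle}$, then sandwiching the orbit between $H\cdot(e_X,e_X)$ and the orbit closure subgroup to conclude $W=H/(\Gamma\times\Gamma)$.
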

\begin{proof}
	As remarked in Section~\ref{SS:equidistribution},  $W$ is a sub-nilmanifold of $X\times X$.
	Let
	$$
	H:=\overline{\langle  \Gamma\times\Gamma, (b^{p},b^{p'})\rangle},
	$$
	that is, $H$ is the smallest closed subgroup of $G\times G$ that contains
	$\Gamma\times\Gamma$ and  $(b^{p},b^{p'})$.
	\begin{claim*}
		We have that	$W=H/(\Gamma\times\Gamma).$
	\end{claim*}
	Indeed, by the definition of $H$ and $W$, we have   $H\cdot(e_X,e_X)\supset W$.
	Furthermore, as remarked in Section~\ref{SS:equidistribution},  we have
	$W=H_1\cdot(e_X,e_X)$ for some closed subgroup $H_1$ of $G\times G$ containing the  element   $(b^{p},b^{p'})$.  Since $W$ is compact,
	the set  $H_2:=H_1\cdot(\Gamma\times\Gamma)$ is  closed in $G$ (see remarks in Section~\ref{SS:nilmanifolds}). Since $H_2$ is a closed subgroup that contains  $(b^{p},b^{p'})$  and $\Gamma\times\Gamma$, we have  $H_2\supset H$, hence
	$H\cdot(e_X,e_X)\subset H_2\cdot(e_X,e_X)=H_1\cdot(e_X,e_X)=W$. Therefore, $H\cdot(e_X,e_X)= W$,
	which implies that  $W=H/(\Gamma\times\Gamma)$. This proves the claim.

	It remains to show \eqref{E:invariance}.
	To this end, it suffices to verify the following:
	\begin{claim*}
		\label{cl:Lj}
		Let  $j\in \{1,\ldots, s\}$ and  $g\in G_j$. Then
		$(g^{p^j}, g^{p'^j})\in  H_j\cdot (G_{j+1}\times G_{j+1})$.
	\end{claim*}
	We proceed by induction on $j$. We first prove the claim for $j=1$.
	Let $Z:=G/(G_2\Gamma)$ and $\pi\colon G\to Z$ be the natural projection. Since $X$ is connected, the nilmanifold $Z$ is connected. It is also a compact Abelian group, hence $Z$ is a torus.  The ergodic nilrotation  $b$  projects to the element $\beta:=\pi(b)$ and every power of $\beta$  acts ergodically on the torus $Z$. Since $(\pi\times \pi)(H)$  is the closure of the sequence $(\beta^{pn},\beta^{p'n})$ in $Z\times Z$, it follows that  $(\pi\times \pi)(H)=\{(z^p,z^{p'}) , z\in Z\}$.
	We deduce from this (recall also that  $\Gamma\times\Gamma\subset H$) that
	$$
	\{(g^p,g^{p'}),\,  g\in G\}\subset H\cdot (G_2\times G_2),
	$$
	which proves the claim for $j=1$.

	Let now $j\geq 2$ and suppose that  the claim holds for $j-1$, we will show that it holds for $j$. To this end, notice first that if  $g,g'\in G_j$ and $u_1,u'_1,u_2,u'_2\in G_{j+1}$, then
	$$
	g^{p^j}u_1\cdot g'^{p^j}u'_1=(gg')^{p^j}\bmod G_{j+1}, \quad
	g^{p'^j}u_2\cdot g'^{p'^j}u'_2=(gg')^{p'^j}\bmod G_{j+1}.
	$$
	It follows that the family of elements $g$ of $G_j$ for which the conclusion of the claim  holds is a subgroup of $G_j$. Therefore, it suffices to prove that the conclusion of the claim holds when
	$g$ is a commutator, that is, $g=[h,v]$ for some $h\in G$ and some $v\in G_{j-1}$.
	By the $j=1$ case of the claim   there exist $u,u'\in G_2$ such that $(h^{p}u,h^{p'}u')\in H$ and by the induction hypothesis there exist $w,w'\in G_j$ such that
	$(v^{p^{j-1}}w,v^{p'^{j-1}}w')\in H_{j-1}$. Then the  commutator
	$\bigl([h^{p}u,v^{p^{j-1}}w]\,,\, [h^{p'}u',v^{p'^{j-1}}w']\bigr)$
	of these elements belongs to $H_j$. Since $$
	[h^{p}u,v^{p^{j-1}}w]=[h,v]^{p^j}=
	g^{p^j}\bmod G_{j+1}, \quad
	[h^{p'}u',v^{p'^{j-1}}w']=[h,v]^{p'^j}=
	g^{p'^j}\bmod G_{j+1},
	$$
	we deduce that
	$$
	(g^{p^j}, g^{p'^j})\in  H_j\cdot (G_{j+1}\times G_{j+1}).
	$$
	This completes the proof of   the claim.
	
	Setting $j=s$ and using that $G_{s+1}$ is trivial we deduce \eqref{E:invariance}, completing the proof.
\end{proof}

\begin{lemma}\label{L:semi}
	For $s\geq 2$ let $W=H/\Delta$ be an $s$-step nilmanifold and $h\in H$ be an ergodic nilrotation. Let
	$\Phi$ be a non-trivial vertical nilcharacter of $W$  and
	$$
	\phi(n):=\Phi(h^n\cdot e_W), \quad n\in\mathbb{N}.
	$$
	Then $\norm{\phi}_{U^s(\mathbf{I})}=0$ for every   sequence of intervals $\mathbf{I}=(I_N)_{N\in\mathbb{N}}$
	with $|I_N|\to \infty$.
\end{lemma}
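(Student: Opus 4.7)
The plan is to identify $\norm{\phi}_{U^s(\mathbf{I})}$ with the Host--Kra seminorm of $\Phi$ on the ergodic nilsystem $(W,m_W,T_h)$ and then to show that this seminorm vanishes because non-trivial vertical nilcharacters are orthogonal to the maximal $(s-1)$-step nilfactor.

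First I would use the unique ergodicity of $T_h$ on $W$ recalled in Section~\ref{SS:equidistribution}: for every $F\in C(W)$ and every sequence of intervals with $|I_N|\to\infty$,
$$\lim_{N\to\infty}\mathbb{E}_{n\in I_N}F(h^n\cdot e_W)=\int_W F\,dm_W.$$
Applied for each $\underline{h}\in\mathbb{N}^s$ to the continuous function $x\mapsto\prod_{\epsilon\in[\![s]\!]}\mathcal{C}^{|\epsilon|}\Phi(h^{\epsilon\cdot\underline{h}}\cdot x)$, this shows that $\phi$ admits correlations on $\mathbf{I}$, and combining with the explicit formula \eqref{E:Us} yields
$$\norm{\phi}_{U^s(\mathbf{I})}^{2^s}=\mathbb{E}_{\underline{h}\in\mathbb{N}^s}\int_W\prod_{\epsilon\in[\![s]\!]}\mathcal{C}^{|\epsilon|}\Phi(h^{\epsilon\cdot\underline{h}}\cdot x)\,dm_W(x)=\nnorm{\Phi}_{s,T_h}^{2^s},$$
the right-hand side being the Host--Kra seminorm of $\Phi$ in the nilsystem $(W,m_W,T_h)$.

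Next, I would invoke the classical fact (a direct consequence of Theorem~\ref{T:HK} together with the minimality of the $(s-1)$-step nilfactor) that for an ergodic $s$-step nilsystem $(H/\Delta,m_W,T_h)$ the Host--Kra factor $\mathcal{Z}_{s-1}$ coincides with the maximal $(s-1)$-step nilfactor $H/(H_s\Delta)$; equivalently, $L^2(\mathcal{Z}_{s-1})$ consists of the $H_s$-invariant functions of $L^2(W,m_W)$. By the characterization recalled in Section~\ref{SS:GHKseminorms}, the identity $\nnorm{\Phi}_{s,T_h}=0$ thus reduces to verifying that $\Phi$ is orthogonal to every $H_s$-invariant $F\in L^2(W,m_W)$.

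Finally, this orthogonality follows directly from the vertical character property. Since $s\geq 2$, the group $H_s$ is connected and central in $H$, so $K_s:=H_s/(H_s\cap\Delta)$ is a torus acting freely by left translation on $W$, and $m_W$ disintegrates over $W/H_s$ with the normalized Haar measure of $K_s$ on each fiber. On any fiber $\Phi$ transforms by the non-trivial frequency $\chi$ while $F$ is constant, so
$$\int_W\Phi\cdot\overline{F}\,dm_W=\int_{W/H_s}\Phi(x_0)\,\overline{F(x_0)}\Big(\int_{K_s}\chi(u)\,du\Big)\,d[x_0]=0,$$
because $\int_{K_s}\chi=0$. The main delicacy in this outline is the identification of $\mathcal{Z}_{s-1}$ with the $H_s$-invariant $\sigma$-algebra on the possibly disconnected nilmanifold $W$; this is legitimate precisely because $H_s$ is connected and normal for $s\geq 2$, as noted in Section~\ref{SS:nilmanifolds}.
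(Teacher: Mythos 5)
Your proposal is correct and follows essentially the same route as the paper's proof: identify $\norm{\phi}_{U^s(\mathbf{I})}$ with the Host--Kra seminorm $\nnorm{\Phi}_s$ on the ergodic nilsystem $(W,m_W,T_h)$, invoke that $L^2(\mathcal{Z}_{s-1}(W))$ coincides with the $H_s$-invariant functions (the paper cites Host--Kra, Ziegler, and Leibman for this), and conclude from the orthogonality of a non-trivial vertical nilcharacter to $H_s$-invariant functions. You merely flesh out two steps the paper treats by reference — the seminorm identification via unique ergodicity, and the final orthogonality via disintegration of $m_W$ over $W/H_s$ — but the argument is the same.
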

\begin{proof}
	As remarked in Section~\ref{SS:equidistribution}, we have  $\norm{\phi}_{U^s(\mathbf{I})}=\nnorm{\Phi}_{s}$, where the seminorm is computed
	for the system induced on $W$ with the normalized  Haar measure $m_W$ by the ergodic nilrotation by $h\in H$.
	Let $\mathcal{Z}_{s-1}(W)$ be defined as in Section~\ref{SS:GHKseminorms}.
	
	It is implicit in \cite[Theorem~13.1]{HK05a} and also follows by combining   \cite[Lemma~4.5]{Zi07} and \cite{Lei05}, that $L^2(\mathcal{Z}_{s-1}(W))$  consists
	exactly of those functions in $L^2(m_W)$ that are $H_s$-invariant (\cite{Lei05} shows that  the factors $\mathcal{Z}_s$ and  $Y_s$  defined in \cite{HK05a} and  \cite{Zi07} respectively are the same).
	Since   $\Phi$ is a non-trivial  vertical nilcharacter of $W$, it is orthogonal to any $H_s$-invariant function in $L^2(m_W)$, hence  $\Phi$ is orthogonal to any function  in  $L^2(\mathcal{Z}_{s-1}(W))$. As remarked in Section~\ref{SS:GHKseminorms}, this implies that $\nnorm{\Phi}_s=0$ and completes the proof.
\end{proof}

\begin{proposition}\label{C:unfm}
	For $s\geq 2$ let  $W=H/\Delta$ be an $s$-step nilmanifold and $h\in H$
	be an ergodic nilrotation.  Furthermore, let $\Phi$ be a non-trivial vertical  nilcharacter of $W$.
	Then for every   sequence of intervals $\mathbf{I}=(I_N)_{N\in\mathbb{N}}$
	with $|I_N|\to \infty$   and every
	$(s-1)$-step nilmanifold $Y$ we have
	$$
	\lim_{N\to\infty}\sup_{\psi\in \Psi_Y}
	|\mathbb{E}_{n\in I_N}\, \Phi(h^n\cdot e_W)\, \psi(n)|=0.
	$$	
\end{proposition}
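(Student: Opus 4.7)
The plan is to chain three ingredients: Lemma~\ref{L:semi} (providing $U^s$-uniformity for the sequence $\phi(n):=\Phi(h^n\cdot e_W)$), Lemma~\ref{L:Direct} (converting such uniformity into asymptotic orthogonality with $(s-1)$-step nilsequences over short intervals), and a standard shift-averaging step that upgrades a short-interval estimate to the desired long-interval one.

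First I would verify that Lemma~\ref{L:Direct} applies to $a=\phi$. Being the orbit sequence of an ergodic nilrotation on the compact nilmanifold $W$, unique ergodicity ensures that $\phi$ admits correlations for Ces\`aro averages on every F\o lner sequence of intervals $\mathbf{I}=(I_N)$ with $|I_N|\to\infty$. Lemma~\ref{L:semi} then yields $\norm{\phi}_{U^s(\mathbf{I})}=0$. Applying Lemma~\ref{L:Direct} to $a=\phi$, with its auxiliary $(s-1)$-step nilsequence specialized to the constant~$1$, one obtains
\begin{equation*}
\lim_{M\to\infty}\limsup_{N\to\infty}\mathbb{E}_{n\in I_N}\sup_{\psi\in \Psi_Y}\bigl|\mathbb{E}_{m\in[n,n+M]}\,\phi(m)\,\psi(m)\bigr|=0.
\end{equation*}

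To pass from this short-interval bound to the statement of Proposition~\ref{C:unfm}, I would invoke the elementary boundary identity $\mathbb{E}_{m\in I_N} f(m)=\mathbb{E}_{n\in I_N}\mathbb{E}_{m\in[n,n+M]} f(m)+O(M\norm{f}_\infty/|I_N|)$, valid for every bounded $f$. Specializing to $f=\phi\cdot\psi$---which is uniformly bounded in $\psi\in\Psi_Y$ since $\norm{\psi}_\infty\leq\norm{\psi}_{\lip(Y)}\leq 1$---passing to absolute values, and then taking the supremum over $\psi\in\Psi_Y$, one obtains
\begin{equation*}
\sup_{\psi\in\Psi_Y}\bigl|\mathbb{E}_{m\in I_N}\,\phi(m)\,\psi(m)\bigr|\leq \mathbb{E}_{n\in I_N}\sup_{\psi\in\Psi_Y}\bigl|\mathbb{E}_{m\in[n,n+M]}\,\phi(m)\,\psi(m)\bigr|+O\bigl(M/|I_N|\bigr).
\end{equation*}
Taking $\limsup_{N\to\infty}$ with $M$ fixed annihilates the error term; then letting $M\to\infty$ drives the main term to~$0$ by the estimate just derived. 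Since the left-hand side does not depend on~$M$, it must vanish, giving Proposition~\ref{C:unfm}.

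The argument is essentially a chaining of prior results with a short bookkeeping step, so no part presents a substantial obstacle; the only subtlety is confirming that $\phi$ admits correlations along arbitrary F\o lner sequences of intervals, which follows from unique ergodicity of ergodic nilrotations on compact nilmanifolds.
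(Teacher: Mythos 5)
Your proof is correct, and it reaches the destination by a slightly different route than the paper. Both arguments begin identically, invoking Lemma~\ref{L:semi} to obtain $\norm{\phi}_{U^s(\mathbf{I})}=0$ for the orbit sequence $\phi(n)=\Phi(h^n\cdot e_W)$, and your observation that unique ergodicity of the ergodic nilrotation supplies the needed ``admits correlations'' hypothesis is right. Where you diverge is in the second half: the paper invokes Lemma~\ref{L:nilkey} directly to replace $\psi\in\Psi_Y$ by multiple-correlation sequences, and then appeals to the long-interval estimate
$$
\limsup_{N\to\infty}\Big|\mathbb{E}_{n\in I_N}\, a(n)\int F_{0,N}\cdot T_N^{k_1 n}F_{1,N}\cdots T_N^{k_{s-1} n}F_{s-1,N}\,d\mu_N\Big|\leq 4\norm{a}_{U^s(\mathbf{I})},
$$
proved by a van der Corput induction (citing \cite[Section~2.3.1]{Fr15}). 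You instead feed $\phi$ into Lemma~\ref{L:Direct} (with its auxiliary nilsequence set to $1$) to get the short-interval double-average estimate, and then lift it to the single long-interval average by the shift-averaging identity with an $O(M/|I_N|)$ error. Since Lemma~\ref{L:Direct} is itself proved via Lemma~\ref{L:nilkey} and a van der Corput induction on a closely related quantity, the two paths share essentially the same core technology; the trade-off is that your version reuses an already-proved lemma at the cost of an extra (but harmless and standard) bookkeeping step passing between $\mathbb{E}_{n\in I_N}\mathbb{E}_{m\in[n,n+M]}$ and $\mathbb{E}_{m\in I_N}$, whereas the paper works with the long-interval estimate from the start and so has no lifting step. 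Both are valid; yours is arguably more economical as a chaining of stated results.
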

\begin{proof}
	Let $\phi(n):=\Phi(h^n\cdot e_W), n\in\mathbb{N}$.
	By Lemma~\ref{L:semi} we  have that $ \norm{\phi}_{U^s(\mathbf{I})}=0$.
	
	It follows from  Lemma~\ref{L:nilkey} that it suffices to establish the following:
	Let  $s\in \mathbb{N}$,   $\mathbf{I}=(I_N)_{N\in\mathbb{N}}$ be a sequence of intervals with $|I_N|\to \infty$,  and $a\in \ell^\infty(\mathbb{N})$ be a sequence that admits correlations  for Ces\`aro averages  on $\mathbf{I}$. Furthermore, for $N\in\mathbb{N}$, let $(X_N,\mathcal{X}_N,\mu_N,T_N)$ be a system,  $F_{0,N},\ldots, F_{s-1,N}\in L^\infty(\mu_N)$ be functions
	bounded by $1$, and let $k_1,\ldots, k_{s-1}\in \mathbb{Z}$.
	Then we have
	$$
	\limsup_{N\to\infty}
	\Big|\mathbb{E}_{n\in I_N}\, a(n)\,\int  F_{0,N} \cdot T_N^{k_{1} n}F_{1,N} \cdot  \ldots \cdot  T_N^{k_{s-1} n}F_{s-1,N}\ d\mu_N \Big|\leq 4 \norm{a}_{U^s(\mathbf{I})}.
	$$
	
	This estimate can be proved by induction on $s$ in a rather standard way using the van der Corput lemma for inner product spaces, the details are given in \cite[Section~2.3.1]{Fr15}.
\end{proof}
We are now ready to prove Proposition~\ref{P:orthogonality} which is the last step in the proof of Theorem~\ref{T:uniformity}.
\begin{proof}[Proof of Proposition~\ref{P:orthogonality}] We argue by contradiction. Suppose that for some  $s\geq 2$ there exist an
	$s$-step nilmanifold  $X=G/\Gamma$, an ergodic nilrotation   $b\in G$, non-trivial vertical nilcharacters $\Phi, \Phi'$ of $X$  with the  same frequency,
	$p,p'\in \mathbb{N}$ with $p\neq p'$, a sequence of intervals  $(I_N)_{N\in\mathbb{N}}$ with $|I_N|\to \infty$, and an $(s-1)$-step nilmanifold $Y$, such that
	\begin{equation}\label{E:tobecontr}
		\limsup_{N\to \infty} \sup_{ \psi\in \Psi_Y}|\mathbb{E}_{n\in I_N}\,  \Phi(b^{pn}\cdot e_X) \,
		\overline{\Phi'(b^{p'n}\cdot e_X)}\, \psi(n)|>0.
	\end{equation}
	
	We first reduce matters to the case where the nilmanifold $X$ is connected.  As remarked in Section~\ref{SS:equidistribution}, there exists $r\in \mathbb{N}$ such that  $b^r$ acts ergodically on the connected component $X_0$ of the nilmanifold $X$. Then for some $j\in \{0,\ldots, r-1\}$ we have
	$$
	\limsup_{N\to \infty} \sup_{ \psi\in \Psi_Y}|\mathbb{E}_{n\in I_N}\,  \Phi(b^{p(rn+j)}\cdot e_X) \,
	\overline{\Phi'(b^{p'(rn+j)}\cdot e_X)}\, \psi(rn+j)|>0.
	$$
	Since $(\psi(rn+j))_{n\in\mathbb{N}}\in \Psi_Y$ for every $r,s\in \mathbb{N}$, we deduce that
	$$
	\limsup_{N\to \infty} \sup_{ \psi\in \Psi_Y}|\mathbb{E}_{n\in I_N}\,  \tilde{\Phi}(\tilde{b}^{pn}\cdot e_X) \,
	\overline{\tilde{\Phi}'(\tilde{b}^{p'n}\cdot e_X)}\, \psi(n)|>0,
	$$
	where $\tilde{b}:=b^r$ is an ergodic nilrotation of  $X_0$, and the functions $\tilde{\Phi}, \tilde{\Phi}'\colon X_0\to \mathbb{C}$,  defined by  $\tilde{\Phi}(x):=\Phi(b^{pj}x)$,
	$\tilde{\Phi}'(x):=\Phi(b^{p'j}x)$, $x\in X_0$, are non-trivial vertical  nilcharacters of $X_0$ (see the discussion in Section~\ref{SS:nilcharacters})  with the same frequency.
	Hence, we can assume that the nilmanifold $X$ is connected.
	
	Let $h:=(b^p,b'^{p'})$.
	By the discussion in Section~\ref{SS:equidistribution} and Proposition~\ref{P:keyinv}, the element $h$ acts ergodically on a nilmanifold $W$ that can be represented as $W=H/\Delta$ where $H$ is a subgroup of $G\times G$  such that  $h\in H$ and
	$(u^{p^s}, u^{p'^s})\in H_s$ for every   $u\in G_s$.

	We will show that   the restriction of the function
	$\Phi\otimes \overline{\Phi'}$ on $W$  is a non-trivial vertical nilcharacter of $W$.
	To this end, we use our hypothesis that
	$$
	\Phi(u\cdot x)=\chi(u)\, \Phi(x)\ \text{ and }\ \Phi'(u\cdot x)=\chi(u)\, \Phi'(x)
	\ \text{ for } \ u\in G_s\ \text{ and }\ x\in X,
	$$
	where  $\chi$ is a
	non-trivial element of the dual of $G_s$ that is   $(G_s\cap \Gamma)$-invariant. Hence,
	$$
	(\Phi\otimes \overline{\Phi'})\big((u,u')\cdot (x,x')\big)=\chi(u)\,  \overline{\chi}(u')\, (\Phi\otimes \overline{\Phi'})(x,x'), \quad \text{ for }\  u,u'\in G_s \ \text{ and }\  x,x'\in X.
	$$
	Since  $H_s\subset G_s\times G_s$, it follows from this identity
	that $\Phi\otimes \overline{\Phi'}$ is a vertical nilcharacter of
	$W=H/\Delta$. It remains to show that $\chi\cdot \overline{\chi}$ is non-trivial on $H_s$.  Arguing by contradiction, suppose it is.  Since
	$(u^{p^s}, u^{p'^s})\in H_s$ for every   $u\in G_s$,
	we  get
	$$
	\chi(u^{p^s-p'^s})=\chi(u^{p^s}) \, \overline{\chi(u^{p'^s})}=1 \quad \text{for every } u\in G_s.
	$$
	Since $G_s$ is connected for $s\geq2$ and $p\neq p'$, the map $u\mapsto u^{p^s-p'^s}$ is onto $G_s$, hence $\chi$ is the trivial character on $G_s$, a contradiction.

	Combining the above, we get that  Proposition~\ref{C:unfm}
	applies and gives 	$$
	\lim_{N\to\infty}\sup_{\psi\in \Psi_Y}
	|\mathbb{E}_{n\in I_N}\,(\Phi\otimes \overline{\Phi'})(h^n\cdot e_W)\, \psi(n)|=0.
	$$	
	This contradicts \eqref{E:tobecontr} and completes the proof
	of Proposition~\ref{P:orthogonality}.
\end{proof}




\bibliographystyle{amsplain}

\begin{thebibliography}{99}
	
	\bibitem{AKLR14} H.~el Abdalaoui, J.~Ku{\l}aga-Przymus, M. Lema\'nczyk, T.~de la Rue.
	\newblock The Chowla and the Sarnak conjectures from ergodic theory point of view.
	\newblock {\em Discrete Contin. Dyn. Syst.} {\bf 37} (2017), no. 6, 2899--2944.
		
	\bibitem{ALR15} H.~el Abdalaoui, M.~Lema\'nczyk,  T.~Rue.
	\newblock Automorphisms with quasi-discrete spectrum, multiplicative functions and average orthogonality along short intervals.
    \newblock 	{\em Int. Math. Res. Not.}, Volume 2017, Issue 14, (2017),   4350--4368
	
	\bibitem{BHK05} V.~Bergelson, B.~Host, B.~Kra, with an appendix by I. Ruzsa.
	\newblock Multiple recurrence and nilsequences.
	\newblock {\em Inventiones Math.} {\bf 160} (2005), no. 2, 261--303.
	
	\bibitem{BSZ12} J.~Bourgain, P.~Sarnak, T.~Ziegler.
	\newblock Disjointness of M\"obius from horocycle flows.
	\newblock From Fourier analysis and number theory to Radon transforms and geometry. {\em Dev. Math.} {\bf 28}, Springer, New York,	(2013),	67--83.
	
	\bibitem{Ch65} S.~Chowla.
	\newblock The Riemann Hypothesis and Hilberts Tenth Problem.
	\newblock  {\em Mathematics and Its	Applications} {\bf 4},  Gordon and Breach Science Publishers, New York, 1965.
	
	\bibitem{CF11} Q.~Chu, N.~Franzikinakis.
	\newblock Pointwise convergence for cubic and polynomial ergodic averages of non-commuting transformations.
	\newblock {\em Ergodic Theory Dynam. Systems}  {\bf 32}  (2012), 877--897.
	
	\bibitem{CFH11} Q.~Chu, N.~Franzikinakis, B.~Host.
	\newblock Ergodic averages of commuting transformations with distinct degree polynomial iterates.
	\newblock {\em Proc. Lond. Math. Soc.} {\bf 102} (2011), 801--842.
	
	\bibitem{D74} H.~Daboussi.
	\newblock Fonctions multiplicatives presque p\'eriodiques B.
	\newblock D'apr\`es un travail commun avec H.~Delange. Journ\'ees Arithm\'etiques de Bordeaux (Conf., Univ. Bordeaux, Bordeaux, 1974), pp. 321--324. {\em Asterisque} {\bf  24-25} (1975), 321--324.
	
	\bibitem{El90} P.~Elliott.
	\newblock Multiplicative functions $|g|\leq 1$ and their convolutions: An overview.
	\newblock S\'eminaire de Th\'eorie des Nombres, Paris 1987-88.
	{\em Progress in Mathematics} {\bf  81} (1990), 63--75.
	
	\bibitem{El94} P.~Elliott.
	\newblock On the correlation of multiplicative and the sum of additive arithmetic functions.
	\newblock  {\em Mem. Amer. Math. Soc. } {\bf 112} (1994), no. 538, viii+88pp.
	
	\bibitem{FFK16} 	L.~Flaminio, K.~Fraczek, J.~Ku{\l}aga-Przymus, M. Lema\'nczyk.	
	\newblock Approximate orthogonality of powers for ergodic affine unipotent diffeomorphisms on nilmanifolds.
	\newblock (2016), \texttt{arXiv:1609.00699}.
	
	\bibitem{Fr15}
	N.~Frantzikinakis.
	\newblock Multiple correlation sequences and nilsequences.
	\newblock {\em Invent. Math.} {\bf 202} (2015), no. 2, 875--892.
	
	\bibitem{F16} N.~Frantzikinakis.
	\newblock An averaged Chowla and Elliott  conjecture along  independent polynomials.
	\newblock To appear in {\em Int. Math. Res. Not.}  \texttt{arXiv:1606.08420}.
	
	\bibitem{FH15a}
	N.~Frantzikinakis,   B.~Host.
	\newblock Higher order Fourier analysis of multiplicative functions and applications.
	\newblock {\em J. Amer. Math. Soc.} {\bf 30}  (2017), 67--157.
	
	\bibitem{FH17}
	N.~Frantzikinakis,   B.~Host.
	\newblock The logarithmic Sarnak conjecture for ergodic weights.
	\newblock (2017), 	\texttt{arXiv:1708.00677}.
	
	\bibitem{FHK2} N.~Frantzikinakis, B.~Host, B.~Kra.
	\newblock The polynomial multidimensional Szemer\'edi theorem along shifted primes.
	\newblock {\em Isr. J. Math.} {\bf 194} (2013), 331--348.
	
	\bibitem{Fu77} H.~Furstenberg.
    \newblock 	Ergodic behavior of diagonal measures and a theorem of Szemer\'edi on arithmetic progressions.
	\newblock {\em J. Analyse Math.} \textbf{31} (1977), 204--256.
	
	\bibitem{Fu81a} H.~Furstenberg.
	\newblock Recurrence in ergodic theory and combinatorial number theory.
	\newblock Princeton University Press, Princeton, 1981.
		
		\bibitem{GKL17} A.~Gomilko, D.~Kwietniak, M.~Lema\'nczyk.
	\newblock  Sarnak's conjecture implies the Chowla conjecture along a subsequence.
	\newblock	 (2017), 	\texttt{arXiv:1710.07049}
	 
	\bibitem{Gow01} W.~Gowers.
	\newblock  A new proof of Szemer\'edi's theorem.
	\newblock {\em Geom. Funct. Anal.} {\bf 11} (2001), 465--588.
	
	\bibitem{GS16} A.~Granville,  K.~Soundararajan.
	\newblock Multiplicative Number Theory: The pretentious approach.
	\newblock Book manuscript  in preparation.
	
	\bibitem{GT10} B.~Green, T.~Tao.
	\newblock An arithmetic regularity lemma, associated counting lemma, and applications.
	\newblock An irregular mind,  {\em Bolyai, Soc. Math. Stud.}
	{\bf 21},  J\'anos Bolyai Math. Soc., Budapest, (2010), 261--334.
	
	\bibitem{GT10b} B.~Green,  T.~Tao.
	\newblock Linear equations in the primes.
	\newblock {\em Ann. of Math.} {\bf 171} (2010), 1753--1850.
	
	\bibitem{GT12}
	B.~Green, T.~Tao.
	\newblock The quantitative behaviour of polynomial orbits on nilmanifolds.
	\newblock {\em Ann. of Math.}  {\bf 175}  (2012),  465--540.
	
	\bibitem{GT12b}
	B.~Green,   T.~Tao.
	\newblock The M\"obius function is strongly orthogonal to nilsequences.
	\newblock {\em Ann. of Math.} {\bf 175} (2012), no. 2, 541--566.
	
	\bibitem{GTZ12c} B.~Green, T.~Tao, T.~Ziegler.
	\newblock An inverse theorem for the Gowers $U^{s+1}[N]$-norm.
	\newblock {\em Ann. of Math.} {\bf 176} (2012), no. 2, 1231--1372.
	
	\bibitem{Hal68} G.~Hal\'asz.
	\newblock \"{U}ber die Mittelwerte multiplikativer zahlentheoretischer Funktionen.
	\newblock {\em  Acta Math. Acad. Sci. Hung.}  {\bf 19}  (1968), 365--403.
	
	\bibitem{HK05a} B.~Host,  B.~Kra.
	\newblock Non-conventional ergodic averages and nilmanifolds.
	\newblock 	{\em Ann. of Math.}  \textbf{161}  (2005), 397--488.
	
	\bibitem{HK09} B.~Host, B.~Kra.
	\newblock Uniformity seminorms on $l^{\infty}$ and applications.
	\newblock {\em J. Analyse Math.} \textbf{108} (2009), 219--276.
	
	\bibitem{K86} I.~K\'atai.
	\newblock A remark on a theorem of H. Daboussi.
	\newblock  {\em Acta Math. Hungar.} {\bf 47} (1986), 223--225.
	
	\bibitem{Lei05} A.~Leibman.  	
	\newblock Host-Kra and Ziegler factors and convergence of multiple averages.
	\newblock  {\em Handbook of Dynamical Systems, vol. 1B}, B. Hasselblatt and A. Katok, eds.,  Elsevier (2005), 841--853
	
	\bibitem{Lei05a} A.~Leibman.
	\newblock Pointwise convergence of ergodic averages for polynomial	sequences of rotations of a nilmanifold.
	\newblock {\em Ergodic Theory Dynam. Systems} {\bf 25}  (2005), no. 1,   201--213.	
	
	\bibitem{Les91} E.~Lesigne.
	\newblock Sur une nil-vari\'et\'e, les partiesminimales associ\'ees \`a une translation sont uniquement	ergodiques.
	\newblock {\it Ergodic Theory Dynam. Systems} \textbf{11} (1991), no. 2,   379--391.
	
	\bibitem{MR15} K.~Matom\"aki, M.~Radziwi{\l}{\l}.
	\newblock Multiplicative functions in short intervals.
	\newblock  {\em Ann. of Math.}   {\bf 183} (2016), 1015--1056.
	
	
	\bibitem{MRT15}	K.~Matom\"aki, M.~Radziwi{\l}{\l},  T.~Tao.
	\newblock An averaged form of Chowla's conjecture.
	\newblock   {\em Algebra \& Number Theory} {\bf 9} (2015), 2167--2196.
	
	\bibitem{MRT15b} K.~Matom\"aki, M.~Radziwi{\l}{\l}, T.~Tao.
	\newblock Sign patterns of the Liouville and M\"obius functions.
	\newblock  {\em Forum Math. Sigma} {\bf 4} (2016).
	
	\bibitem{MV77} H.~Montgomery, R.~Vaughan.
	\newblock  Exponential sums with multiplicative coefficients.
	\newblock {\em Invent. Math.} {\bf 43} (1977), no. 1, 69--82.
	
	\bibitem{Sa10} P.~Sarnak.
	\newblock Three lectures on the M\"obius function randomness and dynamics.
	\newblock Manuscript (2010). \texttt{http://publications.ias.edu/sarnak/paper/506}.
	
	\bibitem{Sa12} P.~Sarnak.
	\newblock  M\"obius randomness and dynamics.
	\newblock {\em  Not. S. Afr. Math. Soc.} {\bf 43} (2012), no. 2, 89--97.
	
	\bibitem{Tao12}	T.~Tao.
	\newblock Higher order Fourier analysis.
	\newblock  Graduate studies in mathematics	{\bf 142}, American Mathematical Society, (2012).
	
	\bibitem{Tao15} T.~Tao.
	\newblock The logarithmically averaged Chowla and Elliott conjectures for two-point correlations.
	\newblock  {\em Forum Math. Pi}  {\bf 4} (2016).
	
	\bibitem{T16} T.~Tao.
    \newblock Equivalence of the logarithmically averaged Chowla and Sarnak conjectures.
    \newblock In: C.~Elsholtz, P.~Grabner,  Number Theory - Diophantine Problems, Uniform Distribution and Applications.
       Springer, Cham, (2017), 391--421.
	
	\bibitem{TT17} T.~Tao, J.~Ter\"av\"ainen.
    \newblock 	The structure of logarithmically averaged correlations of multiplicative functions, with applications to the Chowla and Elliott conjectures.  	
    \newblock (2017), 	\texttt{arXiv:1708.02610v1}.
	
	
	
	\bibitem{Zi07} T.~Ziegler.
    \newblock 	Universal characteristic factors and Furstenberg averages.
    \newblock {\em J. 	Amer. Math. Soc.} \textbf{20} (2007), 53--97.
	
	
	
 		
	
	
	

\end{thebibliography}



\begin{dajauthors}
\begin{authorinfo}[Nikos]
  Nikos Frantzikinakis\\
 Department of Mathematics and Applied Mathematics\\
 University of Crete\\
   Voutes University Campus,
 Heraklion 70013, Greece\\
    frantzikinakis \imageat{} gmail\imagedot{}com  \\
  \url{http://users.math.uoc.gr/~nikosf/}
\end{authorinfo}
\end{dajauthors}

\end{document}